\numberwithin{equation}{section}
\numberwithin{figure}{section}
\theoremstyle{plain}
\newtheorem{thm}{Theorem}[section]
\theoremstyle{definition}
\newtheorem*{claim}{Claim}
\theoremstyle{plain}
\newtheorem{prop}[thm]{Proposition}
\theoremstyle{remark}
\newtheorem{rem}[thm]{Remark}
\theoremstyle{plain}
\newtheorem{cor}[thm]{Corollary}
\theoremstyle{plain}
\newtheorem{lem}[thm]{Lemma}
\theoremstyle{remark}
\newtheorem*{rem*}{Remark}
\newcommand{\norm}[1]{\left\Vert#1\right\Vert}
\newcommand{\abs}[1]{\left\vert#1\right\vert}
\newcommand{\set}[1]{\left\{#1\right\}}
\newcommand{\pset}[1]{\left(#1\right)}
\newcommand{\Real}{\mathbb{R}}
\newcommand{\Zeit}{\mathbb Z}
\newcommand{\Cay}{\mathbb O}
\newcommand{\Cpx}{\mathbb{C}}
\newcommand{\eps}{\varepsilon}
\newcommand{\sph}{\mathbb{S}}
\newcommand{\To}{\rightarrow}
\newcommand{\bk}[1]{\langle #1 \rangle}
\newcommand{\RP}{\mathbb{R} \mathrm{P}}
\newcommand{\calB}{\mathcal B}
\newcommand{\Bm}{B_{-}}
\newcommand{\Bp}{B_{+}}
\newcommand{\Tr}{\mathrm{tr}}
\newcommand{\Span}{\mathrm{span}}
\newcommand{\diag}{\mathrm{diag}}
\newcommand{\Ad}{\mathrm{Ad}}
\newcommand{\Id}{\mathrm{Id}}
\newcommand{\sfGt}{\mathsf G_2}
\newcommand{\SO}{\mathsf{SO}}
\newcommand{\sfO}{\mathsf{O}}
\newcommand{\SU}{\mathsf{SU}}
\newcommand{\U}{\mathsf{U}}
\newcommand{\Km}{\mathsf{K}^{-}}
\newcommand{\Kp}{\mathsf{K}^{+}}
\newcommand{\sfH}{\mathsf{H}}
\newcommand{\sfG}{\mathsf{G}}
\newcommand{\barKm}{\bar{\mathsf{K}}^{-}}
\newcommand{\barKp}{\bar{\mathsf{K}}^{+}}
\newcommand{\frakg}{\mathfrak{g}}
\newcommand{\frakh}{\mathfrak{h}}
\newcommand{\frakp}{\mathfrak{p}}
\newcommand{\frakk}{\mathfrak{k}}
\newcommand{\frakm}{\mathfrak{m}}
\newcommand{\so}{\mathfrak{so}}
\newcommand{\su}{\mathfrak{su}}
\title[Fake $\Real\mathrm{P}^{13}$ with cohomogeneity one actions]{Fake $13$-projective spaces \\ with cohomogeneity one actions}
\author{Chenxu He}
\address{Department of Mathematics, University of California, Riverside, CA 92521}
\email{chenxuhe@math.ucr.edu}
\urladdr{https://sites.google.com/site/hechenxu/home}
\author{Priyanka Rajan}
\address{Department of Mathematics, University of California, Riverside, CA 92521}
\email{rajan@math.ucr.edu}
\urladdr{https://sites.google.com/site/priyankarrajangeometry/home}
\subjclass[2000]{53C20, 53C30}
\begin{document}
\maketitle

\begin{abstract}
We show that some embedded standard $13$-spheres in Shimada's exotic $15$-spheres have $\Zeit_2$ quotient spaces, $P^{13}$s, that are fake real $13$-dimensional projective spaces, i.e., they are homotopy equivalent, but not diffeomorphic to the standard $\RP^{13}$. As observed by F. Wilhelm and the second named author in \cite{RajanWilhelm}, the Davis $\SO(2)\times \sfGt$ actions on Shimada's exotic $15$-spheres descend to the cohomogeneity one actions on the $P^{13}$s. We prove that the $P^{13}$s are diffeomorphic to well-known $\Zeit_2$ quotients of certain Brieskorn varieties, and that the Davis $\SO(2)\times \sfGt$ actions on the $P^{13}$s are equivariantly diffeomorphic to well-known actions on these Brieskorn quotients. The $P^{13}$s are octonionic analogues of the Hirsch-Milnor fake $5$-dimensional projective spaces, $P^{5}$s. K. Grove and W. Ziller showed that the $P^{5}$s admit metrics of non-negative curvature that are invariant with respect to the Davis $\SO(2)\times \SO(3)$-cohomogeneity one actions. In contrast, we show that the $P^{13}$s do not support $\SO(2)\times \sfGt$-invariant metrics with non-negative sectional curvature.
\end{abstract}

\tableofcontents

\newpage

\section{Introduction}
A \emph{fake} real projective space is a manifold homotopy equivalent, but not diffeomorphic, to the standard real projective space. Equivalently, it is the orbit space of a free \emph{exotic} involution on a sphere. A free involution is called exotic, if it is not conjugate by a diffeomorphism to the standard antipodal map on the sphere. The first examples of such exotic involutions were constructed by Hirsch and Milnor on $\sph^5$ and $\sph^6$, see \cite{HirschMilnor}. They are restrictions of certain free involutions on the images of embedded standard $5$- and $6$-spheres in Milnor's exotic spheres  \cite{Milnor7sphere}. Thus the quotient spaces of such embedded $\sph^5$ and $\sph^6$ are homotopy equivalent, but not diffeomorphic, to the standard real projective spaces. 

The analogous exotic $15$-spheres $\Sigma^{15}$s were constructed by N. Shimada in \cite{Shimada} as certain $7$-sphere bundles over the $8$-sphere. The antipodal map on the $7$-sphere fiber defines a natural involution $T$ on the $\Sigma^{15}$s. In \cite{RajanWilhelm}, F. Wilhelm and the second named author observed that the images of certain embedded standard $13$- and $14$-spheres in $\Sigma^{15}$s  are invariant under the involution, and thus the quotient spaces are homotopy equivalent to the standard $13$- and $14$-real projective spaces. Our first main result is the diffeomorphism classification of the quotients. In particular we show the following
\begin{thm}\label{thm:introP13kfake}
The quotient spaces of the embedded $13$-spheres in certain Shimada's spheres $\Sigma^{15}$s are fake real projective spaces, i.e., they are homotopy equivalent, but not diffeomorphic to the standard $13$-projective space. 
\end{thm}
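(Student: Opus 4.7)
The plan is to prove the theorem in three stages: concretize the cover $\sph^{13}\to P^{13}$, identify $P^{13}$ with a specific Brieskorn $\Zeit_2$-quotient, and then invoke established classification results for that quotient. This mirrors the strategy outlined in the abstract, where the $P^{13}$s are shown to be diffeomorphic to $\Zeit_2$-quotients of Brieskorn varieties.

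First, I would make the cover explicit. Using Shimada's description of $\Sigma^{15}$ as an $\sph^7$-bundle over $\sph^8$ with the involution $T$ acting antipodally on each fiber, I would identify the embedded $\sph^{13}$ of \cite{RajanWilhelm} as the preimage over a suitable submanifold of $\sph^8$, and verify that $T$ preserves it and acts freely (the free action being inherited from the fiberwise antipodal map). This yields a smooth double cover $\sph^{13}\to P^{13}$, and since $\pi_1(P^{13})=\Zeit_2$ with universal cover the standard $\sph^{13}$, a standard argument (as used in \cite{RajanWilhelm}) shows $P^{13}\simeq \RP^{13}$.

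Next, I would identify $P^{13}$ with a Brieskorn $\Zeit_2$-quotient. The Davis $\SO(2)\times\sfGt$-action on $\Sigma^{15}$ commutes with $T$ and so descends to a cohomogeneity one action on $P^{13}$. Encoding that action by its group diagram $\sfH\subset\{\Km,\Kp\}\subset\SO(2)\times\sfGt$, I would compute the analogous diagram for the standard $\SO(2)\times\sfGt$-action on a candidate Brieskorn $\Zeit_2$-quotient and match the two diagrams. The slice representations at the two singular orbits then assemble an $\SO(2)\times\sfGt$-equivariant diffeomorphism by gluing the tubular neighborhoods $\Bm$ and $\Bp$. The main obstacle lies here: one has to be precise about which Shimada sphere and which embedded $\sph^{13}$ correspond to which Brieskorn datum, to verify that the slice representations agree, and to check that the clutching data needed to reconstruct $P^{13}$ coincide with those of the Brieskorn quotient.

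Finally, once this identification is established, the non-diffeomorphism $P^{13}\not\cong\RP^{13}$ follows from known properties of the Brieskorn quotient. A smooth invariant detecting the structure set of $\RP^{13}$ (for instance a Browder--Livesay type invariant, or a Kreck--Stolz $s$-invariant computed from a suitable bounding equivariant manifold) can be evaluated on the Brieskorn model and shown to disagree with its value on $\RP^{13}$. Alternatively, assuming $P^{13}\cong\RP^{13}$, one could attempt to lift the diffeomorphism to an equivariant diffeomorphism of $\sph^{13}$ and reassemble $\Sigma^{15}$ from a tubular neighborhood of the embedded $\sph^{13}$ together with its complement, thereby exhibiting $\Sigma^{15}\cong\sph^{15}$ and contradicting the exoticness of the Shimada sphere.
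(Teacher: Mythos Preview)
Your main strategy---match the cohomogeneity one group diagrams of $P^{13}_k$ and of a Brieskorn $\Zeit_2$-quotient $N^{13}_k$, then invoke the known classification of the latter---is exactly what the paper does. The paper computes the isotropy subgroups on both sides (Theorem~\ref{thm:groupsM13d}, Corollary~\ref{cor:groupsN13d}, Theorem~\ref{thm:isotropyS13}, Corollary~\ref{cor:isotropyP13}), sees that they coincide, and then cites Atiyah--Bott \cite{AtiyahBott} and Giffen \cite{Giffen} for the fact that $N^{13}_k\not\cong\RP^{13}$ when $k\not\equiv\pm 1\pmod{2^{8}}$. Your reference to Browder--Livesay type invariants is in the right spirit; the specific tool is Atiyah--Bott's Lefschetz fixed-point formula.

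Two points need correction. First, the embedded $\sph^{13}_k$ is \emph{not} the preimage of a submanifold of $\sph^{8}$ under the bundle projection; it is cut out by the vanishing of two Morse-type functions $f_1,f_2$ on $\Sigma^{15}_k$ that mix base and fiber coordinates (equations~(\ref{eqn:S14k}) and~(\ref{eqn:S13k})). Your first stage has to be set up accordingly. Second, and more seriously, your proposed \emph{alternative}---assume $P^{13}_k\cong\RP^{13}$, lift to an equivariant diffeomorphism of $\sph^{13}$, and reassemble $\Sigma^{15}_k$ to contradict its exoticness---is precisely the Hirsch--Milnor argument, and the paper explicitly remarks (immediately after Theorem~\ref{thm:introP13kfake}) that it \emph{fails} here: extending a diffeomorphism of $\sph^{13}$ across a bounding $14$-disk is obstructed by $\Theta_{14}\ne 0$, so one cannot conclude $\sph^{14}_k\cong\sph^{14}$ equivariantly, let alone $\Sigma^{15}_k\cong\sph^{15}$. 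Drop that alternative; the Brieskorn identification together with Atiyah--Bott is the argument that actually goes through.
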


\begin{rem}
(a) In \cite{RajanWilhelm}, they showed that the quotients of the embedded $14$-spheres in some $\Sigma^{15}$s are not diffeomorphic to the standard $\RP^{14}$ following the Hirsch-Milnor argument.

(b) They also observed that the Hirsch-Milnor's argument breaks down in the case of the embedded $13$-spheres as there is an exotic $14$-sphere in contrast to the $6$-sphere.
\end{rem}

Our proof of diffeomorphism classification is through the study of the so called Davis action of $\sfG = \SO(2) \times \sfGt$ on Shimada's exotic $15$-spheres, where $\sfGt$ is the simple exceptional Lie group as the automorphism group of the octonions $\Cay$. For each odd integer $k$, denote $\Sigma_k^{15}$ the total space of the $7$-sphere bundle over the $8$-sphere, with the Euler class $[\sph^8]$ and the second Pontrjagin class $6 k [\sph^8]$ where $[\sph^8]$ is the standard generator of the cohomology group $H^8(\sph^8)$. Shimada showed that each $\Sigma^{15}_k$ is homeomorphic to the standard $15$-sphere, but not diffeomorphic if $k^2 \not\equiv 1 \mod 127$, see \cite{Shimada}. In \cite{Davis}(or see Section 2.1), using the octonion algebra, M. Davis introduced the actions of $\sfG $ on $\Sigma_k^{15}$s such that $\sfGt$ acts diagonally on the $7$-sphere fiber and the $8$-sphere base, whereas $\SO(2)$ acts via M\"obius transformation. It is observed in \cite{RajanWilhelm}, that the Davis action on $\Sigma^{15}_k$ leaves the image $\sph^{13}_k$ of the embedded $13$-sphere invariant and commutes with the involution $T$. Thus the restricted action on $\sph^{13}_k$ descends to the quotient space $P^{13}_k = \sph^{13}_k/T$. They also observed that the $\sfG$-actions on $\sph^{13}_k$ and $P^{13}_k$ are cohomogeneity one, i.e., the orbit spaces are one dimensional. On the other hand, for the cohomogeneity one actions on the homotopy spheres, aside from linear actions on the standard spheres, there are families of non-linear actions \cite{Straume}. They are examples given by the $2n-1$ dimensional Brieskorn varieties $M_d^{2n-1}$, which are defined by the equations 
\[
z_0^d +z_1^2 + \ldots + z_n^2 = 0 \quad \text{and} \quad \abs{z_0}^2 + \abs{z_1}^2 + \ldots + \abs{z_n}^2 = 1.
\]
The Brieskorn varieties carry cohomogeneity one actions  by $\SO(2) \times\SO(n)$ via 
\[
(e^{i\theta}, A)\left(z_0, z_1, \ldots, z_n\right) = \left(e^{2i \theta} z_0, e^{-i d\theta }A (z_1, \ldots, z_n)^t\right)
\]
with $A\in \SO(n)$. A natural involution, denoted by $I$, is defined by $I(z_0, z_1, \ldots, z_n) = (z_0, -z_1, \ldots, -z_n)$. It is clear that the involution has no fixed point and commutes with the $\SO(2) \times \SO(n)$-action; and thus the quotient space $N_d^{2n-1} = M_d^{2n-1}/I$ admits a cohomogeneity one action by $\SO(2) \times \SO(n)$. Note that when $n= 7$, the actions on $M_d^{13}$ and $N_d^{13}$ restricted to the group $\sfG = \SO(2) \times \sfGt$ are also cohomogeneity one. We have the following
\begin{thm}\label{thm:introBrieskornvarieties}
For each odd integer $k$, the $\sfG$-manifolds: the $13$-sphere $\sph_k^{13}$ and the Brieskorn variety $M_k^{13}$, with $\sfG = \SO(2) \times \sfGt$ are equivariantly diffeomorphic, and so are the quotient spaces $P_k^{13} = \sph_k^{13}/T$ and $N_k^{13}= M_k^{13}/ I$.
\end{thm}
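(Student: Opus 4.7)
My plan is to apply the standard equivariant classification theorem for cohomogeneity one manifolds: two closed cohomogeneity one $\sfG$-manifolds with orbit space an interval are equivariantly diffeomorphic if and only if their group diagrams $H \subset \{K^-, K^+\} \subset \sfG$ agree up to an automorphism of $\sfG$ and conjugation. Theorem \ref{thm:introBrieskornvarieties} thus reduces to computing the group diagrams of the restricted Davis action on $\sph_k^{13}$ and of the restricted standard action on $M_k^{13}$, matching them, and then verifying that the involutions $T$ and $I$ descend compatibly.

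For the Brieskorn side I would begin from the classical Hsiang--Hsiang diagram of the full $\SO(2) \times \SO(7)$ action on $M_k^{13}$ with $k$ odd, and intersect $H$, $K^-$, $K^+$ with $\sfG = \SO(2) \times \sfGt$. The inputs are the standard transitivity facts: $\sfGt$ acts on $\sph^6$ with isotropy $\SU(3)$, on the Stiefel manifold $V_2(\Real^7)$ with isotropy $\SU(2)$, and the $\sfGt$-stabilizer of an oriented $2$-plane $V \subset \Real^7$ is a $4$-dimensional group fitting in an extension $1 \to \SU(2) \to \mathrm{Stab}_{\sfGt}(V) \to \SO(V) \to 1$. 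These yield $H = \SU(2)$, $K^- = \SU(2) \cdot \SO(2)_k$ with $\SO(2)_k$ a slope-$k$ diagonal circle in an $\SO(2) \times \SO(2) \subset \sfG$, and $K^+$ a $\Zeit_2$-extension of $\SU(3)$. For the Davis side I would perform the analogous computation from the octonionic description of Shimada's bundle together with the explicit embedding $\sph_k^{13} \hookrightarrow \Sigma_k^{15}$ of \cite{RajanWilhelm}; the two singular orbits sit over the two fixed points $0$ and $\infty$ of the M\"obius $\SO(2)$ on the base $\sph^8$, and the principal isotropy is again the $\sfGt$-stabilizer of a pair of orthonormal imaginary octonions, namely $\SU(2)$.

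The main obstacle is identifying the twist parameter of the $\SO(2)$-factor in $K^-$ on the Davis side with the integer $k$ labeling the Shimada sphere. This is the step where the topological invariants of Shimada's bundle, the Euler class and the second Pontryagin class (each a multiple of the generator of $H^8(\sph^8)$), must translate into the slope of a circle in the cohomogeneity one diagram. Once the two diagrams match, the classification theorem yields an equivariant diffeomorphism $\sph_k^{13} \cong M_k^{13}$. For the quotient statement, $T$ and $I$ are free involutions commuting with $\sfG$, so each upgrades the action to one of $\sfG \times \Zeit_2$, and the induced $\sfG$-action on the quotient has group diagram obtained by $\Zeit_2$-extending each isotropy by the coset of the involution in its normalizer. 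I would verify on each singular orbit that $T$ and $I$ restrict to the antipodal map on the normal sphere slice; this pins down the same $\Zeit_2$-extension on both sides and lets the equivariant diffeomorphism descend to $P_k^{13} \cong N_k^{13}$.
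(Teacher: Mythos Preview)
Your proposal is correct and follows essentially the same route as the paper: compute the group diagrams $\sfH \subset \{\Km,\Kp\} \subset \sfG$ for both the Brieskorn variety $M_k^{13}$ (Theorem~\ref{thm:groupsM13d}) and the Davis sphere $\sph_k^{13}$ (Theorem~\ref{thm:isotropyS13}), observe that they agree, and invoke the disk-bundle classification of cohomogeneity one manifolds; the quotients are handled analogously via Corollaries~\ref{cor:groupsN13d} and~\ref{cor:isotropyP13}. A few of your anticipated details need adjustment: the principal isotropy is $\Zeit_2\cdot\SU(2)$ rather than $\SU(2)$ (the extra $\Zeit_2$ arises because $k$ is odd), and on the Davis side the two singular orbits are not located over ``$0$ and $\infty$'' of the M\"obius action---one sits at the unique interior fixed point $(e_3,e_1)$ of the $\SO(2)$-action on the $\sfGt$-orbit space (an upper half-plane), the other along its boundary---but these are exactly the facts you would uncover while executing the computation, and the identification of the slope $k$ in $\Km$ does emerge directly from the formula $\gamma\star v = e_1(\cos(k\theta)+\sin(k\theta)e_3)$ in that computation.
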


\begin{rem}
Theorem \ref{thm:introP13kfake} follows from Theorem \ref{thm:introBrieskornvarieties} above and the diffeomorphism classification of $N_d^{2n-1}$ in \cite{AtiyahBott} and \cite{Giffen} (or see Section 2.2).
\end{rem}
\begin{rem}
The space $P^{13}_1$, i.e., $k =1$, is diffeomorphic to the standard $\RP^{13}$ from the construction in \cite{Shimada} and \cite{RajanWilhelm}. From Theorem \ref{thm:introBrieskornvarieties} above, the known diffeomorphism classification of $N^{13}_k$ implies that there are $64$ different oriented diffeomorphism types of $P_k^{13}$s. 
\end{rem}

\begin{rem}
(a) The Davis actions of $\SO(2) \times \sfGt$ on Shimada's exotic spheres $\Sigma^{15}_k$s can be viewed as the octonionic analogs of the $\SO(2) \times \SO(3)$ actions on Milnor's exotic spheres $\Sigma^7$s found in the same paper \cite{Davis}. Note that $\SO(3)$ is the automorphism group of the quaternions, and a special case of the $\SO(2)\times \SO(3)$ actions on a certain $\Sigma^7$ was found in \cite{GromollMeyer}. 

(b) The Davis actions of $\SO(2) \times \SO(3)$ on Milnor's exotic spheres also leave the images of the embedded $5$-sphere invariant, and hence induce cohomogeneity one actions on the Hirsch-Milnor's fake $5$-projective spaces as observed in \cite{RajanWilhelm}. These actions are equivariantly diffeomorphic to those on the Brieskorn varieties $N^5_d$'s, which was first discovered by E. Calabi(unpublished, cf. \cite[p. 368]{HsiangHsiang})
\end{rem}

\begin{rem}
In \cite{ADPR}, U. Abresch, C. Dur\'an, T. P\"uttmann and A. Rigas gave a geometric construction of free exotic involutions on the Euclidean sphere $\sph^{13}$ using the wiedersehen metric on the Euclidean sphere $\sph^{14}$. Thus the quotient spaces are fake $13$-projective spaces. Moreover, in \cite{DuranPuettmann}, Dur\'an and P\"uttmann provided an explicit nonlinear action of $\sfO(2) \times \sfGt$ on the Euclidean sphere $\sph^{13}$, and showed that it is equivariantly diffeomorphic to the Brieskorn variety $M^{13}_{3}$.
\end{rem}

\smallskip

The second part of this paper is the study of the curvature properties of the invariant metrics on $\sph^{13}_k$ and $P^{13}_k$ with $\sfG = \SO(2) \times\sfGt$. Since any invariant metric on the quotient space $P_k^{13}$ can be lifted to an invariant metric on $\sph^{13}_k$, we restrict ourselves to the spheres $\sph^{13}_k$s, or equivalently $M_k^{13}$s. Note that $M^{13}_k$ and $M^{13}_{-k}$ are equivariantly diffeomorphic, and so we assume that $k \geq 1$. 

On a Riemannian manifold with cohomogeneity one action, the principal orbits are hypersurfaces, and there are precisely two non-principal orbits that have codimensions strictly bigger than one if the manifold is simply-connected. They are called singular orbits. In \cite{GroveZillerMilnorsphere}, K. Grove and W. Ziller constructed invariant metrics with non-negative sectional curvature on cohomogeneity one manifolds for which both singular orbits have codimension two. Particularly, their construction yields non-negatively curved metrics on $10$ of $14$ (unoriented) Milnor's spheres and all Hirsch-Milnor's fake $5$-projective spaces. However, not every cohomogeneity one manifold admits an invariant metric with non-negative curvature. The first examples were found by K. Grove, L. Verdiani, B. Wilking and W. Ziller in \cite{GVWZ}, and then generalized to a larger class in \cite{Heobstruction} by the first named author. The most interesting class in \cite{GVWZ} is the Brieskorn varieties $M_d^{2n-1}$. The Brieskorn variety $M_{d}^{2n-1}$ is homeomorphic to the sphere, if and only if, both $n$ and $d$ are odd. In \cite{GVWZ},  it is showed that for $n \geq 4$ and $d\geq 3$, $M_d^{2n-1}$ does not support an $\SO(2) \times \SO(n)$ invariant metric with non-negative curvature. In particular, there is no non-negatively curved $\SO(2) \times \SO(7)$ invariant metric on $M^{13}_d$, if $d\geq 3$. Since $\sfG$ is a proper subgroup in $\SO(2) \times \SO(7)$, there are more invariant metrics on $M^{13}_k$. One may suspect that there might be a chance to find an invariant metric with non-negative curvature. Nevertheless we show that the obstruction does appear even though the metric has a smaller symmetry group. 
\begin{thm}\label{thm:introobstruction}
For any odd integer $k \geq 3$, the Brieskorn variety $M^{13}_k$ does not support an $\SO(2) \times \sfGt$ invariant metric with non-negative curvature. 
\end{thm}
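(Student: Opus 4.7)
The plan is to adapt the Grove--Verdiani--Wilking--Ziller obstruction from \cite{GVWZ} (and its generalization in \cite{Heobstruction}) to the reduced symmetry group $\sfG = \SO(2)\times\sfGt$, and show that the same algebraic condition continues to rule out non-negative sectional curvature on $M_k^{13}$ when $k\geq 3$.

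First I would record the cohomogeneity one group diagram $\sfH \subset \Km, \Kp \subset \sfG$ for the $\sfG$-action on $M_k^{13}$. By Theorem~\ref{thm:introBrieskornvarieties} this action is cohomogeneity one, and its diagram is obtained by intersecting the known isotropies of the full $\SO(2)\times\SO(7)$-action with $\sfG$. The inclusion $\sfGt \subset \SO(7)$ together with the transitive actions $\sfGt/\SU(3)=\sph^6$ and $\SU(3)/\SU(2)=\sph^5$ pin down the restricted principal and singular isotropies, essentially replacing the $\SO(n-2)$ and $\SO(n-1)$ factors that appear in the full $\SO(2)\times\SO(7)$-diagram by $\SU(2)$ and $\SU(3)$. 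A normal geodesic $\gamma$ between the two singular orbits for the full action remains normal for the $\sfG$-action, and the slice representations of $\Km, \Kp$ on the normal bundles of $B_\pm$ can be read off from those of the full action.

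Next I would apply the algebraic obstruction of \cite{GVWZ} and \cite{Heobstruction}. In the Brieskorn setting it reduces to a linear identity among the orbit Killing fields along $\gamma$ coming from the $\SO(2)$-factor and from a maximal torus inside $\sfGt$, in which the Brieskorn integer $k$ appears as a coefficient; the identity holds only when $k=1$. The key point is that the Killing fields witnessing the obstruction---namely the generator of $\SO(2)$ acting as $(e^{2i\theta}, e^{-ik\theta})$ together with the generators of a torus in $\sfGt$---all lie in $\sfG$. Hence the obstruction of \cite{GVWZ} is inherited by the smaller symmetry group, and for $k\geq 3$ the identity fails, contradicting the existence of a non-negatively curved $\sfG$-invariant metric.

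The main obstacle will be ruling out that the extra degrees of freedom in $\sfG$-invariant metrics---which form a strictly larger family than the $\SO(2)\times\SO(7)$-invariant ones---can be used to evade the obstruction. Resolving this requires decomposing the tangent space of the principal orbit into $\sfH$-isotypical components and checking that the algebraic condition from \cite{GVWZ} is incompatible with any rescaling of these components. I expect this to succeed because $\sfGt$ acts transitively on $\sph^6$ and $\SU(3)$ acts transitively on $\sph^5$, so the slice representations at $B_\pm$ remain sufficiently rigid that the Killing-field computation of \cite{GVWZ} carries over essentially verbatim.
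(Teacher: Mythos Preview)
Your proposal has a genuine gap: the assertion that the \cite{GVWZ} computation ``carries over essentially verbatim'' is not correct, and the rigidity you invoke in the last paragraph does not hold in the way you need. Restricting the symmetry from $\SO(2)\times\SO(7)$ to $\sfG=\SO(2)\times\sfGt$ strictly \emph{enlarges} the family of invariant metrics: in the paper's decomposition the principal isotropy $\sfH=\Zeit_2\cdot\SU(2)$ acts on $\frakp$ with two equivalent four-dimensional summands $\frakm_1,\frakm_2$ (each the real form of the standard $\SU(2)$-representation), so Schur's lemma allows off-diagonal couplings between them. Concretely, an invariant metric is governed by ten functions rather than the six of the $\SO(2)\times\SO(7)$ case; the new ones are $a_{12}(t)=g_t(E_i,F_i)$ and $b_{12}(t)=g_t(E_1,F_4)$, together with $h_{12}(t)=g_t(Y_1,Y_2)$. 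There is no a priori reason these vanish, and the \cite{GVWZ} argument does not apply to metrics with $a_{12}\neq 0$.

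The paper deals with this in two substantial steps that your proposal does not anticipate. First, it uses the Verdiani--Ziller parallel-Jacobi rigidity to show that non-negative curvature forces $h_{12}\equiv b_{12}\equiv 0$ and ties $a_{12},a_2$ to $a_1$ through a single new constant $\beta$, so that $a_{12}(t)=-\beta a_1^2(t)$ and $a_2^2(t)=\beta^2 a_1^2(t)+1$. This reduces the problem, but the extra parameter $\beta$ survives. Second, a curvature computation on the totally geodesic singular orbit $B_-$ is used to confine $\beta$ to a compact interval, and then a delicate analysis of the sectional curvatures of the two-parameter family of planes $\mathrm{span}\{X_1+rX_2,\,E_1+qF_1\}$---requiring an auxiliary algebraic lemma about two quartic polynomials $\Psi_1,\Psi_2$ depending on $\beta$---yields the contradiction for all $k\geq 3$. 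The paper notes explicitly that the more direct \cite{GVWZ}-style estimate handles only $k\geq 5$; the case $k=3$ genuinely needs the quartic argument. So the ``main obstacle'' you flag is real and is precisely where the work lies; transitivity of $\sfGt$ on $\sph^6$ does not by itself pin down the metric on $\frakm_1\oplus\frakm_2$.
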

\begin{rem}
The techniques used to prove Theorem \ref{thm:introobstruction} are similar to those in \cite{GVWZ} and \cite{Heobstruction}. However the special feature of the Lie group $\sfGt$ and the strictly larger class of invariant metrics make the argument more involved.
\end{rem}

\begin{rem}
For the Brieskorn variety $M^{13}_d$ with $d\geq 4$ an even integer, the principal isotropy subgroup has a simpler form than the one in the odd case, see Remark \ref{rem:M13deven}. This leads to a much more complicated form of the invariant metrics in the even case, see Remark \ref{rem:M13deveninvariantmetrics}, which is not covered by our proof. So for an even integer $d\geq 4$, the question whether $M^{13}_d$ admits an $\SO(2) \times \sfGt$-invariant metric with non-negative curvature remains open. 
\end{rem}

From Theorems \ref{thm:introBrieskornvarieties} and \ref{thm:introobstruction}, we have the following
\begin{cor}
For any odd integer $k\geq 3$, the fake $13$-projective space $P^{13}_k$ does not support an $\SO(2) \times\sfGt$ invariant metric with non-negative curvature. 
\end{cor}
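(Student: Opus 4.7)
The plan is to derive the corollary immediately from Theorems \ref{thm:introBrieskornvarieties} and \ref{thm:introobstruction} by pulling any hypothetical invariant metric on $P^{13}_k$ back through the double cover to $M^{13}_k$. I would argue by contradiction: suppose that for some odd $k\geq 3$ the fake projective space $P^{13}_k$ admits an $\sfG$-invariant Riemannian metric $\bar g$ with non-negative sectional curvature. First I would lift $\bar g$ along the two-fold covering $\pi\colon \sph^{13}_k \to \sph^{13}_k/T = P^{13}_k$ to $g := \pi^*\bar g$. Since $\pi$ is a local isometry, $g$ also has non-negative sectional curvature. Because the Davis $\sfG$-action on $\sph^{13}_k$ commutes with $T$ and descends to the given $\sfG$-action on $P^{13}_k$ (as recorded in \cite{RajanWilhelm}), one has $\pi\circ\gamma=\gamma\circ\pi$ for every $\gamma\in\sfG$, and hence $\gamma^*g = \gamma^*\pi^*\bar g = \pi^*\gamma^*\bar g = \pi^*\bar g = g$; so $g$ is $\sfG$-invariant on $\sph^{13}_k$.

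Next I would transport $g$ along the equivariant diffeomorphism $\sph^{13}_k \to M^{13}_k$ supplied by Theorem \ref{thm:introBrieskornvarieties}. The result is an $\sfG = \SO(2)\times \sfGt$-invariant metric on the Brieskorn variety $M^{13}_k$ with non-negative sectional curvature, which for odd $k\geq 3$ directly contradicts Theorem \ref{thm:introobstruction}. This proves the corollary. There is no substantive obstacle in the corollary itself, since all of the depth has been absorbed into Theorems \ref{thm:introBrieskornvarieties} and \ref{thm:introobstruction}; the only point that requires care is the verification that lifting an $\sfG$-invariant metric along the $T$-covering produces an $\sfG$-invariant metric upstairs, and this rests solely on the commutation between the Davis action and $T$ already established in \cite{RajanWilhelm}.
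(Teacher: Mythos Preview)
Your proposal is correct and is exactly the argument the paper has in mind: the paper simply states that the corollary follows from Theorems \ref{thm:introBrieskornvarieties} and \ref{thm:introobstruction}, having already noted that ``any invariant metric on the quotient space $P_k^{13}$ can be lifted to an invariant metric on $\sph^{13}_k$'' (equivalently $M^{13}_k$). Your write-up just makes this lifting step explicit.
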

\begin{rem}
In contrast to the $P^{13}_k$s, it is observed by O. Dearricott that, following Grove-Ziller's construction, all fake Hirsch-Milnor's $5$-projective spaces admit $\SO(2) \times \SO(3)$ invariant metrics with non-negative curvature, see \cite[p. 334]{GroveZillerMilnorsphere}.
\end{rem}
\begin{rem}
As observed in \cite{SchwachTuschmann}, all $P^{13}_k$s and $\sph^{13}_k$s support even $\SO(2) \times \SO(7)$ invariant metrics that simultaneously have positive Ricci curvature and almost non-negative sectional curvature. For the invariant metrics with positive Ricci curvature alone, it also follows from the result in \cite{GroveZillerposRicci}.  A Riemannian manifold admits an almost non-negative sectional curvature if it collapses to a point with a uniform lower curvature bound. 
\end{rem}

From the classification of cohomogeneity one actions on homotopy spheres in \cite{Straume} by E. Straume, $M^{13}_k$s with $\sfG = \SO(2) \times \sfGt$ are the only nonlinear actions where the symmetry group does not have the form $\SO(2) \times \SO(n)$. Combining the classification in \cite{Straume}, the obstructions in \cite{GVWZ} and Theorem \ref{thm:introobstruction}, we have the following
\begin{cor}
For $n \geq 2$, let $\Sigma^n$ be a homotopy sphere. Suppose that $\Sigma^n$admits a non-negatively curved metric that is invariant under a cohomogeneity one action. Then either 
\begin{enumerate}
\item $\Sigma^n$ is equivariantly diffeomorphic to the standard sphere and the action is linear, or
\item $n= 5$, $\Sigma^5$ is the standard $5$-sphere and the non-linear action is given by $\SO(2) \times \SO(3)$ on the Brieskorn variety $M^5_k$, with $k\geq 3$ odd. 
\end{enumerate}
\end{cor}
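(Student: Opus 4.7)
The plan is to combine three ingredients: Straume's classification of cohomogeneity one actions on homotopy spheres \cite{Straume}, the obstruction of Grove--Verdiani--Wilking--Ziller \cite{GVWZ} for $\SO(2)\times\SO(m)$-invariant metrics on Brieskorn varieties, and Theorem \ref{thm:introobstruction} above. First I would invoke \cite{Straume}: up to equivariant diffeomorphism, every cohomogeneity one action on a homotopy sphere $\Sigma^n$ with $n\geq 2$ falls into one of three families: (i) a linear action on the standard sphere $\sph^n$; (ii) an $\SO(2)\times\SO(m)$-action on a Brieskorn variety $M^{2m-1}_d$ (with $m$ and $d$ both odd so that $M^{2m-1}_d$ is a homotopy sphere); or (iii) the exceptional $\sfG=\SO(2)\times\sfGt$-action on $M^{13}_k$ with $k$ odd, the unique nonlinear family whose symmetry group is not of the form $\SO(2)\times\SO(m)$. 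By Theorem \ref{thm:introBrieskornvarieties}, family (iii) is equivariantly identified with the Davis action on $\sph^{13}_k$.

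Next I would apply the two obstructions to prune this list. For family (iii), Theorem \ref{thm:introobstruction} rules out an $\sfG$-invariant non-negatively curved metric whenever $k\geq 3$, while the remaining value $k=1$ corresponds to the standard sphere with an essentially linear action, subsumed in (i). For family (ii), the main theorem of \cite{GVWZ} excludes non-negatively curved invariant metrics whenever $m\geq 4$ and $d\geq 3$; combined with the parity constraint $m$ odd, this leaves only $m=3$ with $d\geq 3$ odd, which is precisely alternative (2) of the corollary, and the boundary case $d=1$, in which the Brieskorn equation solves for $z_0$ and $M^{2m-1}_1$ is equivariantly diffeomorphic to $\sph^{2m-1}$ with a linear action, again subsumed in (i). Finally, by \cite{GroveZillerMilnorsphere}, the spaces $M^5_d$ with $d\geq 3$ odd do admit non-negatively curved invariant metrics, so alternative (2) cannot be excluded and the dichotomy is sharp.

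The main obstacle in carrying this out is really the careful identification of the boundary parameters $k=1$ and $d=1$ with linear actions on standard spheres, so as to place those cases cleanly in alternative (1); the substantive input is Theorem \ref{thm:introobstruction}, which supplies the missing piece that \cite{GVWZ} leaves open for the exceptional $\sfG$-symmetric family in dimension $13$. Once these pieces are in hand the corollary follows by direct enumeration over the three families produced by Straume's classification.
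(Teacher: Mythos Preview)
Your proposal is correct and takes essentially the same approach as the paper, which simply records that the corollary follows from combining Straume's classification \cite{Straume}, the obstruction in \cite{GVWZ}, and Theorem \ref{thm:introobstruction}. Your write-up is a faithful and more detailed unpacking of that sentence; the only superfluous step is invoking Theorem \ref{thm:introBrieskornvarieties} for family (iii), since the corollary is phrased directly in terms of the Brieskorn models and Theorem \ref{thm:introobstruction} already applies to $M^{13}_k$.
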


\smallskip

We refer to the Table of Contents for the organization of the paper. Theorem \ref{thm:introBrieskornvarieties} is proved in Section 3, and Section 6 is the proof of Theorem \ref{thm:introobstruction}.

\medskip

\textbf{Acknowledgement.} It is a great pleasure to thank Frederick Wilhelm who has brought this problem to our attention, and  we had numerous discussions with him on this paper. We also thank Wolfgang Ziller for useful communications, and Karsten Grove for his interest.

\medskip
\section{Preliminaries}
In this section, we recall the Davis action on the exotic $15$-spheres $\Sigma_k^{15}$s, and the Brieskorn varieties with cohomogeneity one action. We refer to \cite{Baez} and \cite{Murakami} for the basics of the algebra of the Cayley numbers (i.e., the octonions) and the Lie group $\sfGt$.

\subsection{Shimada's exotic $15$-spheres $\Sigma_k^{15}$s, the embedded $13$- and $14$-spheres and the Davis action}

Consider the Cayley numbers $\Cay$ and let $u \mapsto \bar u$ be the standard conjugation. A real inner product on $\Cay$ is defined by $u \cdot v = 1/2(u \bar {v} + v \bar{u})$. Let $\set{e_0, e_1, \ldots, e_7}$ be an orthonormal basis of $\Cay$ over $\Real$ with $e_0 = 1$.  We follow the multiplications of elements in $\Cay$ given by \cite{Murakami}, for example, $e_1 e_2 = e_3$, $e_1e_4 = e_5$ and $e_1 e_7 = e_6$. Any $v \in \Cay$ has the following form
\[
v = v_0 e_0 + v_1 e_1 + \cdots + v_7 e_7.
\]  
Denote $\Re v = v_0$ the real part and $\Im v = v_1 e_1 + \ldots + v_7 e_7$ the imaginary part. We have 
\[
\bar v = v_0 e_0 - v_1 e_1 - \ldots - v_7 e_7
\] 
and 
\[
\abs{v}^2 = v_0^2 +v_1^2 + \cdots + v_7^2 = v\bar v.
\]
The unit $7$-sphere consists of all unit octonions:
\[
\sph^7 = \set{v \in \Cay : \abs{v}=1}.
\]
We write $\sph^8 = \Cay \sqcup_{\phi}\Cay$ as the union of two copies of $\Cay$ which are glued together along $\Cay -\set{0}$ via the following map
\begin{eqnarray}
\phi : \Cay -\set{0} & \To & \Cay - \set{0} \label{Cayleyphimap}\\
u & \mapsto & \phi(u) = \frac{u}{\abs{u}^2}. \nonumber
\end{eqnarray}
For any two integers $m$ and $n$, let $E_{m,n}$ be the manifold formed by gluing the two copies of $\Cay \times \sph^7$ via the following diffeomorphism on $\pset{\Cay -\set{0}} \times \sph^7$:
\begin{equation}\label{Phimnmap}
\Phi_{m,n}: (u, v) \mapsto (u', v')  = \left(\frac{u}{\abs{u}^2}, \frac{u^m}{\abs{u}^m} v \frac{u^n}{\abs{u}^n}\right).
\end{equation}
The natural projection $p_{m,n} : E_{m,n}\To\sph^8$ sends $(u,v)$ to $u$ and $(u',v')$ to $u'$. It gives $E_{m,n}$ the structure of an $\sph^7$-bundle over $\sph^8$ with the transition map $\Phi_{m,n}$. The total space $E_{m,n}$ is homeomorphic to $\sph^{15}$, if and only if, $m+n = \pm 1$; see \cite[Section 2]{Shimada}. 

Using the fact that $\sfGt$ is the automorphism group of $\Cay$, in \cite{Davis}, Davis observed that $\sfGt$ acts on $E_{m,n}$ as follows:
\[
g(u, v) = (g(u), g(v))
\]
and 
\[
g(u',v') = (g(u'), g(v')).
\]
From \cite[Remark 1.13]{Davis},  the $\sfGt$-manifolds $E_{m,n}$ and $E_{m',n'}$ are equivariantly diffeomorphic, whenever $(m,n) = \pm (m,n)$ or $\pm(n,m)$.  Furthermore, the bundles $E_{m,n}$ admit another $\SO(2)$ symmetry via M\"obius transformations that commutes with the $\sfGt$-action.  Write an element $\gamma \in \SO(2)$ as
\begin{equation}\label{eqn:gammaab}
\gamma = \gamma(a,b) =
\begin{pmatrix}
a & b \\
-b & a
\end{pmatrix}
\quad \text{and} \quad a^2 + b^2 = 1.
\end{equation}
In terms of the coordinate charts, the action on the sphere bundle $E_{m,n}$ is defined by
\begin{eqnarray}
\gamma \star u & = & (a u + b)(- b u + a)^{-1} \label{eqn:gammau} \\
\gamma \star u' & = & (-b + a u') (a + b u')^{-1} \nonumber
\end{eqnarray}
and 
\begin{eqnarray}
\gamma \star v & = & \frac{(-b u + a)^m v (- b u + a)^n}{\abs{-b u + a}^{m+n}} \label{eqn:gammav}\\
\gamma \star v' & = & \frac{(a+ b \bar u')^m v' (a+ b \bar u')^n}{\abs{a+ b \bar u'}^{m+n}}. \nonumber
\end{eqnarray}
The formulas above are compatible with the transition map $\Phi_{m,n}$. Davis showed the following
\begin{lem}[Davis]
The formulas (\ref{eqn:gammau}) and (\ref{eqn:gammav}) give a well-defined action of $\SO(2)$ on $E_{m,n}$. Furthermore the action is $\sfGt$-equivariant, and for any $v\in \Cay$(not necessarily unit) we have 
\[
\abs{\gamma \star v} = \abs{v} \quad \text{and} \quad \abs{\gamma\star v'} = \abs{v'}.
\]
\end{lem}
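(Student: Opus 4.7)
The plan is to verify the three assertions---well-definedness as an $\SO(2)$ action on $E_{m,n}$, $\sfGt$-equivariance, and norm preservation---by exploiting the alternative structure of $\Cay$. The key structural observation is that for any $u \in \Cay$ the real subalgebra $\Real\bk{1,u}$ is commutative and associative (isomorphic to $\Cpx$ if $u\notin\Real$, otherwise to $\Real$), and, by Artin's theorem on alternative algebras, the subalgebra generated by any two elements---in particular by $u$ and $v$---is associative. Since the entries $a,b$ of $\gamma$ are real, every expression in the Möbius formulas for $\gamma\star u$ and $\gamma\star u'$ lives in $\Real\bk{1,u}$, and the fiber expression $(-bu+a)^m\, v\, (-bu+a)^n$ is an unambiguous product inside the associative subalgebra $\Real\bk{u,v}$.

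Compatibility with the transition map $\Phi_{m,n}$ is verified by direct computation. For the base, one checks $\phi(\gamma\star u)=\gamma\star\phi(u)$, which by the observation above reduces to an identity inside $\Cpx$---the standard fact that the inversion $z\mapsto z/\abs{z}^{2}$ intertwines the two Möbius formulas $(au+b)(-bu+a)^{-1}$ and $(-b+au')(a+bu')^{-1}$. For the fiber, one uses $\phi(u)=u/\abs{u}^{2}$ to rewrite $(-b\phi(u)+a)^{m}=(a\bar u - b)^{m}/\abs{u}^{2m}$ inside $\Real\bk{1,u}$, and then direct substitution combined with associativity on $\Real\bk{u,v}$ matches the two fiber formulas. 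The group-action law $\gamma_{1}\star(\gamma_{2}\star\,\cdot\,)=(\gamma_{1}\gamma_{2})\star\,\cdot$ on the base is immediate once one notes that $\gamma_{2}\star u$ remains in $\Real\bk{1,u}$, so the composition again reduces to the classical $\Cpx$-case; for the fiber, Artin associativity lets one collect the nested powers of $-bu+a$ into a single power corresponding to the product $\gamma_{1}\gamma_{2}$.

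The $\sfGt$-equivariance is essentially a tautology: every $g\in\sfGt$ is an $\Real$-algebra automorphism of $\Cay$, so $g(a)=a$, $g(b)=b$, $g$ commutes with the conjugation $u\mapsto\bar u$, and $g(xy)=g(x)g(y)$; applying $g$ to every factor in (\ref{eqn:gammau}) and (\ref{eqn:gammav}) therefore simply replaces $(u,v)$ by $(g(u),g(v))$. Norm preservation follows from the multiplicativity $\abs{xy}=\abs{x}\abs{y}$ of the Cayley norm:
\[
\abs{\gamma\star v}=\frac{\abs{(-bu+a)^{m}}\cdot\abs{v}\cdot\abs{(-bu+a)^{n}}}{\abs{-bu+a}^{m+n}}=\frac{\abs{-bu+a}^{m+n}\,\abs{v}}{\abs{-bu+a}^{m+n}}=\abs{v},
\]
and similarly for $v'$; note this computation uses no assumption on $\abs{v}$.

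The main technical obstacle is the bookkeeping forced by the nonassociativity of $\Cay$, both in matching the two fiber formulas under $\Phi_{m,n}$ and in composing two successive $\SO(2)$ rotations on the fiber. Artin's theorem dispatches this by confining every relevant product to the associative subalgebra generated by $u$ and $v$, so each identity reduces either to a symbolic manipulation inside that subalgebra or, for the Möbius part alone, to the familiar $\Cpx$-case.
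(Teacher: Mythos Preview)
The paper does not supply its own proof of this lemma; it is stated with the attribution ``Davis'' and a reference to \cite{Davis}, so there is no in-paper argument to compare against. Your task is therefore to judge whether your outline stands on its own.

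Your approach is the natural one and is correct. The two structural facts you invoke do all the work: (i) for $a,b\in\Real$, every expression built from $a,b,u$ lies in the commutative associative subalgebra $\Real\bk{1,u}\cong\Cpx$, so the M\"obius identities on the base reduce to the classical complex case; (ii) by Artin's theorem the unital subalgebra generated by $u$ and $v$ is associative, and since $\bar u=2(\Re u)\cdot 1-u$ lies in $\Real\bk{1,u}$, every factor appearing in the fiber formulas (including $(-bu+a)^{-1}$ and $(a+b\bar u')^m$) sits inside this associative subalgebra. This legitimizes the cancellations $(-b_2u+a_2)^{-m}(-b_2u+a_2)^{m}=1$ needed for the group law on the fiber and the matching of the two fiber charts under $\Phi_{m,n}$. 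The $\sfGt$-equivariance is indeed immediate from $g$ being a real-algebra automorphism, and the norm computation is valid without any associativity hypothesis because $\abs{xy}=\abs{x}\,\abs{y}$ holds unconditionally in a composition algebra, so $\abs{(-bu+a)^m v(-bu+a)^n}=\abs{-bu+a}^{m+n}\abs{v}$ regardless of parenthesization.

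One small point worth making explicit in a write-up: when you compose $\gamma_1\star(\gamma_2\star v)$, the new base point $\gamma_2\star u$ still lies in $\Real\bk{1,u}$, so the iterated fiber expression remains inside the single associative subalgebra $\Real\bk{u,v}$; you allude to this but it is the step where a careless reader might worry that a third generator has crept in.
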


Suppose now that $m+n = 1$ and $k = m-n$. So $k$ is an odd number and 
\begin{equation}\label{eqn:mnk}
m = \frac{k+1}{2} \quad \text{and} \quad n = \frac{-k+1}{2}.
\end{equation}
We set $\Sigma_k^{15} = E_{m,n}$, and note that it is homeomorphic to the $15$-sphere. A Morse function on $\Sigma_k^{15}$ in \cite{Shimada} is given by 
\begin{equation}\label{eqn:Morsefunctionf1}
f_1(x) = \frac{\Re v}{\sqrt{1+\abs{u}^2}} = \frac{\Re (u' (v')^{-1})}{\sqrt{1+\abs{u' (v')^{-1}}^2}}.
\end{equation}
Note that $f_1$ has only two critical points as $(u,v) = (0, \pm 1)$. Set 
\begin{equation}\label{eqn:S14k}
\sph^{14}_k = f_1^{-1}(0) = \set{x \in \Sigma_k^{15} : \Re v  = \Re (u' (v')^{-1})= 0}
\end{equation}
and it is diffeomorphic to the standard $\sph^{14}$ for all $k$. Consider the following function on $\sph^{14}_k$:
\begin{equation}\label{eqn:Morsefunctionf2}
f_2(x) = \frac{\Re(uv)}{\sqrt{1+\abs{u}^2}} = \frac{\Re v'}{\sqrt{1+\abs{u'}^2}}.
\end{equation}
It is straightforward to verify that on $\sph^{14}_k$, the function $f_2$ has precisely two non-degenerate critical points as $(u',v') = (0, \pm 1)$.  It follows that
\begin{eqnarray}
\sph_k^{13} & = & f_2^{-1}(0) \cap \sph_k^{14} \nonumber\\
& = & \set{x \in \Sigma_k : \Re (uv) = \Re v = \Re v' = \Re (u' (v')^{-1}) = 0}  \subset \Sigma_k^{15} \label{eqn:S13k}
\end{eqnarray}
is diffeomorphic to the standard $13$-sphere for all $k$. Let 
\begin{eqnarray}\label{eqn:involutionT}
T  &: & E_{m,n} \To E_{m,n} \\
& & (u, v) \mapsto (u, -v) \quad \text{and} \quad (u', v') \mapsto (u', -v') \nonumber
\end{eqnarray}
be the antipodal map on the fiber $\sph^7$. The two spheres $\sph^{14}_k$ and $\sph^{13}_k$ are invariant under this involution $T$. Denote 
\[
P_k^{14} = \sph^{14}_k/T \quad \text{and} \quad P_k^{13} = \sph^{13}_k/T
\]
the quotient spaces.
  
\begin{rem}
Note that Milnor's exotic $7$-spheres $\Sigma^7$s are diffeomorphic to $3$-sphere bundles over the $4$-sphere. The involution $T$ on $\Sigma^{15}$s is the analogue of the natural involution on $\Sigma^7$s given by the antipodal map of the $3$-sphere fiber, see \cite{Milnor7sphere} and \cite{HirschMilnor}.
\end{rem}

In \cite{RajanWilhelm}, Wilhelm and the second named author observed that the Davis action of $\sfG = \SO(2) \times \sfGt$ on $\Sigma_k^{15}$ leaves both $\sph_k^{14}$ and $\sph_k^{13}$ invariant and commutes with the involution $T$.

\begin{lem}
The $\SO(2)\times \sfGt$ action on $\Sigma^{15}_k$ restricts to an action on the spheres $\sph^{14}_k$, $\sph^{13}_k$ and descends to the quotient spaces $P^{14}_k$, $P^{13}_k$.
\end{lem}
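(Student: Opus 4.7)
The plan is to check three things: invariance of the defining conditions of $\sph^{14}_k$ and $\sph^{13}_k$ under the $\sfGt$-factor, invariance under the $\SO(2)$-factor, and commutation with the fiberwise antipodal map $T$.

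The $\sfGt$-part is purely algebraic. Every $g\in\sfGt$ is an $\Real$-algebra automorphism of $\Cay$, so it fixes $e_0$, preserves the standard inner product (hence both $\Re$ and $|\cdot|$), and satisfies $g(xy)=g(x)g(y)$ and $g(x^{-1})=g(x)^{-1}$. Applied to the diagonal formulas $g\cdot(u,v)=(g(u),g(v))$ and $g\cdot(u',v')=(g(u'),g(v'))$, each of the scalar quantities $\Re v$, $\Re(uv)$, $\Re v'$, $\Re(u'(v')^{-1})$ that cut out $\sph^{14}_k$ and $\sph^{13}_k$ is preserved, so the $\sfGt$-action restricts to both submanifolds.

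The $\SO(2)$-part is a direct computation using the Davis formulas. Write $w=-bu+a$; from $a^2+b^2=1$ one obtains $1+|\gamma\star u|^{2}=(|au+b|^{2}+|w|^{2})/|w|^{2}=(1+|u|^{2})/|w|^{2}$. For the fiber variable the crucial octonionic fact is that $\Cay$ is alternative, so by Artin's theorem the subalgebra generated by the two octonions $w$ and $v$ is associative, and inside it the real part has the cyclic property $\Re(AB)=\Re(BA)$. Combined with $m+n=1$, this yields
\[
\Re(w^{m}vw^{n})=\Re(vw^{m+n})=\Re(vw)=a\Re v-b\Re(uv),
\]
hence $f_{1}(\gamma\star x)=af_{1}(x)-bf_{2}(x)$. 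A parallel expansion of $(\gamma\star u)(\gamma\star v)$ inside $\langle w,v\rangle$ produces $f_{2}(\gamma\star x)=bf_{1}(x)+af_{2}(x)$. So $(f_{1},f_{2})$ transforms under $\SO(2)$ by a rotation, and this rotation rule — together with the analogous computation in the second chart — is what is needed to check that $\sph^{14}_k$ and $\sph^{13}_k$ are $\SO(2)$-invariant.

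Commutation with $T(u,v)=(u,-v)$ is then immediate from oddness in the fiber: $g\cdot(u,-v)=(g(u),-g(v))$ by linearity of $g$, and $\gamma\star(u,-v)=(\gamma\star u,-\gamma\star v)$ because $w^{m}vw^{n}/|w|$ is linear in $v$ and the M\"obius action on $u$ does not involve $v$. Thus $T$ commutes with each factor and the $\sfG$-action descends to the quotients $P^{14}_k$ and $P^{13}_k$. The one technical caution throughout is the non-associativity of $\Cay$: a priori the products $w^{m}vw^{n}$ and $(\gamma\star u)(\gamma\star v)$ are ambiguous and the cyclic identity for $\Re$ can fail, but Artin's theorem confines all relevant products to the associative subalgebra generated by the two octonions $w$ and $v$, after which the computations reduce to the associative case.
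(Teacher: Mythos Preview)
For $\sph^{13}_k$ your argument is correct and follows the same route as the paper: verify directly that $\Re v=\Re(uv)=0$ (and the $(u',v')$-analogues) are preserved under $\gamma$, using $m+n=1$ together with $\Re(xy)=\Re(yx)$. Your rotation law $(f_1,f_2)\mapsto(af_1-bf_2,\,bf_1+af_2)$ is a clean repackaging of precisely the paper's computation, and your appeal to Artin's theorem to make sense of the bracketing in $w^{m}vw^{n}$ and $(\gamma\star u)(\gamma\star v)$ is a useful technical point the paper leaves implicit. (Minor quibble: the associative subalgebra you really need is the one generated by $u$ and $v$, so that $au+b$ is absorbed as well; this agrees with $\langle w,v\rangle$ when $b\neq0$.) Commutation with $T$ is handled identically in both accounts.

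There is, however, a genuine gap for $\sph^{14}_k$. Your own rotation formula shows that $\{f_1=0\}$ is \emph{not} preserved by the $\SO(2)$-factor: if $f_1(x)=0$ but $f_2(x)\neq0$, then $f_1(\gamma\star x)=-b\,f_2(x)\neq0$ whenever $b\neq0$. A concrete witness in the $(u,v)$-chart is $u=v=e_1$, where $\Re v=0$, $\Re(uv)=-1$, and (for $k=1$, say) $\Re(\gamma\star v)=b$. Hence the sentence asserting that the rotation rule ``is what is needed to check that $\sph^{14}_k$ \dots\ [is] $\SO(2)$-invariant'' cannot stand. The paper's proof treats only $\sph^{13}_k$ in detail and declares the $\sph^{14}_k$ case ``similar''; your computation shows that the same argument does not carry over, so the $\sph^{14}_k$ assertion needs a separate justification (or correction) rather than an appeal to the $(f_1,f_2)$-rotation.
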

\begin{proof}
It is easy to see that the action commutes with the involution $T$. So it is sufficient to show that the defining conditions of $\sph^{13}_k$ and $\sph^{14}_k$ in $\Sigma_k^{15}$ are preserved by the $\SO(2)\times\sfGt$ action. In the following we give a proof for $\sph^{13}_k$, and the argument for $\sph^{14}_k$ is similar. 
 
Since $\sfGt$ is the automorphism group of $\Cay$, it is easy to see that the defining conditions are preserved. Next we consider the action by $\SO(2)$. Let $\gamma = \gamma(a,b)$ in equation (\ref{eqn:gammaab}). Note that $\Re (xy) = \Re(yx)$ for any $x, y \in \Cay$. We have
\begin{eqnarray*}
\Re\left(\gamma\star v\right) & = & \frac{1}{\abs{a- b u}}\Re\left\{(a- b u)^m v (a- b u)^n\right\} \\
& = & \frac{1}{\abs{a - b u}} \Re \left\{(a- bu)^{m+n} v \right\} \\
& = & \frac{1}{\abs{a- b u}}\left(a \Re v - b \Re(u v) \right) \\
& = & 0,
\end{eqnarray*}
and 
\begin{eqnarray*}
\Re\left((\gamma\star u) (\gamma \star v)\right) & = & \frac{1}{\abs{a- b u}}\Re \left\{(a u + b)(a- b u)^{-1}(a- bu)^m v (a- bu)^{n}\right\} \\
& = & \frac{1}{\abs{a- b u}}\Re (au + b)v \\
& = & 0.
\end{eqnarray*}

For the coordinates $(u', v')$, since $u' (v')^{-1} = u' \bar v' /\abs{v'}^2$ and $\Re \left(u' (v')^{-1} \right)= 0$; it follows that $\Re \left(\bar u' v'\right) = 0$. Similar to the case of $(u,v)$, we have
\begin{eqnarray*}
\Re (\gamma \star v') & = & \frac{1}{\abs{a+ b \bar u '}} \Re \left\{(a+ b \bar u')^m v' (a+ b \bar u')^n\right\} \\
& = & \frac{1}{\abs{a+ b \bar u'}} \Re\left\{ (a + b \bar u') v'\right\} \\
& = & 0
\end{eqnarray*}
and
\begin{eqnarray*}
\Re\left((\gamma \star u')(\gamma \star v')^{-1}\right) & = & \abs{a+b \bar u'}\Re \left\{(-b + au')(a + b u')^{-1}(a+ b\bar u')^{-n}(v')^{-1}(a+ b\bar u')^{-m}\right\} \\
& = & \abs{a+ b\bar u'} \Re\left\{(-b + au')(a + b u')^{-1}(a + b \bar u')^{-1} (v')^{-1}\right\} \\
& = & \abs{a+ b \bar u'}\left(a^2 + b^2 \abs{u'}^2 + ab (u' + \bar u')\right)\Re \set{(-b + au') (v')^{-1}} \\
& = & 0.
\end{eqnarray*}
This shows that $\sph^{13}_k$ is invariant under the $\SO(2)$ action, which finishes the proof.
\end{proof}

\begin{rem}
In \cite{RajanWilhelm}, following the Hirsch-Milnor argument in \cite{HirschMilnor}, they also showed that $P_k^{14}$ and $P_k^{13}$ are homotopy equivalent to the standard $\RP^{14}$ and $\RP^{13}$ for all $k$; and $P^{14}_k$ is not diffeomorphic to $\RP^{14}$, when $k \equiv 3, 5\mod 8$.
\end{rem}

\subsection{Brieskorn varieties, Kervaire spheres and homotopy projective spaces}

For any integers $n\geq 3$ and $d\geq 1$, the Brieskorn variety $M^{2n-1}_d$ is the smooth $(2n-1)$-dimensional submanifold of $\Cpx^{n+1}$, defined by the equations
\[
\left\{
\begin{array}{rcl}
z_0^{d} + z_1^{2} + \cdots + z_n^{2} & = & 0 \\
\abs{z_0}^2 + \abs{z_1}^2 + \cdots + \abs{z_n}^2 & = & 1. 
\end{array}
\right.
\]
When $d=1$, $M_1^{2n-1}$ is diffeomorphic to the standard sphere $\sph^{2n-1}$; and when $d=2$, $M_2^{2n-1}$ is diffeomorphic to the unit tangent bundle of $\sph^{n}$.

\begin{thm}[Brieskorn]
Suppose $n\geq 3$ and $d\geq 2$. The manifold $M^{2n-1}_d$ is homeomorphic to the standard sphere $\sph^{2n-1}$, if and only if, both $n$ and $d$ are odd numbers. Assume that $n$ and $d$ are odd numbers, it is the Kervaire sphere, if and only if, $d\equiv \pm 3 \mod8$. 
\end{thm}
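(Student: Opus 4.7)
The plan is to view $M^{2n-1}_d$ as the link of the Brieskorn-Pham isolated hypersurface singularity $f(z_0,\ldots,z_n) = z_0^d + z_1^2 + \cdots + z_n^2$ at the origin of $\Cpx^{n+1}$, and then to apply the standard Milnor fibration machinery. By Milnor's classical results for isolated hypersurface singularities, the Milnor fiber $F$ is $2n$-dimensional and $(n-1)$-connected, homotopy equivalent to a bouquet of $\mu = d-1$ copies of $\sph^n$, and the link $K = M^{2n-1}_d$ is $(n-2)$-connected. Using the Wang exact sequence of the Milnor fibration $\sph^{2n+1}\setminus K \to \sph^1$ together with the topological Poincar\'e conjecture in dimensions $\geq 5$, the key criterion becomes that $K$ is homeomorphic to $\sph^{2n-1}$ if and only if the characteristic polynomial $\Delta(t)$ of the geometric monodromy $h_\ast$ acting on $H_n(F;\Zeit)$ satisfies $\Delta(1) = \pm 1$.

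The next step is to invoke the Pham formula, which gives the eigenvalues of $h_\ast$ explicitly as $(-1)^n \zeta^j$ for $j = 1,\ldots, d-1$, where $\zeta = e^{2\pi i/d}$; hence
\[
\Delta(1) = \prod_{j=1}^{d-1}\pset{1 - (-1)^n \zeta^j}.
\]
A short case analysis via the cyclotomic identities $\prod_{j=1}^{d-1}(1 - \zeta^j) = d$ and $\prod_{j=1}^{d-1}(1 + \zeta^j) = 1$ for $d$ odd (the latter product vanishing for $d$ even, owing to the $j = d/2$ factor) then yields: $\Delta(1) = d \neq \pm 1$ if $n$ is even and $d\geq 2$; $\Delta(1) = 0$ if $n$ is odd and $d$ is even; and $\Delta(1) = 1$ if both $n$ and $d$ are odd. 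This settles the first assertion.

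For the Kervaire-sphere part, assume $n$ and $d$ are both odd. Since $M^{2n-1}_d$ bounds the parallelizable Milnor fiber $F$, it represents a class in the group $bP_{2n}$, which is detected by the Arf invariant of the intersection form on $H_n(F;\Zeit_2)$ together with its quadratic refinement coming from the framed self-intersections of the vanishing cycles. I would use Pham's explicit model of $F$ as an iterated join to produce a preferred basis of $H_n(F;\Zeit)$ indexed by $j_0 = 1,\ldots, d-1$ and compute the Seifert form combinatorially on this basis. Following Brieskorn (and as carried out in detail by Hirzebruch-Mayer), the resulting Arf invariant equals
\[
\mathrm{Arf}\pset{M^{2n-1}_d} \equiv \frac{d^2-1}{8} \mod 2,
\]
which is $0$ when $d\equiv \pm 1 \mod 8$ and $1$ when $d \equiv \pm 3 \mod 8$, proving the claim.

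I expect the Arf-invariant computation to be the main obstacle. The first half is a direct cyclotomic calculation once the Pham eigenvalue formula is in hand, but the second half demands an explicit basis of vanishing cycles, a careful evaluation of the Seifert form on that basis, and a delicate reduction modulo $2$ to extract the number-theoretic expression $(d^2-1)/8 \mod 2$; this is essentially the content of Brieskorn's original argument and not a one-line computation.
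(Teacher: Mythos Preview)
The paper does not prove this theorem at all: it is stated as a classical result attributed to Brieskorn (with the reference \cite{Brieskorn}) and is used only as background in Section~2.2. There is therefore no ``paper's own proof'' to compare against; the authors simply quote the statement.

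Your proposal is a faithful outline of the standard proof as it appears in Brieskorn's original paper and in the exposition of Hirzebruch--Mayer and Milnor. The reduction of the homeomorphism question to the condition $\Delta(1)=\pm 1$ via the Wang sequence and the Poincar\'e conjecture is correct, the Pham eigenvalue formula $(-1)^n\zeta^j$ is right for the exponents $(d,2,\ldots,2)$, and your cyclotomic case analysis is accurate. For the second part, your identification of the diffeomorphism type with the Arf invariant of the Milnor fiber and the formula $\mathrm{Arf}\equiv (d^2-1)/8 \pmod 2$ is exactly the Brieskorn computation; as you correctly anticipate, writing that step out carefully (building Pham's join basis, computing the Seifert form on it, and reducing mod~$2$) is where all the real work lies and is not something one can improvise in a line or two.

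In short: your plan is correct and is precisely the classical argument, but since the paper itself offers no proof there is nothing further to compare.
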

\begin{rem}
The Kervaire sphere is known to be exotic if $n\equiv 1 \mod 4$.
\end{rem}

Denote $I$ the following involution on $M^{2n-1}_d$: 
\[
(z_0, z_1, \ldots, z_n)\mapsto (z_0, - z_1, \ldots, -z_n).
\] 
Clearly it is fixed-point free. Atiyah and Bott showed the following result, see also \cite[Corollary 4.2]{Giffen}. 
\begin{thm}[{{\cite[Theorem 9.8]{AtiyahBott}}}]
If the involution $I$ on the topological spheres $M^{4m-3}_d$ and $M^{4m-3}_k$ are isomorphic, then 
\[
d \equiv \pm k \mod 2^{2m}. 
\]
In particular the involution $I$ acting on $M^{4m-3}_3 = \sph^{4m-3}$ is not isomorphic to the standard antipodal map whenever $m \geq 2$.
\end{thm}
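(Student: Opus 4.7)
The plan is to follow Atiyah--Bott's approach via equivariant index theory applied to a bounding manifold of $M_d^{4m-3}$. First, I would present $M_d^{4m-3}$ as the boundary of the compactified Milnor fiber
\[
W_d = \set{(z_0, z_1, \ldots, z_n) \in \Cpx^{n+1} : z_0^{d}+z_1^{2}+ \cdots +z_n^{2} = \eta,\ \abs{z}^2 \leq 1}
\]
for small $\eta > 0$ (with $n = 2m-1$), to which the involution $I$ extends smoothly. A direct check shows that the fixed point set of $I$ on $W_d$ consists of the $d$ isolated solutions of $z_0^d = \eta$, $z_1 = \cdots = z_n = 0$, and that the tangential representation of $I$ at each such point is $-\Id$ on $\Cpx^n$.

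Next, I would extract a diffeomorphism invariant $\beta(d)$ of the pair $(M_d, I)$ from the equivariant index $\mathrm{Ind}_{\Zeit/2}(W_d, I)$ of a natural elliptic operator (for instance the $\mathrm{Spin}^c$--Dirac or signature operator). The Atiyah--Singer--Segal fixed-point formula expresses this index as a sum of $d$ local contributions, one for each fixed point, each given by an explicit rational expression in the $I$-eigenvalues on the tangent space. The integer-valued ``interior'' part of the index is determined by $W_d$, so the fractional part $\beta(d) \in \Real/\Zeit$ depends only on the equivariant diffeomorphism type of the boundary data $(M_d, I)$.

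Now, if $(M_d, I) \cong (M_k, I)$ equivariantly, gluing $W_d$ to the orientation-reversal of $W_k$ along the common boundary produces a closed equivariant $(4m-2)$-manifold $X$, and additivity of the $G$-index gives
\[
\beta(d) - \beta(k) \;=\; \mathrm{Ind}_{\Zeit/2}(X) \;\in\; \Zeit.
\]
Carrying out the $2$-adic analysis of the fixed-point sum --- which after substitution becomes a power sum over the $d$-th roots of $\eta$ paired against the $-\Id$ tangential representation --- shows that this integrality requirement is equivalent to the congruence $d \equiv \pm k \pmod{2^{2m}}$. The sign ambiguity reflects the symmetry $(z_0, \ldots, z_n) \mapsto (\bar z_0, \ldots, \bar z_n)$, which identifies $W_d$ with a reorientation of itself.

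The main obstacle is precisely the $2$-adic analysis in the final step: one must match the number $d$ of fixed points, the tangential $-\Id$ representation, and the complex dimension $2m-1$ to pin down the denominator as exactly $2^{2m}$, neither larger nor smaller. Once this congruence is established, the special case is immediate: since $3 \not\equiv \pm 1 \pmod{2^{2m}}$ for every $m \geq 2$, the involution $I$ on $M_3^{4m-3}$ cannot be equivariantly diffeomorphic to the standard antipodal action on $M_1^{4m-3} = \sph^{4m-3}$.
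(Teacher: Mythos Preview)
The paper does not prove this theorem at all: it is stated with attribution to \cite[Theorem 9.8]{AtiyahBott} (see also \cite{Giffen}) and used as a black box to deduce the corollary on the $64$ distinct $M_k^{13}/I$. There is therefore no ``paper's own proof'' to compare your proposal against.

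That said, your sketch is a faithful outline of the Atiyah--Bott argument: bound $M_d^{4m-3}$ by the Milnor fiber, extend the involution, apply the $G$-signature (or a related equivariant index) theorem, and read off the congruence from the local fixed-point contributions modulo the integer ambiguity. You correctly locate the nontrivial step in the $2$-adic computation that produces the exact modulus $2^{2m}$; Atiyah and Bott carry this out explicitly via the $G$-signature formula, where each of the $d$ fixed points contributes $(-1)^n i^n = i^{2m-1}$ (up to a factor), and the resulting sum is analyzed modulo powers of $2$. Your proposal is not a complete proof --- it defers precisely the hard computation --- but as a roadmap it is accurate, and nothing further is expected here since the paper itself only quotes the result.
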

\begin{cor}
There are $64$ smoothly distinct real projective spaces $M^{13}_k/I$ with $k = 1, 3, \ldots, 127$. 
\end{cor}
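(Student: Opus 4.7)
The plan is to derive the corollary directly from the preceding Atiyah--Bott theorem with $m=4$ (so that $2^{2m}=256$), combined with a routine covering-space observation and an elementary numerical estimate.

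First, I would note that for every odd $k$ in $\{1,3,\ldots,127\}$, the Brieskorn variety $M_k^{13}$ is a homotopy $13$-sphere by Brieskorn's theorem, so the quotient $M_k^{13}/I$ is a closed smooth manifold whose universal (double) cover is $M_k^{13}$ and whose nontrivial deck transformation is the involution $I$. Consequently, any diffeomorphism $\phi: M_{k_1}^{13}/I \to M_{k_2}^{13}/I$ lifts to a diffeomorphism $\tilde\phi: M_{k_1}^{13}\to M_{k_2}^{13}$ conjugating the two copies of $I$, which is exactly the notion of \emph{isomorphic} involutions appearing in the Atiyah--Bott theorem.

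Next, applying that theorem with $4m-3=13$ yields the congruence $k_1 \equiv \pm k_2 \pmod{256}$ as a necessary condition for $M_{k_1}^{13}/I$ and $M_{k_2}^{13}/I$ to be diffeomorphic. For $k_1,k_2 \in \{1,3,\ldots,127\}$, however, we have the elementary bounds
\[
|k_1-k_2|\le 126 < 256 \quad \text{and} \quad 2\le k_1+k_2\le 254 < 256,
\]
so neither $k_1 \equiv k_2 \pmod{256}$ nor $k_1 \equiv -k_2 \pmod{256}$ can hold unless $k_1=k_2$. Finally, since the set of odd integers between $1$ and $127$ contains exactly $(127+1)/2=64$ elements, we obtain $64$ pairwise non-diffeomorphic quotients $M_k^{13}/I$.

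The only step requiring any thought is the lifting argument in the first paragraph, but because $\pi_1(M_k^{13}/I)\cong \Zeit_2$ is generated by $I$ and $M_k^{13}$ is simply connected, this is standard covering-space theory; thus the proof reduces to the cited Atiyah--Bott bound together with a trivial numerical check, and I anticipate no genuine obstacle.
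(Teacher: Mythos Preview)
Your argument is correct and is precisely the natural deduction from the Atiyah--Bott theorem that the paper has in mind; the paper states the corollary without proof, leaving the lifting argument and the numerical check implicit. There is nothing to compare---you have simply spelled out the intended reasoning.
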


The group $\tilde{\sfG} = \SO(2) \times \SO(n)$ acts on $M_d^{2n-1}$ by
\[
\left(e^{i \theta}, A\right)\left(z_0, Z\right) = \left(e^{2i \theta}z_0, e^{-i d \theta} A Z\right), \quad\text{for } (z_0, Z) \in \Cpx \oplus \Cpx^n.
\]
Note that our convention is different from the one in \cite{GVWZ}, as we have $e^{-id\theta}$ for the action of $e^{i\theta}$ on $Z = \pset{z_1, \ldots, z_n}^t$. The norm $\abs{z_0}$ is invariant under this action, and two points belong to the same orbit if and only if they have the same value of $\abs{z_0}$. Let $t_0$ be the unique positive solution of $t_0^d + t_0^2 = 1$, and then we have $0\leq \abs{z_0}\leq t_0$. It follows that the orbit space is $[0, t_0]$. The orbit types and isotropy subgroups of this action have been well-studied, see for example, \cite{HsiangHsiang}, \cite{BackHsiang} and \cite{GVWZ}. 

In our case, we assume that $d$ is odd. When $n=7$, the embedding $\sfGt \subset \SO(7)$ induces the action of $\sfG = \SO(2) \times \sfGt$ on $M^{13}_d$. To describe the isotropy subgroups of the $\sfG$-action we introduce the following subgroups in $\sfGt$:  
\begin{itemize}
\item Denote $\sfO(6)$, the subgroup in $\SO(7)$ that maps $e_1$ to $\pm e_1$, $\SO(6)$ the subgroup that fixes $e_1$, and $\SU(3) = \SO(6) \cap \sfGt$.

\item The other subgroup in $\sfGt$ that fixes $e_3$ is denoted by $\SU(3)_3$, and the complex structure on $\Cpx^3 = \Span_\Real \set{e_1, e_2, e_4, e_7, e_6, e_5}$ is given by the left multiplication of $e_3$. Note that
\[
\left(\SO(2)\times\SO(5)\right) \cap \sfGt = \U(2) \subset \SU(3)_3
\]
where $\SO(2)\times \SO(5) \subset \SO(7)$ has the block-diagonal form, and the embedding $\U(2) \subset \SU(3)_3$ is given by $h \mapsto \diag\set{(\det h)^{-1}, h}$. To see this, take $A = \diag\set{A_1, A_2}\in \left(\SO(2) \times \SO(5)\right)\cap \sfGt$ with 
\[
A_1 = 
\begin{pmatrix}
\cos t & \sin t \\
-\sin t & \cos t
\end{pmatrix}
\]
for some $t$. Since $e_3 =e_1 e_2$, we have 
\begin{eqnarray*}
A(e_3) & = & A(e_1)A(e_2)\\
& = &\left(e_1 \cos t + e_2 \sin t\right) \left(- e_1\sin t + e_2\cos t\right)\\
& = & e_3
\end{eqnarray*}
and thus $A \in \SU(3)_3$. Using the complex structure of $\SU(3)_3$, $A_1$ acts on $\Cpx = \Span_\Real\set{e_1, e_2}$ by $e^{it}$, and $A_2$ acts invariantly on $\Cpx^2 = \Span_\Real\set{e_4, e_7, e_6, e_5}$. So the element $A$ embeds diagonally in $\SU(3)_3$ with $(1,1)$-entry $e^{it}$. 

\item The common subgroup $\SU(2) = \SU(3) \cap \SU(3)_3$ and it is also given by $\SU(2) = \SO(4) \cap \sfGt$ where $\SO(4) \subset \SO(7)$ as $A \mapsto \diag\set{I_3, A}$ and $I_3$ is the identity matrix. 
\end{itemize}

Since $\sfGt$ acts transitively on $\sph^6 = \set{v \in \Cay : \Re v = 0 \text{ and } \abs{v} = 1}$ with $\SU(3)$ and $\SU(3)_3$ as isotropy subgroups at $e_1$ and $e_3$ respectively, these two groups are conjugate by an element in $\sfGt$. 

We follow the notions in \cite{GVWZ} to determine the isotropy subgroups. Denote $\Bm$ the singular orbit with $\abs{z_0} = 0$, and choose $p_- = (0,1,i, 0,\ldots, 0) \in \Bm$ with isotropy subgroup $\Km$. We also denote $\Bp$ the singular orbit with $\abs{z_0} = t_0$, and choose $p_+ = (t_0, i \sqrt{t_0^d}, 0, \ldots, 0)$ with isotropy subgroup $\Kp$. Note that $\Bm$ and $\Bp$ have codimensions $2$ and $n-1 = 6$ respectively. Let $c(t)$ be a normal minimal geodesic connecting $p_- = c(0)$ and $p_+ = c(L)$. The isotropy subgroup at $c(t)$($0< t< L$) stays unchanged that is the principal isotropy subgroup $\sfH$. We have

\begin{thm}\label{thm:groupsM13d}
The cohomogeneity one action of $\sfG = \SO(2) \times \sfGt$ on $M_d^{13}$ with $d$ odd has the following isotropy subgroups:
\begin{enumerate}
\item The principal isotropy subgroup is
\[
\sfH = \Zeit_2 \cdot \SU(2) = \left(\eps, \diag\set{\eps, \eps, 1, A}\right) 
\]
where $\eps = \pm 1$ and $A$ is a $4\times 4$-matrix. 
\item At $p_-$, the isotropy subgroup is 
\[
\Km = \SO(2) \SU(2) = \left(e^{i \theta}, \diag\set{\begin{pmatrix}\cos d\theta & \sin d\theta \\ - \sin d\theta & \cos d\theta \end{pmatrix}, 1, A}\right)
\]
where $A$ is a $4\times 4$-matrix.
\item At $p_+$, the isotropy subgroup is 
\[
\Kp = \sfO(6)\cap \sfGt = \left(\det B, \diag\set{\det B, B}\right)
\]
where $B \in \sfO(6)\cap \sfGt$.
\end{enumerate}
\end{thm}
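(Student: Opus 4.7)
The plan is to read off each isotropy subgroup directly from the action formula $(e^{i\theta},A)(z_0,Z) = (e^{2i\theta}z_0,e^{-id\theta}AZ)$ applied at the chosen reference points, and then identify the resulting conditions with the $\sfGt$-subgroups catalogued immediately before the theorem.

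I would first compute $\Kp$. Writing $p_+ = (t_0, i\sqrt{t_0^d}, 0, \ldots, 0)$ and imposing $(e^{i\theta},A)\cdot p_+ = p_+$, the first coordinate equation $e^{2i\theta}t_0 = t_0$ forces $e^{i\theta}=\epsilon\in\{\pm 1\}$, and since $d$ is odd one also has $e^{-id\theta}=\epsilon$. The remaining equation reduces to $Ae_1 = \epsilon e_1$, i.e.\ $A\in\sfO(6)\cap\sfGt$. Writing $A=\diag(\epsilon,B)$ with $B\in\sfO(6)\cap\sfGt$ and using $\det A = 1$ yields $\epsilon=\det B$, which is exactly the stated form of $\Kp$.

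Next I would compute $\Km$. At $p_- = (0,1,i,0,\ldots,0)$ the first coordinate equation is vacuous, and splitting the second into real and imaginary parts, $A$ must preserve $\Span_\Real\{e_1,e_2\}$ and restrict to it as the $2\times 2$ rotation block displayed in the theorem. Because $A$ is an octonion automorphism, $Ae_3 = A(e_1e_2) = (Ae_1)(Ae_2)$; using $e_i^2=-1$ and $e_2e_1 = -e_3$ a short calculation gives $Ae_3 = e_3$, so $A\in\SU(3)_3$. The recalled identification $\U(2)=(\SO(2)\times\SO(5))\cap\sfGt\subset\SU(3)_3$ then says that, once the rotation block on $\Span\{e_1,e_2\}$ is fixed, the remaining freedom in $A$ is exactly a $4\times 4$ matrix on $\Span\{e_4,e_5,e_6,e_7\}$, producing the stated form of $\Km$.

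Finally, along the minimal normal geodesic from $p_-$ to $p_+$ the principal isotropy is $\sfH = \Km \cap \Kp$, so I would intersect the two previous descriptions. The $\Kp$-condition forces $e^{i\theta}=\epsilon\in\{\pm 1\}$, and the $\Km$-rotation block then becomes $\epsilon\,I_2$ (this is where $d$ odd is used, so that $\cos(d\pi)=-1$). When $\epsilon=1$ the element $A$ fixes $e_1,e_2$, and hence $e_3$, so it lies in $\SU(3)\cap\SU(3)_3=\SU(2)$ acting on $\Span\{e_4,\ldots,e_7\}$. When $\epsilon=-1$ the element $A$ sends $e_1\mapsto -e_1$, $e_2\mapsto -e_2$ and fixes $e_3$, so its restriction to the associative subalgebra $\Span\{1,e_1,e_2,e_3\}$ is the inner automorphism by $e_3$; the set of extensions to $\sfGt$ is then a coset of the pointwise stabilizer $\SU(2)=\SO(4)\cap\sfGt$. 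The two components together give $\Zeit_2\cdot\SU(2)$, written in the matrix form of the statement.

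The main technical point is the $\epsilon=-1$ component. One has to propagate $Ae_4$ to $Ae_5,Ae_6,Ae_7$ using the automorphism identity on the octonionic relations $e_1e_4=e_5$, $e_1e_6=-e_7$, $e_2e_4=e_6$, $e_2e_5=e_7$, $e_3e_4=e_7$, $e_3e_5=e_6$, and verify that the resulting $4\times 4$ block is parametrized by a $3$-sphere worth of choices forming a single coset of the $\SU(2)$ appearing at $\epsilon=+1$, so that the full $\sfH$ indeed assembles into $\Zeit_2\cdot\SU(2)$ and not into a larger or different extension. This is also the step that distinguishes the $\sfG$-case from the $\tilde\sfG=\SO(2)\times\SO(7)$ case, whose principal isotropy is the larger $\Zeit_2\cdot\SO(5)$.
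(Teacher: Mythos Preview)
The paper does not give an explicit proof of this theorem; it records the result after recalling that the $\SO(2)\times\SO(7)$ isotropies on $M_d^{2n-1}$ are already known from \cite{HsiangHsiang}, \cite{BackHsiang}, \cite{GVWZ}, and after cataloguing the relevant intersections $\SO(m)\cap\sfGt$. The implicit argument is simply ``intersect the known $\SO(2)\times\SO(7)$ isotropy groups with $\SO(2)\times\sfGt$.'' Your approach is different: you recompute each isotropy group from scratch via the action formula, which is more self-contained and perfectly legitimate. Your computations of $\Kp$ and $\Km$ are correct and match the paper's conclusions.

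There is, however, a genuine gap in your determination of $\sfH$. The assertion ``along the minimal normal geodesic from $p_-$ to $p_+$ the principal isotropy is $\sfH=\Km\cap\Kp$'' is not a general fact about cohomogeneity one actions; only the inclusion $\sfH\subseteq\Km\cap\Kp$ is automatic. An element fixing both endpoints $p_\pm$ need not fix the chosen normal geodesic pointwise, since it may permute the normal directions at $p_-$ nontrivially. You should close this as follows. Your intersection computation gives $\Km\cap\Kp=\Zeit_2\cdot\SU(2)$, and the dimension count $\dim\sfH=\dim\sfG-12=3$ forces $\sfH_0=\SU(2)$. To see that $\sfH$ picks up the second component, use that the normal sphere $\Kp/\sfH$ at $p_+$ is $\sph^{5}$, hence connected; since $\Kp=\sfO(6)\cap\sfGt$ has two components, $\sfH$ must meet both, so $\sfH=\Zeit_2\cdot\SU(2)=\Km\cap\Kp$. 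Alternatively, compute directly at a principal point such as $(s,\,a,\,ib,\,0,\dots,0)$ with $0<s<t_0$, $a^2-b^2=-s^d$, which immediately yields $Ae_1=\eps e_1$, $Ae_2=\eps e_2$ and hence the stated $\sfH$.
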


\begin{rem}\label{rem:M13dgroups}
Denote $j$, the complex structure given by the left multiplication of $e_3$. For the group $\sfH$, we have $\diag\set{\eps, \eps, 1, A} \in \pset{\SO(2) \times \SO(5)}\cap \sfGt$ and $A \in \U(2) \subset \SU(3)_3$ with $\det A = \eps$. For the group $\Km$, we have 
\[
 \diag\set{\begin{pmatrix}\cos d\theta & \sin d\theta \\ - \sin d\theta & \cos d\theta \end{pmatrix}, 1, A}\in \pset{\SO(2) \times \SO(5) }\cap \sfGt
\]
and $A\in \U(2) \subset \SU(3)_3$ with $\det A = e^{- j d \theta}$. 
\end{rem}

\begin{rem}\label{rem:M13deven}
If $d$ is an even integer, then the isotropy subgroup $\Km$ is the same as in the case $d$ odd. The other two isotropy subgroups are 
\begin{eqnarray*}
\sfH = \Zeit_2 \times \SU(2) = \pset{\eps, \diag\set{I_3, A}} \\
\Kp = \Zeit_2 \times \SU(3) = \pset{\eps, \diag\set{1, B}}
\end{eqnarray*}
where $\eps = \pm 1$, $A \in \SO(4) \cap \sfGt = \SU(2)$ and $B \in \SO(6) \cap \sfGt = \SU(3)$.
\end{rem}

Clearly the ${\sfG}$-action commutes with the involution $I$ and hence induces an action on $N_d^{13} = M_d^{13}/I$. Write $[z_0, z_1, \ldots, z_7]\in N^{13}_d$, the equivalent class under the involution $I$. 

\begin{cor}\label{cor:groupsN13d}
The cohomogeneity one action of $\sfG = \SO(2) \times \sfGt$ on $N^{13}_d = M^{13}_d/I$ with $d$ odd, has the following isotropy subgroups.
\begin{enumerate}
\item The principal isotropy subgroup is 
\[
\bar \sfH = \Zeit_2 \times \left(\Zeit_2 \cdot \SU(2)\right)  =\left(\eps_1, \diag\set{ \eps_2, \eps_2, 1, A}\right)
\]
where $\eps_{1,2} = \pm 1$ and $A$ is a $4\times 4$-matrix.
\item The singular isotropy subgroup at $[0,1,i, 0, \ldots, 0]$ is
\[
\barKm = \Zeit_2 \cdot\SO(2) \SU(2) = \left(e^{i\theta}, \diag\set{\eps \begin{pmatrix}\cos d\theta & \sin d\theta \\ - \sin d\theta & \cos d\theta \end{pmatrix}, 1, A}\right)
\]
where $\eps = \pm 1$ and $A$ is a $4\times 4$-matrix.
\item The singular isotropy subgroup at $[t_0, i \sqrt{t_0^d}, 0, \ldots, 0]$ is 
\[
\barKp = \Zeit_2 \times \left(\sfO(6) \cap \sfGt\right) = \left(\eps, \diag\set{\det B, B}\right)
\] 
where $\eps = \pm 1$ and $B \in \sfO(6) \cap \sfGt$. 
\end{enumerate}
\end{cor}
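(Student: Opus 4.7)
The plan is to exploit the fact that, for $d$ odd, the involution $I$ on $M^{13}_d$ coincides with the action of a central element $g_0\in\sfG$, so the quotient $N^{13}_d = M^{13}_d/I$ is a $\sfG$-equivariant $\Zeit_2$-quotient by a subgroup of the acting group itself; the three isotropy subgroups in $\sfG$ then follow from Theorem \ref{thm:groupsM13d} by a routine two-fold extension.

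First I would verify that $g_0 := (-1,\Id)\in\SO(2)\times\sfGt$ acts on $M^{13}_d$ exactly as $I$. Unwinding the $\sfG$-action, $g_0\cdot(z_0,Z) = (e^{2\pi i}z_0, e^{-id\pi}\,\Id\cdot Z) = (z_0,(-1)^d Z)$, and since $d$ is odd this equals $(z_0,-Z) = I(z_0,Z)$. Because $g_0$ is central in $\sfG$ and $g_0^2 = e$, for any point $p\in M^{13}_d$ with $\sfG$-isotropy $K_p$ the stabilizer of $[p]\in N^{13}_d$ equals $\bar K_p = K_p\langle g_0\rangle$; when $g_0\notin K_p$ this is the disjoint union $K_p\sqcup g_0 K_p$, which is abstractly the direct product $K_p\times\Zeit_2$.

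Then I would verify case by case that $g_0$ lies outside each of $\sfH$, $\Km$ and $\Kp$, and rewrite the union $K_p\cup g_0 K_p$ in the form stated in the corollary. For $\sfH = \Zeit_2\cdot\SU(2)$ the $\SO(2)$-component is already a sign $\eps$, rigidly tied to $\diag\{\eps,\eps,1,A\}$; multiplying by $g_0$ decouples this sign, producing the two independent signs $(\eps_1,\eps_2)$ that define $\bar{\sfH}$. The analogous decoupling for $\Kp = \sfO(6)\cap\sfGt$ — where the $\SO(2)$-component equals $\det B$ — gives $\barKp = \Zeit_2\times(\sfO(6)\cap\sfGt)$ directly.

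The main obstacle is $\barKm$, since $\Km$ is a continuous one-parameter family $(e^{i\theta},\diag\{R_{d\theta},1,A\})$, where $R_{d\theta}$ denotes the rotation by $d\theta$. Here the odd-$d$ identity $R_{d\pi}=-I_2$ is crucial: it first of all implies $g_0\notin\Km$, and upon reparametrizing $\phi = \theta + \pi$ in the coset $g_0\Km$ it gives $R_{d\theta} = -R_{d\phi}$, so that these coset elements take the form $(e^{i\phi},\diag\{-R_{d\phi},1,A\})$. Combining both cosets yields exactly the claimed one-parameter family $(e^{i\theta},\diag\{\eps R_{d\theta},1,A\})$ with $\eps=\pm 1$, i.e., $\barKm = \Zeit_2\cdot\SO(2)\SU(2)$. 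Absorbing $g_0$ into a shift of the $\SO(2)$-parameter at the cost of a sign on the rotation block is the only genuinely delicate step.
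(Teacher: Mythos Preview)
Your argument is correct. The paper states this corollary without proof, treating it as an immediate consequence of Theorem~\ref{thm:groupsM13d}; your observation that for $d$ odd the involution $I$ coincides with the action of the central element $g_0=(-1,\Id)\in\sfG$ is exactly the clean way to make that passage explicit, and your case-by-case verification (including the reparametrization $\phi=\theta+\pi$ for $\barKm$, which also correctly shifts the determinant constraint on $A$ to $\det A=\eps\,e^{-jd\theta}$ as noted in the Remark following the corollary) fills in all the details the paper omits.
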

\begin{rem}
Similar to Remark \ref{rem:M13dgroups}, for the group $\bar\sfH$ we have $A\in \U(2) \subset \SU(3)_3$ with $\det A = \eps_2$, and for the group $\barKm$ we have $A \in \U(2) \subset \SU(3)_3$ with $\det A = \eps e^{- jd\theta}$. 
\end{rem}

\medskip
\section{The cohomogeneity one actions of $\sfG =\SO(2) \times \sfGt$ on $\sph^{13}_k$ and $P^{13}_k$}

In this section we determine the cohomogeneity one action of $\sfG$ on $\sph^{13}_k$ and $P^{13}_k$, see Theorem \ref{thm:isotropyS13} and Corollary \ref{cor:isotropyP13}. Then we prove Theorem \ref{thm:introBrieskornvarieties} in the Introduction. At the end of this section, we determine the Weyl group of the cohomogeneity one action on $M_k^{13}$, see Proposition \ref{prop:Weylgroupelements}. 

Throughout this section, we assume that $k$ is an odd integer. For the basics of cohomogeneity one manifolds, we refer to \cite[Section 1]{GroveWilkingZiller}.

\smallskip

Since the actions of $\SO(2)$ and $\sfGt$ commute, we determine the orbit space $\calB$ of $\sph^{13}_k$ under the $\sfGt$ action, and then consider the $\SO(2)$-action on $\calB$.

\begin{prop}\label{prop:G2orbitS13}
The orbit space of $\sph^{13}_k$ under the $\sfGt$-action is 
\[
\calB^2 =\calB_1 \sqcup_{\Phi} \calB_2
\]
with $\calB_1 \cong \calB_2 \cong \Real \times [0, \infty)$, where the two charts are determined as follows:
\begin{enumerate}
\item the point $[x_1 + x_2 e_3,e_1]$ in $\calB_1$ is identified with the $\sfGt$-orbit at $(x_1 + x_2 e_3, e_1)$ in the chart with coordinates $(u,v)$;
\item the point $[x_1' + x_2' e_3,e_1]$ in $\calB_2$ is identified with the $\sfGt$-orbit at $(x_1' + x_2' e_3, e_1)$ in the chart with coordinates $(u',v')$,
\end{enumerate}
and the gluing map $\Phi : \calB_1 \backslash\set{0} \To \calB_2 \backslash\set{0}$ is given by
\[
\Phi\left([x,e_1]\right) = \left[x/\abs{x}^{2}, e_1\right] \quad \text{for any }  x= x_1 + x_2 e_3 \ne 0.
\]
\end{prop}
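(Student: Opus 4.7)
The plan is to parameterize $\sfGt$-orbits in each of the two coordinate charts of $\sph^{13}_k$ separately, and then compute how the transition map (\ref{Phimnmap}) identifies the two parameterizations.

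In the $(u,v)$-chart, an arbitrary point of $\sph^{13}_k$ satisfies $\Re v=0$ and $\Re(uv)=0$. Since $\sfGt$ acts transitively on the unit $6$-sphere of imaginary octonions with isotropy $\SU(3)$ at $e_1$, a $\sfGt$-element moves $v$ to $e_1$; a direct computation of $\Re(ue_1)=-u_1$ then shows the surviving constraint is $u_1=0$, so $u\in\Real\oplus\Span\set{e_2,\ldots,e_7}$. The stabilizer $\SU(3)$ fixes $\Re u$ and acts as the standard representation on $\Span\set{e_2,\ldots,e_7}\cong\Cpx^{3}$, hence transitively on each concentric sphere; I then align $\Im u$ to $\abs{\Im u}\,e_3$. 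This produces the normal form $(x_1+x_2 e_3,e_1)$ with $x_1=\Re u\in\Real$ and $x_2=\abs{\Im u}\ge 0$. Uniqueness of $(x_1,x_2)$ follows from the fact that $\Re u$ and $\abs{u}^2$ are $\sfGt$-invariants. The same argument in the $(u',v')$-chart--after setting $v'=e_1$, the constraint $\Re(u'(v')^{-1})=0$ reduces to $u_1'=0$--delivers $\calB_2\cong\Real\times[0,\infty)$ with representative $(x_1'+x_2' e_3,e_1)$.

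For the gluing map I take a non-zero representative $(u,v)=(x_1+x_2 e_3,e_1)\in\calB_1$ and apply (\ref{Phimnmap}). The key observation is that $u$ lies in the complex line $\Cpx_{e_3}:=\Span\set{1,e_3}$, a commutative and associative subalgebra of $\Cay$, and a direct check from $e_1 e_3=-e_2$ and $e_3 e_1=e_2$ yields the anti-commutation identity $u\,e_1=e_1\,\bar u$. Invoking Artin's theorem for the pair $(u,e_1)$, this propagates to $u^m e_1 u^n=(u^m\bar u^n)\,e_1=u^k\abs{u}^{2n}\,e_1$ where $k=m-n$. Dividing by $\abs{u}^{m+n}=\abs{u}$ and using $k+2n-1=0$ gives
\begin{equation*}
v'=(u/\abs{u})^k e_1,\qquad u'=u/\abs{u}^2=(x_1+x_2 e_3)/\abs{u}^2\in\Cpx_{e_3}.
\end{equation*}
The $\sfGt$-invariants $\Re u'=x_1/\abs{u}^2$ and $\abs{\Im u'}=x_2/\abs{u}^2$ are then immediate, so by the normal form already established in $\calB_2$ the orbit of $(u',v')$ is represented by $(x/\abs{x}^2,e_1)$ with $x=x_1+x_2 e_3$, which is exactly the claimed gluing $\Phi([x,e_1])=[x/\abs{x}^2,e_1]$.

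The step I expect to be the main obstacle is the octonion bookkeeping that produces $u^m e_1 u^n=u^k\abs{u}^{2n}e_1$ for arbitrary odd $k$; one has to handle the two sign cases for $m$ and $n$ uniformly, relying only on the associativity of the subalgebra generated by $u$ and $e_1$ (Artin's theorem) together with the commutativity inside $\Cpx_{e_3}$, and then verify that the exponents $m,n,k$ combine cleanly after the normalization $m+n=1$. Once this identity is in hand, the $\sfGt$-invariant computation for $u'$ and the check that the two normal forms agree under $\Phi$ are formal.
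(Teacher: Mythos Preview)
Your argument is correct and follows the same two-step strategy as the paper: normalize in each chart via the transitive $\sfGt$-action on $\sph^6$ together with the residual $\SU(3)$-action on $\sph^5\subset\Span_\Real\set{e_2,\ldots,e_7}$, then compute the gluing on representatives. Your handling of the gluing is a mild streamlining---you invoke the orbit invariants $\Re u'$ and $\abs{\Im u'}$ to read off the $\calB_2$-normal form directly, whereas the paper writes $u=r(\cos\theta+\sin\theta\,e_3)$ and exhibits an explicit $\tau\in\SU(3)_3$ (the stabilizer of $e_3$) that rotates the $\set{e_1,e_2}$-plane to send $v'=\cos(k\theta)e_1+\sin(k\theta)e_2$ back to $e_1$ while fixing $u'$---but the underlying computation and content are the same.
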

\begin{proof}
On the chart with coordinates $(u,v)$ we have $\Re v = 0$ and $\abs v =1$, i.e., $v\in \sph^6 \subset \Im \Cay$. Write $u = u_0 + u_1$ with $u_1 \in \Im \Cay$. Then the condition $\Re(u v) = 0$ is equivalent to $\bk{u_1, v} = 0$. Since $\sfGt$ acts transitively on $\sph^6$, there exists some $\sigma_1 \in \sfGt$ such that $e_1 = \sigma_1(v)$, and then $\sigma_1(u) = u_0 + \sigma_1(u_1)$ with $\sigma_1(u_1) \in \Im \Cay$. The left multiplication of $e_1$ induces a complex structure on the space $\Cpx^3 = \Span_{\Real}\set{e_2, \cdots, e_7}$. The isotropy subgroup at $e_1 \in \sph^6$ is $\SU(3)$. Note that we also have $\bk{e_1, \sigma_1(u_1)} = 0$. Since $\SU(3)$ acts transitively on $\sph^5 \subset \Cpx^3$, there is $\sigma_2 \in \SU(3) \subset \sfGt$ such that $\sigma_2(\sigma_1(u_1)) = \abs{u_1} e_3$. Let $\sigma = \sigma_2 \sigma_1 \in \sfGt$, then we have $\sigma(u, v) = (u_0 + \abs{\Im u} e_3, e_1)$.   

Next we consider the chart with coordinates $(u', v')$. First, we have $v' \in \sph^6 \subset \Im \Cay$. Write $u' = u'_0 + u'_1$ with $u'_1 \in \Im \Cay$. Then the condition $\Re(u'(v')^{-1}) = 0$ is equivalent to $\Re (\bar u' v') = 0$, i.e., $\bk{u'_1, v'} = 0$. Similar to the argument for $(u,v)$, there is a $\tau_1 \in \sfGt$ such that $e_1 = \tau_1(v')$ and $\bk{e_1, \tau_1(u_1')} = 0$. Then there is a $\tau_2 \in \SU(3)$, the isotropy subgroup of $e_1$ in $\sfGt$, such that $\tau_2(\tau_1(u_1')) = \abs{u_1'}e_3$. It follows that $\tau(u',v') = (u'_0 + \abs{\Im u'}e_3, e_1)$ with $\tau = \tau_2\tau_1 \in \sfGt$.

Now we consider the transition map $\Phi_{m,n}$. Let $(u, v) = \sigma (x_1 + x_2 e_3, e_1)$ with $(x_1, x_2)\in \Real \times [0, \infty)$, i.e., $u=\sigma(x_1+ x_2e_3)$ and $v = \sigma(e_1)$. Write 
\[
x_1 + x_2 e_3 = r \left(\cos \theta + \sin \theta e_3\right)
\]
for some $\theta \in [0, \pi]$. Then the image $(u', v') = \Phi_{m,n}(u,v)$ is given by 
\begin{eqnarray*}
u' & = & \sigma \left(\frac{x_1+ x_2 e_3}{x_1^2 + x_2^2}\right) = \sigma \left(\frac{\cos\theta + \sin \theta e_3}{r}\right) \\
v' & = & \sigma \left(\frac{(x_1+x_2 e_3)^m e_1 (x_1+x_2 e_3)^n}{\abs{x_1 + x_2e_3}}\right) = \sigma \left\{\left(\cos (k \theta) + \sin (k \theta) e_3\right) e_1\right\},
\end{eqnarray*}
i.e., $(u',v')$ is in the orbit of $(r^{-1}(\cos \theta + \sin \theta e_3), (\cos (k\theta) + \sin (k\theta)e_3)e_1)$. Since all orbits have a point with $(y_1 + y_2 e_3, e_1)$ with $y_2 \geq 0$, it follows that there exists a $\tau \in \sfGt$ such that 
\begin{eqnarray*}
\frac{1}{r}\left(\cos \theta + \sin \theta e_3\right) & = & \tau(y_1 + y_2 e_3) \\
\cos(k\theta) e_1 + \sin(k\theta) e_2 & = & \tau(e_1).
\end{eqnarray*}
In fact we may choose $\tau$ such that it fixes $e_3$, and rotates in $\set{e_1, e_2}$-plane by the second equation above and the space spanned by $\set{e_4, \ldots, e_7}$. Such $\tau$ exists in another copy of $\SU(3)$, which is the isotropy subgroup of $e_3$. Denote $[u,v]$ and $[u',v']$, the $\sfGt$-orbits in coordinate charts $(u,v)$ and $(u',v')$ respectively. In a summary, under the transition map $\Phi_{m,n}$, we have
\[
\Phi_{m,n} \left(\left[r (\cos \theta + \sin \theta e_3), e_1\right]\right) = \left[\frac{1}{r}(\cos \theta + \sin \theta e_3), e_1\right]
\]
which defines the map $\Phi$. This finishes the proof.
\end{proof}

Next, we consider the $\SO(2)$-action on the orbit space $\calB^2$. Recall 
\begin{equation}\label{eqn:gammaab2}
\gamma = \gamma(a,b) = 
\begin{pmatrix}
a & b \\
- b & a
\end{pmatrix}
\end{equation}
with $a^2 + b^2 = 1$. 

\begin{prop}\label{prop:gammaactionX}
Let $\gamma$ be an element in $\SO(2)$ as in (\ref{eqn:gammaab2}). Then $\gamma$ acts on the $\sfGt$-orbit space $\calB^2 = \calB_1 \sqcup_\Phi \calB_2$ as follows.
\begin{enumerate}[(1)]
\item If $b = 0$, then we have 
\begin{eqnarray*}
\gamma \star (u, v) & = & (u, \mathrm{sgn}(a) v) \\
\gamma \star (u', v') & = & (u', \mathrm{sgn}(a) v')
\end{eqnarray*}
on the $(u,v)$- and $(u', v')$-coordinate charts. 

\item If $b \ne 0$, then we have
\begin{eqnarray*}
\gamma \star [u_1 + u_2 e_3,e_1] & = & \left[-\frac{a}{b} + \frac{a- b u_1}{b \left((a- b u_1)^2 + b^2 u_2^2\right)} + \frac{u_2}{(a- b u_1)^2 + b^2 u_2^2} e_3,e_1\right] \\
\gamma \star [u_1' + u_2' e_3,e_1] & = & \left[\frac{a}{b} - \frac{a+ b u_1'}{b \left((a+ b u_1')^2 + b^2 (u'_2)^2\right)} + \frac{u_2'}{(a+ b u_1')^2 + b^2 (u_2')^2} e_3, e_1\right]
\end{eqnarray*}
where $[u_1 + u_2 e_3, e_1] \in \calB_1$ and $[u_1' + u_2' e_3, e_1]\in \calB_2$.
\end{enumerate}
\end{prop}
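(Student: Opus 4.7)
The plan is to compute $\gamma \star (u,v)$ on an orbit representative $(u_1+u_2e_3,\,e_1)$ furnished by Proposition \ref{prop:G2orbitS13} using formulas (\ref{eqn:gammau})--(\ref{eqn:gammav}), and then to bring the result back to the canonical form by acting with a suitable element of $\sfGt$ that fixes $e_3$. Case (1) with $b=0$ is immediate: $\gamma \star u = au\cdot a^{-1} = u$, and since $m+n=1$ we have $\gamma \star v = a^{m+n}v/\abs{a}^{m+n} = \mathrm{sgn}(a)\,v$; the $(u',v')$ formulas are identical.

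For case (2) with $b\neq 0$, the crucial observation is that $u=u_1+u_2e_3$ and $-bu+a$ both lie in the commutative, associative subalgebra $\Span_{\Real}\set{1,e_3}\cong \Cpx$. Rationalizing,
\[
\gamma \star u \;=\; \frac{(au+b)(a-b\bar u)}{\abs{a-bu}^2},
\]
and a direct expansion using $u\bar u = u_1^2+u_2^2$ together with $a^2+b^2=1$ produces both components displayed in the statement. The algebraic identity converting the $\Real$-part into $-a/b+(a-bu_1)/\bigl(b\bigl[(a-bu_1)^2+b^2u_2^2\bigr]\bigr)$ is routine.

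To compute $\gamma \star v$ with $v=e_1$, write $-bu+a = c+de_3$ with $c=a-bu_1$, $d=-bu_2$, and put it in polar form $re^{e_3\theta}$, so that $(-bu+a)^m = r^m(\cos(m\theta)+\sin(m\theta)\,e_3)$. Sandwiching $e_1$ between the two powers requires associativity, which is legitimate because the entire calculation takes place inside the quaternion subalgebra $\Span\set{1,e_1,e_2,e_3}$. Using $e_3e_1=e_2$, $e_2e_3=e_1$, together with $m+n=1$, $m-n=k$, and angle-addition, the product collapses to
\[
(-bu+a)^m\, e_1\, (-bu+a)^n \;=\; r\bigl[\cos(k\theta)\,e_1 + \sin(k\theta)\,e_2\bigr],
\]
and division by $\abs{-bu+a}^{m+n}=r$ gives $\gamma \star v = \cos(k\theta)\,e_1 + \sin(k\theta)\,e_2$.

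Finally, I would act by an element $\tau \in \SU(3)_3 \subset \sfGt$ that fixes $e_3$ (hence preserves $\gamma \star u$) and rotates $\cos(k\theta)\,e_1 + \sin(k\theta)\,e_2 \mapsto e_1$; such $\tau$ exists because the $\U(2)\subset\SU(3)_3$ described in Section 2.2 acts on $\Span\set{e_1,e_2}$ by the full circle of rotations. The positivity $x_2\geq 0$ is automatic from $u_2\geq 0$. The $(u',v')$-chart is handled identically with $-bu+a$ replaced by $a+b\bar u'$; the extra conjugation introduces the sign changes visible in the stated formula for $\gamma\star u'$, and the polar computation again forces $\gamma\star v'$ into $\Span\set{e_1,e_2}$. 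I expect the main obstacle to be careful bookkeeping of signs between the two charts and being explicit at each step that the sandwich products live inside an associative subalgebra, so that the calculation is rigorous despite the non-associativity of $\Cay$.
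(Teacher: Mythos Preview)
Your proposal is correct and follows essentially the same approach as the paper: compute $\gamma\star u$ by rationalizing in the commutative subalgebra $\Span_\Real\{1,e_3\}$, compute $\gamma\star v$ via the polar form of $-bu+a$, and then normalize the second slot back to $e_1$ by an element of $\SU(3)_3\subset\sfGt$ fixing $e_3$. The only cosmetic difference is that the paper writes the polar form for $a-b\bar u$ rather than $a-bu$ and handles the sandwich $(a-bu)^m e_1(a-bu)^n$ by pushing $e_1$ to one side via $xe_1=e_1\bar x$ for $x\in\Span_\Real\{1,e_3\}$, whereas you invoke associativity of the quaternion subalgebra $\Span_\Real\{1,e_1,e_2,e_3\}$ directly; these are equivalent manipulations.
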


\begin{proof}
Take $(u, v)\in \sph^{13}_k$ through the orbit $[u_1 + u_2 e_3, e_1] \in \calB_1$ and write $a - b\bar u = r(\cos \theta + \sin \theta e_3)$, i.e., 
\[
\left\{
\begin{array}{rcl}
a - b u_1 & = & r\cos \theta \\
b u_2 & = & r \sin \theta. 
\end{array}
\right.
\]
then
\begin{equation}\label{eqn:u1u2rtheta}
\left\{
\begin{array}{rcl}
u_1 & = & \dfrac{1}{b}(a-r\cos \theta) \\
& & \\
u_2 & = & \dfrac{r}{b} \sin \theta. 
\end{array}
\right.
\end{equation}

\begin{claim} We have 
\begin{eqnarray*}
\gamma\star u & = & - \frac{a}{b} + \frac{1}{r b}\left(\cos \theta + \sin \theta e_3 \right) \\
\gamma \star v & = & e_1 \left(\cos(k \theta)  + \sin (k \theta) e_3\right). 
\end{eqnarray*}
\end{claim}
It follows from a straightforward computation. We have
\begin{eqnarray*}
\gamma\star u & = & (a u + b)(a- bu)^{-1} \\
& = & (a u_1 + b + a u_2 e_3)\frac{a-b \bar u}{\abs{a- b u}^2} \\
& = & \frac{(a^2 + b^2)\cos\theta - a r + (a^2 + b^2)\sin\theta e_3}{rb} \\
& = & \frac{-r a + \cos\theta + \sin \theta e_3}{rb}.
\end{eqnarray*}
This gives the first formula. Then we have
\begin{eqnarray*}
\gamma\star v & = & \frac{(a- b u)^{m} e_1 (a- bu)^n}{\abs{a- bu}} \\
& = & e_1 \frac{(a- b\bar u)^m(a- bu)^{1-m}}{r} \\
& = & e_1 \frac{(a- b\bar u)^m(a - b\bar u )^{m-1}}{r^{2m-1}}  \\
& = & e_1 \left(\cos (2m-1)\theta + e_3 \sin (2m-1) \theta\right).
\end{eqnarray*}
This gives the second formula, as $2m-1 = k$. This finishes the proof of the claim. 

Next we derive the action of $\gamma$ on chart with coordinates $(u',v')$. Take $(u', v')\in \sph^{13}_k$, through the orbit $[u'_1 + u'_2 e_3, e_1] \in \calB_2$ with $u'_2 \geq 0$. Write $a + b\bar u' = r(\cos t +\sin t e_3)$, i.e.,
\[
\left\{
\begin{array}{rcl}
a + b u'_1 & = & r\cos t \\
- bu'_2 & = & r \sin t.
\end{array}
\right.
\]
A straightforward computation shows the following:
\begin{eqnarray*}
\gamma\star u' & = & \frac{a}{b} - \frac{1}{r b}\left(\cos t + e_3 \sin t\right) \\
\gamma\star v' & = & e_1 \left(\cos (kt) - \sin (kt) e_3\right).
\end{eqnarray*}
From a similar argument in Proposition \ref{prop:G2orbitS13}, both $(\gamma \star u, \gamma\star v)$ and $(\gamma \star u, e_1)$ are in the same $\sfGt$-orbit. This also holds for $(u', v')$ and thus we finish the proof. 
\end{proof}

\begin{rem}
(a) One can see that the action of $\gamma$ on $\calB = \calB_1\sqcup_\Phi \calB_2$ is compatible with the map $\Phi$. Restrict $\Phi$ to the first component. Take $u = u_1 + u_2 e_3$ and $u' = \Phi(u) = u_1' + u_2' e_3$ with 
\begin{eqnarray*}
u_1' & = & \frac{u_1}{u_1^2 + u_2^2} \\
u_2' & = & \frac{u_2}{u_1^2 + u_2^2}.
\end{eqnarray*} 
Then a direct calculation shows that $\Phi(\gamma \star u) = \gamma \star u'$.

(b) Restricted to the $u$ and $u'$-component, the action of $\gamma$ is the M\"{o}bius transformation of the upper half plane with the identification 
\[
u_1 + u_2 e_3 \sim u_1 + i u_2.
\]
The unique fixed point is $e_3$ with $(u_1, u_2) = (0, 1)$.  The action of $\SO(2)$ is by isometries with respect to the hyperbolic metric
\[
ds^2 = \frac{du_1^2 + d u_2^2}{u_2^2},
\]
so that we can identify the orbit spaces as the line segment $\set{u_2 e_3 : 0 \leq u_2 \leq 1}$.
\end{rem}

\begin{thm}\label{thm:isotropyS13}
The cohomogeneity one action of $\sfG = \SO(2) \times \sfGt$ on $\sph^{13}_k$ has the following isotropy subgroups:
\begin{enumerate}[(1)]
\item At $(e_3, e_1)$ in the $(u,v)$-coordinate chart, the isotropy subgroup is 
\begin{equation*}
K = \SO(2) \SU(2) = \left(e^{i \theta}, \diag\set{\begin{pmatrix}\cos k\theta & \sin k\theta \\ - \sin k\theta & \cos k\theta \end{pmatrix}, 1, A}\right)
\]
where $A$ is a $4\times 4$-matrix.
\item At $(u_1, e_1)$ in the $(u,v)$-coordinate chart with $u_1 \in \Real$, or $(0,e_1)$ in the $(u',v')$-coordinate chart, the isotropy subgroup is 
\begin{equation*}
L = \sfO(6) \cap \sfGt = \left(\det B, \left(\begin{smallmatrix} \det B & 0 \\ 0 & B\end{smallmatrix}\right)\right) 
\end{equation*}
where $B \in \sfO(6) \cap \sfGt$.
\item At $(u_1+ u_2 e_3, e_1)$ in the $(u,v)$-coordinate chart with $(u_1, u_2) \in \Real \times (0, \infty) - (0, 1)$, the isotropy subgroup is 
\begin{equation*}
H = \Zeit_2 \cdot \SU(2) = \left(\eps, \diag\set{\eps, \eps, 1, A}\right)
\end{equation*}
where $\eps = \pm 1$ and $A$ is a $4\times 4$-matrix. 
\end{enumerate}
 \end{thm}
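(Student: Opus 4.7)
The strategy is to lift the isotropy calculation to the two-dimensional $\sfGt$-orbit space $\calB^2$ of Proposition~\ref{prop:G2orbitS13} and use the M\"obius description of Proposition~\ref{prop:gammaactionX}. For each base point $x$, I would proceed in two steps: first identify the subgroup $\Gamma_x \subset \SO(2)$ that stabilizes the $\sfGt$-orbit of $x$ in $\calB^2$, and then, for each $\gamma \in \Gamma_x$, pin down the coset of elements $g \in \sfGt$ that return $\gamma \star x$ to $x$. The isotropy subgroup in $\sfG$ is the collection of all such pairs $(\gamma, g)$.

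For case~(1), $(0,1)$ is the unique M\"obius fixed point in the upper half-plane, so $\Gamma_x = \SO(2)$. Writing $a - b\bar u = e^{\alpha e_3}$ at $u = e_3$, the formulas (\ref{eqn:gammau})--(\ref{eqn:gammav}) together with $m + n = 1$, $m - n = k$, and the quaternion relation $e_3 e_1 = - e_1 e_3$ yield $\gamma \star (e_3, e_1) = (e_3,\, e_1 e^{k\alpha e_3})$. The compensating $g \in \sfGt$ must therefore fix $e_3$ and act on $\Span_\Real\{e_1, e_2\}$ by the rotation carrying $e_1 e^{k\alpha e_3}$ back to $e_1$. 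Invoking the diagonal embedding $\U(2) \subset \SU(3)_3$ recalled in Section~2.2, $g$ takes the block form $\diag\{R_{k\theta}, 1, A\}$ with $A \in \SU(2)$ acting freely on the remaining $\Span_\Real\{e_4, \ldots, e_7\}$; this produces $K = \SO(2) \SU(2)$.

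For cases~(2) and~(3), the kernel $\{\pm I\} \subset \SO(2)$ of the M\"obius action fixes every $\sfGt$-orbit in $\calB^2$, and since neither $(u_1, 0)$ nor $(u_1, u_2) \ne (0, 1)$ is fixed by the induced effective $\SO(2)/\Zeit_2$-action, one has $\Gamma_x = \{\pm I\}$ in both cases. In case~(2) the contribution from $\gamma = I$ is $\mathrm{Stab}_{\sfGt}(e_1) = \SU(3) = \SO(6) \cap \sfGt$, while $\gamma = -I$ forces $g(e_1) = -e_1$, yielding the other component of $\sfO(6) \cap \sfGt$, so $L = \sfO(6) \cap \sfGt$. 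In case~(3), $\gamma = I$ contributes the joint stabilizer $\SU(3) \cap \SU(3)_3 = \SU(2)$ of $e_1$ and $e_3$, while $\gamma = -I$ requires $g$ to fix $e_3$ (since $u_2 \ne 0$) and send $e_1 \mapsto -e_1$, producing the $\eps = -1$ component of $\Zeit_2 \cdot \SU(2)$. A dimension count ($\dim K = 4$, $\dim L = 8$, $\dim H = 3$, giving orbit codimensions $2$, $6$, and $1$ in $\sph^{13}_k$) then confirms these exhaust the isotropy types of a cohomogeneity-one action.

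The main technical obstacle is the octonion manipulation in case~(1): pinning down the angle in $\gamma \star v$ as precisely $k\alpha$ rather than some other multiple of $\alpha$, which hinges on collapsing $(a - b\bar u)^m v (a - b\bar u)^n$ to $(a - b\bar u)^{m - n} \cdot v$ via the anticommutation $(a + b e_3) e_1 = e_1 (a - b e_3)$ in the associative subalgebra generated by $e_1$ and $e_3$. Once this angle identification is secured, matching the resulting pair $(e^{i\theta}, g_\theta)$ to the description of $\Km$ in Remark~\ref{rem:M13dgroups} is direct, and this compatibility is precisely the input needed for Theorem~\ref{thm:introBrieskornvarieties}.
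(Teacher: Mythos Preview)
Your proposal is correct and follows essentially the same route as the paper: both reduce to the $\sfGt$-orbit space $\calB^2$ via Proposition~\ref{prop:G2orbitS13}, use the M\"obius description of Proposition~\ref{prop:gammaactionX} to determine which $\gamma\in\SO(2)$ can appear, and then solve for the compensating $g\in\sfGt$ via the explicit octonion identity $(a-be_3)e_1 = e_1(a+be_3)$ in the associative subalgebra $\langle 1,e_1,e_2,e_3\rangle$. The only organisational difference is that the paper argues contrapositively---it assumes an isotropy element with $b\neq 0$ and deduces that the base point must be $(e_3,e_1)$---whereas you fix the base point first and compute $\Gamma_x$ directly; the underlying computations are identical.
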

 
 \begin{proof}
 Suppose $q = g(p)$ for some $g \in \sfGt$. Then the isotropy subgroups have the following relation:
\[
\sfG_q = \set{(\gamma, h) \in \SO(2) \times \sfGt : (\gamma, g^{-1}h g ) \in \sfG_p},
\]
i.e., $g^{-1} \sfG_q g = \sfG_p$. So it is sufficient to just consider the isotropy subgroups on $\mathcal B^2$. From Proposition \ref{prop:G2orbitS13}, we only need to consider the $(u,v)$-coordinate chart, and the point $(0, e_1)$ in the $(u',v')$-coordinate chart. 
 
We first consider the isotropy subgroup at $(u, v) = (u_1 + u_2 e_3, e_1) \in \sph^{13}_k$. Choose an element $(\gamma^{-1}, h)$, with $\gamma = \gamma(a,b) \in \SO(2)$ given by equation (\ref{eqn:gammaab2}) and $h \in \sfGt$. Suppose that $(\gamma^{-1}, h) \in \sfG_{(u,v)}$, we have 
\[
h(u, v) = \gamma\star (u,v).
\] 
In the first case we assume that the isotropy subgroup contains an element $(\gamma^{-1}, h)$ with $b \ne 0$. Write $(u_1, u_2)$ in terms of $(r,\theta)$ as in equations (\ref{eqn:u1u2rtheta}). Following Proposition \ref{prop:gammaactionX}, we have
\begin{eqnarray*}
- \frac{a}{b} + \frac{1}{rb}(\cos\theta + \sin \theta e_3) & = & \frac{a}{b} - \frac{r}{b}\cos \theta +h(e_3) \frac{r}{b} \sin \theta \\
e_1\cos(k \theta) - e_2 \sin (k \theta) & = & h(e_1).
\end{eqnarray*}
Since $\Re h(e_3) = 0$, these two equations above are equivalent to the following equations: 
\begin{eqnarray*}
2a & = & \left( r + \frac 1 r\right)\cos \theta \\
(r \sin \theta) h (e_3) & = & \frac{\sin \theta}{r} e_3 \\
h(e_1) & = & e_1 \cos(k\theta) - e_2 \sin (k \theta).
\end{eqnarray*}
If $\sin \theta = 0$, then $\cos \theta = \pm 1$. From the first equation above we have, either $a \geq 1$ or $a \leq -1$. In either case, we have $b = 0$ that contradicts our assumption that $b \ne 0$. So we have $\sin \theta \ne 0$, and thus the second equation implies that $h(e_3) = r^{-2} e_3$.  It follows that $r = 1$ and $a = \cos \theta$ from the first equation. From equations (\ref{eqn:u1u2rtheta}) we have $u_1 = 0$, $u_2 = 1$ and $b = \sin \theta$. In this case $h$ is the rotation in the plane $\Span_\Real \set{e_1, e_2}$ while fixing $e_3$.  The left multiplication of $e_3$ defines a complex structure on the vector space $\Span_\Real\set{e_1, e_2, e_4, \ldots, e_7}$ and
\[
h
\begin{pmatrix}
e_1 \\
e_2
\end{pmatrix} = 
\begin{pmatrix}
\cos k \theta & - \sin k \theta \\
\sin k \theta & \cos k \theta
\end{pmatrix}
\begin{pmatrix}
e_1 \\
e_2
\end{pmatrix}.
\]
So we have $(u, v) = (e_3, e_1)$, $\gamma = R(\theta)$ and $h|_{\set{e_1, e_2}}= R(-k \theta)$. It follows that $(\gamma^{-1}, h)\in K$ in Case (1). 

In the second case we assume that $b = 0$. Suppose that $a = 1$, then we have $\gamma \star (u,v) = (u,v)$. It follows that $h(u, v) = (u,v)$, i.e.,
\begin{eqnarray*}
h(u_1 + u_2 e_3) & = & u_1 + u_2 e_3 \\
h(e_1) & = & e_1.
\end{eqnarray*}
It follows that $h\in \SU(3)$ if $u_2 = 0$. If $u_2 \ne 0$, then we have $h(e_3) = e_3$, and so $h \in \SU(2)$. Now suppose that $a = -1$ and we have $\gamma \star (u, v) = (u, -v)$. It follows that $h(u, v) = (u, -v)$, i.e., 
\begin{eqnarray*}
h(u_1 + u_2 e_3) & = & u_1 + u_2 e_3 \\
h(e_1) & = & - e_1.
\end{eqnarray*}  
If $u_2 = 0$, then we have $h(e_1) = - e_1$. If $u_2 \ne 0$, then we have $h(e_3) = e_3$ and $h(e_1) = -e_1$. It follows that the isotropy subgroup at $(u_1, e_1)$ is $L$ as in Case (2), and the identity component is 
\[
L_0 = \set{(1, A) : A \in \SU(3) \subset \sfGt}.
\]
The isotropy subgroup at $(u_1 + u_2 e_3, e_1)$ with $u_2 >0$ and $(u_1, u_2) \ne (0,1)$ is $H$ as in Case (3).

Next we consider the isotropy subgroup at $(u',v') = (0, e_1)$. Suppose that $(\gamma^{-1}, h) \in \sfG_{(0, e_1)}$ with $\gamma$ being given by (\ref{eqn:gammaab2}). If $b \ne 0$, then from Proposition \ref{prop:gammaactionX}, we have 
\[
0 = \frac{a}{b} - \frac{1}{ab}
\]
i.e., $a^2 = 1$ and thus $b = 0$. So we have $b = 0$ and $\gamma\star (0,e_1) = (0, \mathrm{sgn}(a) e_1)$. It follows that $h(e_1) = \mathrm{sgn}(a) e_1$. So we have $(\gamma^{-1}, h) \in L$ as in Case (2). This finishes the proof.
\end{proof}

\begin{cor}\label{cor:isotropyP13}
The cohomogeneity one action of $\sfG = \SO(2) \times \sfGt$ on $P^{13}_k$ has the following isotropy subgroups
\begin{eqnarray*}
\bar K & = & \Zeit_2 \cdot\SO(2) \SU(2) = \left(e^{i\theta}, \diag\set{\eps \begin{pmatrix}\cos k\theta & \sin k\theta \\ - \sin k\theta & \cos k\theta \end{pmatrix}, 1, A}\right) \\
& & \text{where } \eps = \pm 1 \text{ and } A \text{ is a } 4\times 4\text{-matrix}, \\
\bar L & = & \Zeit_2 \times \left(\sfO(6) \cap \sfGt\right) = \left(\eps, \diag\set{\det B, B}\right) \\
& & \text{where } \eps = \pm 1 \text{ and } B \in \sfO(6) \cap \sfGt, \\
\bar H & = & \Zeit_2 \times \left(\Zeit_2 \cdot \SU(2)\right)  =\left(\eps_1, \diag\set{ \eps_2, \eps_2, 1, A}\right) \\
& & \text{where } \eps_{1,2} = \pm 1 \text{ and } A \text{ is a } 4\times 4\text{-matrix}.
\end{eqnarray*}
\end{cor}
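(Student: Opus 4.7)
The plan is to realize the involution $T$ as the action of a single central element $\tau \in \sfG$, and then read off the isotropy subgroups on $P^{13}_k$ as order-two extensions of those in Theorem \ref{thm:isotropyS13}. Everything else is bookkeeping.

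First I would identify $T$ with the action of $\tau := (\gamma(-1,0), \Id) \in \SO(2) \times \sfGt$. Setting $a=-1$, $b=0$ in Proposition \ref{prop:gammaactionX}(1) gives $\tau \star (u,v) = (u,-v)$ on the $(u,v)$-chart and $\tau\star(u',v')=(u',-v')$ on the other chart, which is exactly the defining formula \eqref{eqn:involutionT} of $T$. Note that $\tau$ lies in the center of $\sfG$, and since $v, v' \in \sph^6$ never vanish on $\sph^{13}_k$, we have $\tau \notin \sfG_p$ for every $p \in \sph^{13}_k$.

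Next I would use this to compute the stabilizer of an arbitrary class $[p] = \set{p, T(p)} \in P^{13}_k$. Since $\tau$ implements $T$ and commutes with all of $\sfG$,
\[
\bar\sfG_{[p]} \;=\; \set{g\in \sfG : g\cdot p \in \set{p,\tau\cdot p}} \;=\; \sfG_p \;\sqcup\; \tau\cdot\sfG_p \;\cong\; \Zeit_2\times \sfG_p.
\]
Applying this to the isotropy types $H$ and $L$ of Theorem \ref{thm:isotropyS13} yields $\bar H$ and $\bar L$ immediately in the displayed $\Zeit_2\times(\cdot)$ form, with $\eps_1$ (respectively $\eps$) parametrizing the $\langle\tau\rangle$ factor.

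For $\bar K$ the abstract answer $\langle\tau\rangle\cdot K$ still needs to be matched to the stated parametrization, and this is the one place that uses the hypothesis that $k$ is odd. A typical element of $\tau K$ has the form $\bigl(-e^{i\theta},\diag\set{R(k\theta),1,A}\bigr)$, where $R(k\theta)$ denotes the $2\times 2$ rotation matrix. Substituting $\theta \mapsto \theta+\pi$ and using $R(k\pi)=(-1)^k I = -I$ rewrites this as $\bigl(e^{i\theta'},\diag\set{-R(k\theta'),1,A}\bigr)$. The two components $K$ and $\tau K$ therefore combine into a single family indexed by $\theta \in \Real$ and $\eps = \pm 1$, with $\eps$ in front of the rotation block absorbing the coset representative $\tau$; this is precisely the description of $\bar K$ in the statement. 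I do not expect any serious obstacle: the only nontrivial input is the identification $T = \tau\text{-action}$, after which the three extensions are straightforward to read off.
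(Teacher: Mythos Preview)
Your argument is correct, and it is precisely the deduction the paper leaves implicit: Corollary~\ref{cor:isotropyP13} is stated in the paper without proof, as an immediate consequence of Theorem~\ref{thm:isotropyS13}. Identifying $T$ with the action of the central element $\tau=(-1,\Id)\in\sfG$ and concluding $\bar\sfG_{[p]}=\sfG_p\sqcup\tau\,\sfG_p$ is exactly the intended step, and your reparametrization $\theta\mapsto\theta+\pi$ (using $k$ odd) to put $\tau K$ into the stated form of $\bar K$ is the only point that needs any care.
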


Now we show the equivariant diffeomorphisms between $\sph^{13}_k$ and $M^{13}_k$, and between $P^{13}_k$ and $N_k^{13}$.
\begin{proof}[Proof of Theorem \ref{thm:introBrieskornvarieties}]
From the general structure result, see for example \cite[Section 1]{GroveWilkingZiller}, two cohomogeneity one manifolds with the same isotropy subgroups are equivariantly diffeomorphic. In our case, let $\mathbb D^2$ and $\mathbb D^6$ be disks with $\partial \mathbb D^2 = \sph^1 = \Km/\sfH$ and $\partial \mathbb D^6 = \sph^5 = \Kp/\sfH$ with $\mathsf{K}^{\pm}$ and $\sfH$ being given in Theorem \ref{thm:groupsM13d}. Then $M^{13}_k$ is equivariantly diffeomorphic to the union of the two disk bundles glued together along the boundary $\sfG/\sfH$:
\[
B^{13} = \sfG \times_{\Km} \mathbb D^2 \cup_{\sfG/\sfH} \sfG \times_{\Kp} \mathbb D^6.
\]
From Theorem \ref{thm:isotropyS13}, the sphere $\sph^{13}_k$ is also equivariantly diffeomorphic to the $B^{13}$ above. It follows that $\sph^{13}_k$ is equivariantly diffeomorphic to $M^{13}_k$. The equivariant diffeomorphism between $P^{13}_k$ and $N_k^{13}$ follows from a similar argument and Corollaries \ref{cor:groupsN13d}, \ref{cor:isotropyP13}. This finishes the proof. 
\end{proof}

In the last part of this section we determine the Weyl group $\mathsf W$, which will be used to determine the invariant metrics on $M^{13}_k$.

\begin{prop}\label{prop:Weylgroupelements}
The Weyl group of the cohomogeneity one action of $\sfG = \SO(2) \times \sfGt$ on $M^{13}_k$ is $\mathsf W \simeq \Zeit_2 \ltimes \Zeit_4$, which is generated by $w_- \in \Km$ and $w_+ \in \Kp$:
\begin{eqnarray*}
w_- & = & (i, A) \quad \text{with }  A =\diag\set{\begin{pmatrix}0 & \eps \\ - \eps & 0\end{pmatrix}, 1, \begin{pmatrix}0 & 0 & 0 & -\eps \\ 0& 1 & 0 & 0\\ 0 & 0 & 1 & 0 \\ \eps & 0 & 0 & 0\end{pmatrix}} \\
w_+ & = & \left(1, \diag\set{1, -1, -1, 1, 1, -1, -1}\right),
\end{eqnarray*}
where $\eps = 1$ for $k = 1, 5, \ldots$, and $\eps = -1$ for $k = 3, 7, \ldots$.
\end{prop}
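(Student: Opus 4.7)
The plan is to follow the standard cohomogeneity-one recipe: the Weyl group $\mathsf W$ is the subgroup of $N_{\sfG}(\sfH)/\sfH$ generated by two classes $w_-\sfH$ and $w_+\sfH$, where each $w_\pm\in\mathsf K^\pm$ is chosen so that $w_\pm\notin\sfH$, $w_\pm^2\in\sfH$, and $w_\pm\sfH$ realizes the antipodal involution on the normal sphere $\mathsf K^\pm/\sfH$. By Theorem~\ref{thm:groupsM13d}, these normal spheres are $\sph^1$ and $\sph^5$ respectively, so both $w_-\sfH$ and $w_+\sfH$ are involutions in $N(\sfH)/\sfH$. The proposition then reduces to exhibiting explicit representatives and computing the order of $(w_+w_-)\sfH$, which will pin down $\mathsf W$ as a dihedral group.

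To build $w_-$, I would parametrize $\Km$ as in Remark~\ref{rem:M13dgroups}: an element is $(e^{i\theta},\diag\{R(k\theta),1,A\})$ with $A\in\U(2)\subset\SU(3)_3$ of complex determinant $e^{-jk\theta}$, where $j=e_3$. The subgroup $\sfH\cap\Km$ corresponds to $\theta\in\{0,\pi\}$, so $\Km/\sfH$ is a circle $\Real/\pi\Zeit$ whose antipodal class is $\theta=\pi/2$; this forces $e^{i\theta}=i$ and $R(k\pi/2)=\bigl(\begin{smallmatrix}0&\eps\\-\eps&0\end{smallmatrix}\bigr)$ with $\eps=+1$ if $k\equiv 1\pmod 4$ and $\eps=-1$ if $k\equiv 3\pmod 4$. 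The $4\times 4$ block $M$ in the statement fixes $e_5$ and $e_6$ and rotates the pair $(e_4,e_7)$ by $\pm\pi/2$; using the pairing $je_4=e_7$ induced by the $e_3$-complex structure on $\Span_\Real\{e_4,e_7,e_6,e_5\}$, one sees $M$ represents the complex matrix $\diag\{-\eps i,1\}\in\U(2)$ with complex determinant $-\eps i=e^{-jk\pi/2}$, as required. Membership of the full block-diagonal $A$ in $\sfGt$ is then verified by checking $A(e_i)A(e_j)=A(e_ie_j)$ on the octonion triples recorded in Section~2.

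For $w_+$, I look inside $\Kp=\sfO(6)\cap\sfGt$ with $\Kp/\sfH\cong\sph^5$ for an involution realizing the antipode on the unit normal to $\Bp$. The candidate $B=\diag\{-1,-1,1,1,-1,-1\}$ on $\Span_\Real\{e_2,\dots,e_7\}$ is an orthogonal involution of determinant~$1$, and a short check on each Fano-plane triple confirms $B\in\sfGt$. Consequently $w_+=(1,\diag\{1,B\})$ lies in $\Kp$ with $w_+^2=e\in\sfH$, and the fact that $B$ normalizes the $\SU(2)$-block of $\sfH$ under conjugation gives $w_+\in N(\sfH)$.

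The last step is to identify the group. A direct block computation gives $w_-^2=(-1,\diag\{-1,-1,1,-1,1,1,-1\})$, whose $4\times 4$ block represents the complex matrix $\diag\{-1,1\}$ of complex determinant $-1$; this places $w_-^2$ in $\sfH$, so $w_-\sfH$ is an involution and $w_-$ normalizes $\sfH$. The crucial computation is that of $(w_+w_-)^2$: a direct $7\times 7$ multiplication yields $(w_+w_-)^2=(-1,I_7)$, which is \emph{not} in $\sfH$ since its second coordinate is $I_7$ rather than of the form $\diag\{-1,-1,1,\cdot\}$; together with $(w_+w_-)^4=e$ this forces the order of $(w_+w_-)\sfH$ in $N(\sfH)/\sfH$ to be exactly~$4$. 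Thus $\mathsf W=\langle w_-\sfH,w_+\sfH\rangle$ is generated by two involutions whose product has order~$4$, which is precisely the presentation of $\Zeit_2\ltimes\Zeit_4$. The main obstacle I expect is the bookkeeping between the real $4\times 4$ blocks and the complex matrices they represent in $\U(2)\subset\SU(3)_3$: the complex structure $j=e_3$ on $\Span_\Real\{e_4,e_7,e_6,e_5\}$ must be tracked carefully throughout to recognize which real blocks have the correct complex determinant to lie in $\sfH$.
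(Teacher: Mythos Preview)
Your proposal is correct and follows essentially the same route as the paper: verify $w_\pm\in\mathsf K^\pm$ using the $e_3$-complex structure on $\SU(3)_3$, check $w_\pm^2\in\sfH$, and compute that $(w_+w_-)^2=(-1,I_7)\notin\sfH$ while $(w_+w_-)^4\in\sfH$ to identify $\mathsf W$ as dihedral of order~$8$. The only addition is that you spell out why $\theta=\pi/2$ is forced and flag the real/complex bookkeeping explicitly, which the paper leaves implicit.
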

\begin{proof}
First, it is easy to check that $w_+ \in \Kp$ and neither of $w_\pm$ is in $\sfH$. We show that $w_- \in \Km$. It is sufficient to prove that $A\in \sfGt$. Since $e^{i\theta} = i$, we may assume that $\theta = \frac{\pi}{2}$. It follows that $\eps = \sin k \theta$. Let $j$ be the complex structure induced by the left multiplication of $e_3$. So we have 
\[
A|_{\Span_\Real\set{e_1, e_2}} = j^k, \quad A|_{\Span_\Real\set{e_4, e_7}} = -j^k\quad  \text{and} \quad A|_{\Span_\Real\set{e_6, e_5}} = 1,
\]
i.e., $A$ embeds in $\U(2) \subset \SU(3)_3$ with the image $\diag\set{j^k, -j^k, 1}$ and so $A\in \sfGt$.

We check that each $w_{\pm}$ is of order $2$: 
\[
w_-^2 = \left(-1, \diag\set{-1,-1, 1, -1, 1,1, -1}\right) \in \sfH
\]
and 
\[
w_+^2 = \left(1, I_7 \right) \in \sfH.
\]
This shows that $w_\pm$ are generators of the Weyl group. Next we determine the order of $w_- w_+$. Write $w_- w_+ = (i, B)$, and we have
\[
B = \diag\set{
\begin{pmatrix}
0 & -\eps \\
-\eps & 0 
\end{pmatrix}, -1, 
\begin{pmatrix}
0 & 0 & 0 & \eps \\
0 & 1 & 0 & 0 \\
0 & 0 & -1 & 0 \\
\eps & 0 & 0 & 0 
\end{pmatrix}
}.
\]
It follows that $B^2 =I_7$, the identity matrix. So we have $(w_+w_-)^2 = (-1,I_7)\not\in \sfH$, but $(w_+ w_-)^4 = (1,I_7)\in \sfH$, i.e., $\mathsf W =\bk{w_-, w_+} \simeq \Zeit_2 \ltimes \Zeit_4$ which finishes the proof.
\end{proof}

\medskip{}
\section{The $\sfG$-invariant metrics on $M^{13}_k$}

In this section we determine all $\sfG$ invariant metric on $M^{13}_k$ with $\sfG = \SO(2) \times \sfGt$. See Proposition \ref{prop:invariantmetric} for the invariant metrics on the regular part, and Lemma \ref{lem:smoothmetric} for the conditions to ensure the smoothness of the metrics at the singular orbits. 

Throughout this section, we assume that $k$ is an odd integer. We refer to \cite[Section 1]{GroveZillerposRicci} for the description of invariant metrics on a general cohomogeneity one manifold. 

\smallskip

Recall that $c(t)$ is a normal minimal geodesic between two singular orbits $\Bm$ and $\Bp$; with $c(0) = p_-\in \Bm$, and $c(L) = p_+\in \Bp$. On the regular part of $M^{13}_k$, the metric is determined by 
\[
g_{c(t)} = dt^2 + g_t
\]
where $g_t$ is a family of homogeneous metrics on $\sfG /\sfH$. By means of Killing vector fields, we identify the tangent space of $\sfG/\sfH$ at $c(t)$, $t\in (0, L)$ with an $\Ad_{\sfH}$-invariant complement $\frakp$ of the isotropy subalgebra $\frakh$ of $\sfH$ in $\frakg$, and the metric $g_t$ is identified with an $\Ad_{\sfH}$-invariant inner product on $\frakp$.

In the following, we introduce a few subspaces in $\frakp$ such that the invariant metric has a block-diagonal form. The Lie algebra $\frakg_2$ of $\sfGt$ has the following embedding in $\so(7)$:
\begin{equation}
X = 
\begin{pmatrix}
0 & x_1 - y_1 & x_2 + y_2 & - x_5  + y_5 & - x_6 - y_6 & x_3 + y_3 & x_4 - y_4 \\
-x_1 + y_1 & 0 & b & y_4 & y_3 & y_6 & y_5 \\
-x_2 -y_2 & -b & 0 & x_3 & x_4 & x_5 & x_6 \\
x_5 -y_5  & -y_4  & -x_3 & 0  & a & y_2 & y_1 \\
x_6 +y_6 & -y_3 & -x_4 & -a & 0 & x_1 & x_2 \\
-x_3 - y_3 & -y_6 & -x_5 & -y_2 & -x_1 &  0 & a+ b \\
-x_4+y_4 & -y_5 & -x_6 & -y_1 & -x_2 &  -a-b & 0 
\end{pmatrix}
\end{equation}
for $a, b, x_1, \ldots, x_6, y_1, \ldots, y_6 \in \Real$. We choose the following bi-invariant inner product on $\frakg_2$:
\begin{eqnarray*}
Q_0(X, X) & = & -\frac{1}{4}\Tr X^2 \\
& = & a^2 + ab + b^2 + \sum_{i=1}^6 \left(x_i^2 + y_i^2\right) - x_1 y_1 + x_2 y_2 + x_3 y_3 -x_4 y_4 - x_5 y_5 +x_6 y_6.
\end{eqnarray*}
The Lie algebra $\frakh$ of $\sfH = \Zeit_2 \cdot \SU(2)$ has the following form
\begin{equation}\label{eqn:Liealgh}
\frakh = \set{
\begin{pmatrix}
O_{3\times 3} & O_{3\times 4} \\
O_{4\times 3} & A _{4\times 4}
\end{pmatrix}
\quad \text{with}\quad A = 
\begin{pmatrix}
 0 & a & -x_2 & x_1 \\
-a & 0 & x_1 & x_2 \\
x_2 & - x_1 & 0 & a \\
-x_1 & -x_2 & -a & 0 
\end{pmatrix}
}
\end{equation}
where $O_{p\times q}$ is the zero matrix. The $Q_0$-orthogonal complement $\frakm$ of $\frakh$ is given by 
\[
\frakm = \set{X \in \frakg_2 : b + 2a = 0, x_1 + y_1 = 0, \text{ and } x_2 - y_2 = 0}.
\]
Note that, $\frakh \subset \so(4)$ is the standard embedding of $\su(2) \subset \so(4)$: 
\[
A_1 + i A_2  \mapsto 
\begin{pmatrix}
A_1 & - A_2 \\
A_2 & A_1 
\end{pmatrix}.
\]
Denote the following matrices in $\frakm$: 
\[
U_0 = \diag\left\{
\begin{pmatrix}
0 & -2 & 0 \\
2 & 0 & 0 \\
0 & 0 & 0 
\end{pmatrix}, 
\begin{pmatrix}
0 & 0 & 0 & 1 \\
0 & 0 & -1 & 0 \\
0 & 1 & 0 & 0 \\
-1 & 0 & 0 & 0 
\end{pmatrix}
\right\},
\]
\[
U_1 = \diag\left\{
\begin{pmatrix}
0 & 0 & 0 \\
0 & 0 & 2 \\
0 & -2 & 0 
\end{pmatrix}, 
\begin{pmatrix}
0 & -1 & 0 & 0 \\
1 & 0 & 0 & 0 \\
0 & 0 & 0 & 1 \\
0 & 0 & -1 & 0 
\end{pmatrix}
\right\},
\]
and 
\[
U_2 = \diag\left\{
\begin{pmatrix}
0 & 0 & 2 \\
0 & 0 & 0 \\
-2 & 0 & 0 
\end{pmatrix}, 
\begin{pmatrix}
0 & 0 & 1 & 0 \\
0 & 0 & 0 & 1 \\
-1 & 0 & 0 & 0 \\
0 & -1 & 0 & 0 
\end{pmatrix}
\right\}.
\]
Then we have
\[
Q_0(U_i, U_i) = 3 \quad \text{and}\quad Q_0(U_i, U_j) = 0 \quad \text{for}\quad 0\leq i \ne j \leq 2.
\]
Denote $\frakm$'s subspaces 
\[
\frakm_1 = \set{
\begin{pmatrix}
0 & 0 & 0 & 0 & 0 & 0 & 0 \\
0 & 0 & 0 & x_4 & -x_3 & -x_6 & x_5 \\
0 & 0 & 0 & x_3 & x_4 & x_5 & x_6 \\
0 & -x_4 & -x_3 &  0 & 0 & 0 & 0 \\
0 & x_3  & -x_4 & 0 & 0 & 0 & 0 \\
0 & x_6 & -x_5  & 0 & 0 & 0 & 0 \\
0 & -x_5 & -x_6 & 0 & 0 & 0 & 0
\end{pmatrix}
=x_3 E_1 + x_4 E_2 + x_5 E_3 + x_6 E_4 },
\]
and 
\[
\frakm_2 = \left\{
\frac{1}{\sqrt 3}
\begin{pmatrix}
0 & 0 & 0 & -2x_5 & -2x_6 & 2x_3 & 2x_4 \\
0 & 0 & 0 & -x_4 & x_3 & x_6 & -x_5 \\
0 & 0 & 0 & x_3 & x_4 & x_5 & x_6 \\
2x_5 & x_4 & -x_3 &  0 & 0 & 0 & 0 \\
2x_6 & -x_3  & -x_4 & 0 & 0 & 0 & 0 \\
-2x_3 & -x_6 & -x_5  & 0 & 0 & 0 & 0 \\
-2x_4 & x_5 & -x_6 & 0 & 0 & 0 & 0
\end{pmatrix}   = x_3 F_1 + x_4 F_2 + x_5 F_3 + x_6 F_4\right\}.
\]
Note that our matrices of $E_1, \ldots E_4$ and $F_1, \ldots, F_4$ are different from those in \cite{GVWZ}. We have $Q_0(E_p, F_q) = 0$ for $1\leq p, q \leq 4$, and 
\begin{eqnarray*}
Q_0(E_i, E_i) = 1 & & Q_0(E_i, E_j) = 0 \\
Q_0(F_i, F_i) = 1 & & Q_0(F_i, F_j) = 0
\end{eqnarray*}
for $1\leq i\ne j \leq 4$.

Next, we consider the Lie algebra $\frakg = \so(2) \oplus \frakg_2$ with the following bi-invariant inner product
\begin{equation}
Q(s E_{12} + X, s E_{12} + X) = \frac{3k^2}{4}s^2 + Q_0(X, X)
\end{equation}
where $s E_{12} \in \so(2)$, and $E_{12}$ is the skew-symmetric $2\times 2$-matrix with $(2,1)$-entry $1$. So we have
\begin{equation}
\frakp = \so(2) + \frakm.
\end{equation}
Let
\begin{eqnarray}
X_1 = \left(\frac{2}{k} E_{12} + U_0\right)/\sqrt{6}, & &  X_2 = \left(\frac{2}{k} E_{12} - U_0\right)/\sqrt{6} \label{eqn:X1X2} \\
Y_1 = U_1/\sqrt 3, & & Y_2 = U_2/\sqrt 3. \label{eqn:Y1Y2}
\end{eqnarray}
It follows that $\set{X_1, X_2, Y_1, Y_2, E_1, \ldots, E_4, F_1, \ldots, F_4}$ is a $Q$-orthonormal basis of $\frakp$, and 
\begin{eqnarray*}
\frakk^- = \frakh + \Span_\Real\set{X_1}, & & T_{c(0)} B_- \simeq \frakm_1 + \frakm_2 + \Span_\Real\set{X_2, Y_1, Y_2} \\
\frakk^+ = \frakh + \frakm_1 + \Span_\Real \set{Y_1}, & & T_{c(L)} B_+ \simeq \frakm_2 + \Span_\Real\set{X_1, X_2, Y_2}. 
\end{eqnarray*}

From the explicit forms of the generators of the Weyl group $\mathsf W$ in Proposition \ref{prop:Weylgroupelements}, we determine the action of $\mathsf W$ on each subspace in $\frakp$. 
\begin{lem}
The action of the Weyl group $\mathsf W$ is given by the following:
\begin{enumerate}
\item $\Ad_{w_-}$ acts on $\frakp$ via
\[
X_1 \mapsto X_1, \quad X_2 \mapsto X_2, \quad Y_1 \mapsto \eps Y_2, \quad Y_2 \mapsto - \eps Y_1
\]
and
\begin{eqnarray*}
E_1 \mapsto \frac{\eps}{2} E_4 + \frac{\sqrt 3 \eps}{2} F_4, & & F_1 \mapsto \frac{\sqrt 3 \eps}{2} E_4 - \frac \eps 2 F_4 \\
E_2 \mapsto \frac 1 2 E_2 + \frac{\sqrt {3}}{2} F_2,  & & F_2 \mapsto \frac{\sqrt 3}{2}E_2 - \frac 1 2 F_2 \\
E_3 \mapsto \frac 1 2 E_3 + \frac{\sqrt{3}}{2} F_3, & & F_3 \mapsto \frac{\sqrt 3}{2}E_3 - \frac 1 2 F_3 \\
E_4 \mapsto -\frac \eps 2 E_1 - \frac{\sqrt{3}\eps}{2} F_1, & & F_4 \mapsto -\frac{\sqrt 3\eps}{2}E_1 + \frac \eps 2 F_1.
\end{eqnarray*}
\item $\Ad_{w_+}$ acts on $\frakp$ via
\[
X_1\mapsto X_2, \quad X_2 \mapsto X_1, \quad Y_1 \mapsto Y_1, \quad Y_2 \mapsto - Y_2 
\]
and 
\begin{eqnarray*}
& E_1 \mapsto - E_1, \quad E_2 \mapsto - E_2, \quad E_3 \mapsto E_3, \quad E_4 \mapsto E_4; & \\
& F_1 \mapsto - F_1, \quad F_2 \mapsto - F_2, \quad F_3 \mapsto F_3, \quad F_4 \mapsto F_4. & 
\end{eqnarray*}
\end{enumerate}
\end{lem}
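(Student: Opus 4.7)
The plan is a direct matrix computation of $\Ad_{w_\pm}$ on the $Q$-orthonormal basis $\{X_1, X_2, Y_1, Y_2, E_1,\ldots,E_4, F_1,\ldots,F_4\}$ of $\frakp$. Since $\sfG = \SO(2)\times\sfGt$ is a direct product and $\SO(2)$ is abelian, $\Ad_{(\gamma,A)}$ fixes the $\so(2)$-summand pointwise and acts as matrix conjugation $X\mapsto AXA^{t}$ on $\frakg_2 \subset \so(7)$. In particular the $(2/k)E_{12}$ pieces of $X_1, X_2$ are preserved by both $\Ad_{w_\pm}$, and the stated formulas for $X_1, X_2, Y_1, Y_2$ will follow by linearity from their definitions once the action on the $\frakm$-basis $\{U_0, U_1, U_2, E_i, F_i\}$ is computed.

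For $w_+$ the $\sfGt$-component $A_+ = \diag(1,-1,-1,1,1,-1,-1)$ is diagonal, hence $(A_+ X A_+^{t})_{ij} = (A_+)_{ii}(A_+)_{jj} X_{ij}$; an entry $X_{ij}$ changes sign exactly when one of $i,j$ lies in $\{2,3,6,7\}$ and the other does not. Reading this sign pattern off the explicit matrices of $U_0, U_1, U_2, E_i, F_i$ immediately gives $U_0 \mapsto -U_0$, $U_1 \mapsto U_1$, $U_2 \mapsto -U_2$, together with $E_1, E_2, F_1, F_2 \mapsto -E_1, -E_2, -F_1, -F_2$ and $E_3, E_4, F_3, F_4$ fixed, yielding item (2) of the lemma.

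For $w_-$ the $\sfGt$-component has block form $A_- = \diag(B_1, 1, B_2)$ with $B_1 = \bigl(\begin{smallmatrix}0 & \eps\\ -\eps & 0\end{smallmatrix}\bigr)$ and $B_2$ the stated $4\times 4$ orthogonal matrix. Conjugation by $A_-$ respects the coarse block decomposition of $\so(7)$ along $\Span_\Real\{e_1,e_2\}\oplus\Span_\Real\{e_3\}\oplus\Span_\Real\{e_4,\ldots,e_7\}$: on the $\{e_1,e_2\}$-entries $B_1$ acts as a signed $\pi/2$-rotation, while on $\{e_4,\ldots,e_7\}$ the block $B_2$ swaps $e_4 \leftrightarrow \pm\eps e_7$ and fixes $e_5, e_6$. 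I would compute $A_- U_i A_-^{t}$, $A_- E_i A_-^{t}$, $A_- F_i A_-^{t}$ by block matrix multiplication and re-expand in the $Q$-orthonormal basis. The coefficients $\tfrac12$ and $\tfrac{\sqrt 3}{2}$ arise because each $F_i$ carries a prefactor $1/\sqrt 3$ together with coefficient-$2$ entries in the first row and first column, so that when $A_-$ shuffles the $e_1$- and $e_2$-rows those entries recombine with the ordinary $E_i$-entries into $\tfrac12 E_\bullet + \tfrac{\sqrt 3}{2} F_\bullet$.

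The main technical obstacle is precisely this bookkeeping for $w_-$: the subspaces $\frakm_1 = \Span_\Real\{E_1,\ldots, E_4\}$ and $\frakm_2 = \Span_\Real\{F_1,\ldots, F_4\}$ are not separately $\Ad_{w_-}$-invariant, only their direct sum is, so for each image one must extract both an $E$-component and an $F$-component by inspecting specific matrix entries, using that the $F_i$-basis is distinguished by the $2/\sqrt 3$-scaled entries in the first row and first column. The $\eps$-dependence traces entirely to the sign entries of $B_1$ and the extremal columns of $B_2$ and is consistent with the identification $\eps = \sin(k\pi/2)$ already made in the proof of Proposition \ref{prop:Weylgroupelements}.
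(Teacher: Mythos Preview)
Your approach is correct and is precisely the routine computation the paper has in mind; in fact the paper does not write out a proof of this lemma at all, stating it immediately after the sentence ``From the explicit forms of the generators of the Weyl group $\mathsf W$ in Proposition~\ref{prop:Weylgroupelements}, we determine the action of $\mathsf W$ on each subspace in $\frakp$.'' Your observation that $\Ad_{(\gamma,A)}$ is trivial on $\so(2)$ and equals $X\mapsto AXA^t$ on $\frakg_2\subset\so(7)$, together with the diagonal-sign argument for $w_+$ and the block-conjugation bookkeeping for $w_-$, is exactly the intended verification.
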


We determine the irreducible summands of the $\Ad_\sfH$ representation on $\frakp$ in the following

\begin{lem}\label{lem:AdHrep}
The adjoint representation of $\sfH$ on the space $\frakp$ is determined by the following:
\begin{enumerate}
\item  For the connected component $\sfH_0 = \SU(2) \subset \sfH$, the representation of $\Ad_{\sfH_0}$ on 
\[
\frakp = \Span_\Real \set{X_1, X_2, Y_1, Y_2} \oplus \frakm_1\oplus \frakm_2
\] 
is given by
\[
1\oplus 1\oplus 1\oplus 1\oplus[\mu_2]_\Real \oplus [\mu_2]_\Real 
\]
where $1$ is the trivial representation, and $[\mu_2]_\Real$ is the standard representation of $\SU(2)$ on $\Cpx^2 = \Real^4$.
\item The element 
\[
\tau = \left(-1,\diag\set{-1,-1,1, -1, 1,1,-1}\right) \in \sfH 
\]
acts trivially on $\Span_\Real \set{X_1, X_2, E_2,E_3, F_2, F_3}$, \\
and maps $v$ to $-v$ on $\Span_\Real \set{Y_1, Y_2, E_1, E_4, F_1, F_4}$.
\end{enumerate}
\end{lem}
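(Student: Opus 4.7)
The plan is to verify both parts of the lemma by direct matrix computations with the explicit generators of $\frakh$, the element $\tau$, and the $U_i, E_i, F_i$ introduced above.

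For part (2), the element $\tau = (-1, D)$ with $D = \diag\{-1, -1, 1, -1, 1, 1, -1\}$ lies in the $\sfGt$-factor, so it acts trivially on the $\so(2)$ summand. On $\frakg_2 \subset \so(7)$ it acts by conjugation, and since $D$ is diagonal with $D = D^{-1}$, the formula reduces to $(\Ad_D X)_{ij} = D_{ii} D_{jj} X_{ij}$. One then reads off the non-zero positions of each of $U_0, U_1, U_2$ and $E_1, \ldots, F_4$ from the definitions and tabulates the signs $D_{ii}D_{jj}$. For instance $U_0$ has its non-zero entries at $(1,2), (4,7), (5,6)$ (modulo skew-symmetry), each giving $+1$, so $\Ad_\tau U_0 = U_0$; whereas $U_1$ has entries at $(2,3), (4,5), (6,7)$, each giving $-1$, so $\Ad_\tau U_1 = -U_1$. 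The same sign-by-sign check on the $E_i$ and $F_i$, combined with formulas (\ref{eqn:X1X2})--(\ref{eqn:Y1Y2}), yields the $\pm 1$-eigenspace decomposition in the statement.

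For part (1), the connected component $\sfH_0 = \{(1, \diag(I_3, A)) : A \in \SO(4) \cap \sfGt\}$ acts trivially on $\so(2)$ and on the upper-left $3\times 3$ block of any matrix in $\frakg_2 \subset \so(7)$, so the whole $\sfH_0$-action on $\frakp$ is controlled by what it does on the lower $4 \times 4$ block and on the off-diagonal $3 \times 4$ blocks. Under the standard decomposition $\so(4) = \su(2)_+ \oplus \su(2)_-$ into self-dual and anti-self-dual $2$-forms, direct inspection of (\ref{eqn:Liealgh}) shows that $\frakh_0$ embeds as $\su(2)_+$, while the lower blocks of $U_0, U_1, U_2$ all lie in $\su(2)_-$. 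Since the two summands commute inside $\so(4)$, the $U_i$ commute with $\frakh_0$, giving $X_1, X_2, Y_1, Y_2$ as four trivial $\sfH_0$-summands. For $\frakm_\alpha$ ($\alpha = 1, 2$), introduce the linear isomorphism $\psi_\alpha : \frakm_\alpha \to \Real^4$ reading off the coefficients $(x_3, x_4, x_5, x_6)$. Writing $h = \diag(I_3, A)$, the upper-right $3 \times 4$ block of an element of $\frakm_\alpha$ transforms by right multiplication by $A^{-1}$; consistency of the resulting matrix with the $\frakm_\alpha$ form requires $A$ to commute with certain auxiliary $4 \times 4$ matrices (a single $J$ for $\frakm_1$, and two matrices $K_1, K_2$ for $\frakm_2$) that encode rows $1$ and $2$ as linear functions of the row-$3$ vector $(x_3, x_4, x_5, x_6)$. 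Inspection shows that each of these auxiliaries is anti-self-dual, so lies in $\su(2)_-$ and commutes with $A \in \su(2)_+$. Consequently $\psi_\alpha$ is $\sfH_0$-equivariant and identifies $\frakm_\alpha$ with the defining $\SU(2) \subset \SO(4)$ representation on $\Real^4 \cong [\mu_2]_\Real$.

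The main obstacle is the bookkeeping in the last step: one must single out the correct auxiliary matrices $J$ (for $\frakm_1$) and $K_1, K_2$ (for $\frakm_2$) from the row-$1$ and row-$2$ formulas and verify their anti-self-duality. Conceptually, the whole lemma is driven by the single observation that $\frakh_0$ embeds as the self-dual copy $\SU(2)_+$ inside $\so(4)$: everything in $\frakp$ other than $\frakm_1 \oplus \frakm_2$ is either forced to be fixed by $\sfH_0$ for trivial block-structure reasons or lies in the commuting anti-self-dual $\su(2)_-$, while the two copies of $[\mu_2]_\Real$ arise precisely from the off-diagonal blocks paired by the anti-self-dual auxiliaries.
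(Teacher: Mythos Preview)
Your proposal is correct. For part (2) you do exactly what the paper does (a direct sign check on matrix entries), so there is nothing to compare there.

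For part (1) your route is genuinely different from the paper's. The paper simply writes down the explicit parametrization of $h\in\sfH_0$ as a $7\times 7$ matrix and states that one checks $\Ad_h U_j=U_j$ and $\Ad_h(E_i,F_i)^T=h^T(E_i,F_i)^T$ by direct multiplication. You instead invoke the structural decomposition $\so(4)=\su(2)_+\oplus\su(2)_-$, identify $\frakh_0$ with the self-dual summand $\su(2)_+$, and observe that the lower $4\times4$ blocks of $U_0,U_1,U_2$ as well as the auxiliary matrices $J,K_1,K_2$ encoding the row relations in $\frakm_1,\frakm_2$ are all anti-self-dual and hence commute with $\sfH_0$. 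Your argument explains \emph{why} the computation works rather than just carrying it out, and it makes transparent that the action on each $\frakm_\alpha$ is the restriction to $\SU(2)_+\subset\SO(4)$ of the defining representation on $\Real^4$, i.e.\ $[\mu_2]_\Real$. The paper's approach is shorter to write down but opaque; yours requires a few more lines (identifying $J=-J_1^-$, $K_1=2J_2^-$, $K_2=J_1^-$) but yields a cleaner picture and would adapt more easily if one wanted to handle related isotropy groups.
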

\begin{proof}
First note that the adjoint representation of $\sfH$ is trivial on the line spanned by $E_{12} \in \so(2)$. Recall that from the embedding (\ref{eqn:Liealgh}) of the Lie algebras, the identification between $\SU(2)$ and $\sfH_0=\SU(2) \subset  \SU(3) \subset\SO(7)$ is given by 
\[
\begin{pmatrix}
\alpha & \beta \\
-\bar{\beta} & \bar \alpha 
\end{pmatrix}
\mapsto 
h = \diag\set{
I_3, 
\begin{pmatrix}
h_1 & - h_2 \\
h_2 & h_1
\end{pmatrix}}
\]
with 
\[
h_1 = 
\begin{pmatrix}
a_1 & b_1 \\
-b_1 & a_1
\end{pmatrix}\quad \text{and}\quad 
h_2 = 
\begin{pmatrix}
-b_2 & a_2 \\
a_2 & b_2
\end{pmatrix}
\]
where $\alpha = a_1 + i a_2$, $\beta = b_1 + i b_2$ and the complex structure is induced by the left multiplication of ${e_3}$. It is straightforward to check that $\Ad_h U_j=U_j$ for $j=0,1,2$ and the following relations
\begin{eqnarray*}
\Ad_h 
\begin{pmatrix}
E_1 & F_1\\
E_2 & F_2\\
E_3 & F_3\\
E_4 & F_4
\end{pmatrix} = h^T
\begin{pmatrix}
E_1 & F_1\\
E_2 & F_2\\
E_3 & F_3\\
E_4 & F_4
\end{pmatrix}.
\end{eqnarray*}
This shows the first part. The statement in the second part follows by a straightforward computation.
\end{proof}

Denote $X^*$, the Killing vector field generated by $X \in \frakp$ along $c(t)$. Using the fixed background inner product $Q$ on $\frakp$, the invariant metric $g_t$, $t \in (0, L)$ can be written as 
\[
g_t(X^*, Y^*) = Q(P(t) X, Y)
\]
for any $X, Y \in \frakp$, where $P(t)$ is a family of positive definite $\Ad_{\sfH}$-invariant endomorphisms of $\frakp$. From Lemma \ref{lem:AdHrep} and Schur's Lemma in representation theory, we have

\begin{prop}\label{prop:invariantmetric}
Restricted to the regular part $M_k^{13}-\pset{\Bp\cup\Bm}$, a $\sfG$-invariant metric $g = dt^2+g_t$ is determined by the following inner products on the tangent space of $T_{c(t)} \sfG/\sfH\cong\frakp$ ($0< t< L$):
\begin{eqnarray*}
g_t(X_1, X_1) = f_1^2(t), & g_t(X_2, X_2) = f_2^2(t), & g_t(X_1, X_2) = f_{12}(t) \\
g_t(Y_1, Y_1) = h_1^2(t), & g_t(Y_2, Y_2) = h_2^2(t), & g_t(Y_1, Y_2) = h_{12}(t)  
\end{eqnarray*}
\begin{eqnarray*}
& g_t(E_i, E_i) = a_{1}^2(t), \quad g_t(F_i, F_i) = a_{2}^2(t), \quad g_t(E_i, F_i) = a_{12}(t) & \\
& g_t(E_1, F_4) = g_t(E_3, F_2) = b_{12}(t), \quad g_t(E_2, F_3) = g_t(E_4, F_1) = - b_{12}(t), & 
\end{eqnarray*}
with $i=1,\ldots, 4$, and the other components vanish. Here the $10$ functions are smooth on $(0,L)$ and $g_t$ is positive definite for any $t\in (0,L)$.
\end{prop}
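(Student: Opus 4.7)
The plan is to apply Schur's lemma, building on the explicit decomposition of $\frakp$ as an $\Ad_\sfH$-module in Lemma \ref{lem:AdHrep}. On the regular part, $g_t$ is determined by an $\Ad_\sfH$-invariant symmetric bilinear form on $\frakp$, so it suffices to write down the general such form; smoothness of the $10$ coefficient functions on $(0, L)$ will then follow from the smoothness of $g$ in the fixed $Q$-orthonormal frame $\{X_1, X_2, Y_1, Y_2, E_i, F_i\}$.

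First I would refine Lemma \ref{lem:AdHrep} to an $\sfH$-isotypic decomposition. Since $\sfH_0 = \SU(2)$ acts trivially on $\Span_\Real\{X_1, X_2, Y_1, Y_2\}$, the $\tau$-eigenspace decomposition splits this piece into the trivial $\sfH$-isotypic component $V_+ := \Span_\Real\{X_1, X_2\}$ (on which $\tau = +\Id$) and the sign $\sfH$-isotypic component $V_- := \Span_\Real\{Y_1, Y_2\}$ (on which $\tau = -\Id$); these are non-isomorphic as $\sfH$-modules. On $\frakm_1 \oplus \frakm_2$, each summand is preserved by $\tau$, and one checks directly that the map $E_i \mapsto F_i$ is both $\SU(2)$-equivariant and $\tau$-equivariant, so it realises an $\sfH$-isomorphism $\frakm_1 \cong \frakm_2$ of a single irreducible $\sfH$-module, distinct from $V_\pm$.

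Schur's lemma then forces any $\Ad_\sfH$-invariant symmetric bilinear form on $\frakp$ to be block diagonal across the three isotypic components $V_+$, $V_-$, $\frakm_1 \oplus \frakm_2$, which immediately kills the mixed terms $g_t(X_i, Y_j)$, $g_t(X_i, \frakm_r)$, and $g_t(Y_i, \frakm_r)$. Within $V_+$ and within $V_-$, each a sum of two copies of a one-dimensional $\sfH$-module, the invariant inner products are arbitrary positive definite symmetric $2 \times 2$ forms, producing the six parameters $f_1^2, f_2^2, f_{12}$ and $h_1^2, h_2^2, h_{12}$.

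The main step is the $\frakm_1 \oplus \frakm_2$ block. Both summands are the quaternionic-type irreducible $[\mu_2]_\Real$, so $\mathrm{End}_{\SU(2)}([\mu_2]_\Real) \cong \Qua$ and the space of $\SU(2)$-invariant symmetric bilinear forms on $\frakm_1 \oplus \frakm_2$ is $1 + 1 + 4 = 6$-dimensional: one scalar multiple of the $Q_0$-restriction on each $\frakm_i$, plus a $\Qua$-worth of cross intertwiners. Imposing $\tau$-invariance preserves the diagonal parameters $a_1^2$ and $a_2^2$ intact, while forcing $g_t(E_i, F_j) = 0$ unless the $\tau$-eigenvalues of $E_i$ and $F_j$ coincide. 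I would carry out the final cut concretely by writing the four quaternionic generators of $\mathrm{Hom}_{\SU(2)}(\frakm_1, \frakm_2)$ as explicit $4 \times 4$ matrices in the given bases --- the standard map $E_i \mapsto F_i$ together with its three imaginary twists --- and extracting the two-dimensional subspace that commutes with the diagonal $\tau$-action $\diag(-1, 1, 1, -1)$. The outcome should match the asserted pattern $g_t(E_i, F_i) = a_{12}$ and $g_t(E_1, F_4) = g_t(E_3, F_2) = b_{12} = -g_t(E_2, F_3) = -g_t(E_4, F_1)$. The main obstacle is the sign bookkeeping in this quaternionic intertwiner computation, in particular verifying that exactly these two combinations survive and that the $b_{12}$ entries pick up the indicated asymmetric signs.
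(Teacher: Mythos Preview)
Your proposal is correct and follows essentially the same approach as the paper, which simply states that the proposition follows ``from Lemma \ref{lem:AdHrep} and Schur's Lemma in representation theory'' without further elaboration. Your write-up supplies the details that the paper omits: the split of the trivial $\sfH_0$-summand into the $\tau$-eigenspaces $V_\pm$, the identification $\frakm_1\cong\frakm_2$ as $\sfH$-modules via $E_i\mapsto F_i$, and the reduction of the four-dimensional quaternionic intertwiner space $\mathrm{Hom}_{\sfH_0}(\frakm_1,\frakm_2)$ to the two-dimensional subspace commuting with $D_0=\diag(-1,1,1,-1)$, yielding exactly the $a_{12}$ and $b_{12}$ patterns.
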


\begin{rem}\label{rem:M13deveninvariantmetrics}
If $k$ is an even integer, from Remark \ref{rem:M13deven}, the principal isotropy subgroup is $\sfH = \Zeit_2 \times \SU(2)$, and the adjoint representation of $\sfH$ on $\frakp$ is given by Case (1) in Lemma \ref{lem:AdHrep}. It follows that for an invariant metric on the regular part, we need $10$ smooth functions to describe the inner products on $\Span_\Real\set{X_1, X_2, Y_1, Y_2}$, other $6$ smooth functions for the inner products on $\frakm_1 \oplus \frakm_2$.
\end{rem}
\begin{rem}
If the group is $\SO(2) \times \SO(7)$, there are $6$ functions involved for an invariant metric on $M_k^{13}$, see \cite{BackHsiang} and \cite{GVWZ}. 
\end{rem}

There are further conditions required such that the metric $dt^2 + g_t$ can be extended smoothly to singular orbits at $t=0$ and $L$. These conditions are given in \cite{BackHsiang} and \cite{GVWZ} when the group is $\SO(2) \times \SO(7)$. For our case with $\sfG = \SO(2) \times \sfGt$, we have 
\begin{lem}\label{lem:smoothmetric}
Assume $k\geq 3$ odd. To ensure the metric $g = dt^2 + g_t$ can be smoothly extended to the singular orbits at $t = 0$ and $L$, the following conditions hold.
\begin{eqnarray*}
& f_1(0) = 0, \quad f_{12}(0) = 0, \quad h_1(0) = h_2(0) > 0, \quad h_{12}(0) = 0, & \\
& a_{12}(0) = \frac{\sqrt{3}}{2}\left(a_1^2(0) - a_2^2(0)\right), \quad b_{12}(0) =0, & \\
& f_1'(0) = \frac{4}{k \sqrt 6}, \quad f_{12}'(0) = 0, \quad f_2'(0) = 0, \quad h_1'(0) = h_2'(0) = h_{12}'(0) = 0, & \\
& a_1'(0) = a_2'(0) = a_{12}'(0) = b_{12}'(0) = 0; &  
\end{eqnarray*}
and
\begin{eqnarray*}
& h_2(L) = a_2(L) >  0, \quad h_2'(L) = a_2'(L) = 0, \quad h_1(L) = a_1(L) = 0.& 
\end{eqnarray*}
\end{lem}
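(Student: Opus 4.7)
The plan is to apply the standard smoothness criterion for invariant metrics on cohomogeneity-one manifolds, developed in \cite{BackHsiang} and refined in \cite{GVWZ}: in the tubular neighborhood $\sfG\times_{\Km}\mathbb D^2$ of $\Bm$, and similarly $\sfG\times_{\Kp}\mathbb D^6$ of $\Bp$, the metric $dt^2+g_t$ extends smoothly to the singular orbit if and only if (i) each of the ten component functions of $g_t$ extends smoothly through the endpoints of $[0,L]$, and (ii) their values and first derivatives at $t=0,L$ satisfy the algebraic conditions imposed by the slice representations of $\Km$ and $\Kp$. The lemma lists these conditions.

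I would first handle $\Bm$ at $t=0$. The circle $\Km/\sfH\cong\sph^1$ acts on the normal slice $\mathbb D^2$ by rotations, with generator $X_1=(\frac{2}{k}E_{12}+U_0)/\sqrt 6\in\frakp$. Combining this with the Davis formula $z_0\mapsto e^{2i\theta}z_0$ for the $\SO(2)$-action on the slice coordinate, one finds that $\exp(sX_1)$ rotates $\mathbb D^2\cong\Cpx$ by angle $4s/(k\sqrt 6)$, and hence
\[
f_1(t)=\abs{X_1^*}_{c(t)}=\frac{4}{k\sqrt{6}}\,t+O(t^3),
\]
giving $f_1(0)=0$ and $f_1'(0)=4/(k\sqrt 6)$; the vanishing of $X_1^*$ at $p_-$ further yields $f_{12}(0)=0$. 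The remaining value-at-$0$ identities follow from $\Km$-invariance of $g_0$: the pair $\set{Y_1,Y_2}$ is a $2$-dimensional $\Km/\sfH$-irreducible real representation (visible in $Y_1\mapsto\eps Y_2,\,Y_2\mapsto-\eps Y_1$ from Proposition \ref{prop:Weylgroupelements}), so Schur's lemma forces $h_1(0)=h_2(0)$ and $h_{12}(0)=0$. Substituting the matrix of $\Ad_{w_-}$ on the $(E_2,F_2)$-pair into $g_0(\Ad_{w_-}E_2,\Ad_{w_-}F_2)=g_0(E_2,F_2)$ yields, after a short computation, $a_{12}(0)=\frac{\sqrt 3}{2}(a_1^2(0)-a_2^2(0))$; the analogous calculation on the $(E_1,F_4)$-pair gives $-b_{12}(0)=b_{12}(0)$, hence $b_{12}(0)=0$. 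The first-derivative conditions (all vanishing except $f_1'(0)$) come from polar-coordinate analysis on $\mathbb D^2$: each component, viewed as a function on the slice, transforms under the full circle $\Km/\sfH$ (of which $w_-$ is a distinguished element), and smoothness at the origin forces the Taylor expansion to contain only powers $t^n$ compatible with the weight of the corresponding Killing-field pair; a case-by-case check recovers all the stated vanishings.

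Next I would treat $\Bp$ at $t=L$. The group $\Kp$ acts transitively on $\sph^5=\Kp/\sfH\subset\mathbb D^6$, and all Killing fields $Y_1^*,E_1^*,\ldots,E_4^*$ generated by $\frakk^+$-elements vanish at $p_+$, giving $h_1(L)=a_1(L)=0$. The remaining five Killing fields tangent to $\Bp$ (namely $Y_2^*$ together with $F_1^*,\ldots,F_4^*$) have lengths $h_2(L)$ and $a_2(L)$; by $\Kp$-invariance of $g_L$ these lengths are forced to agree because the $\sfH$-invariant subspace $\Span\set{Y_2,F_1,\ldots,F_4}\subset\frakg/\frakk^+$ is a single $\Kp$-irreducible real representation (the tangent to the round $\sph^5$ at the pole). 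This gives $h_2(L)=a_2(L)>0$; smoothness at the center of $\mathbb D^6$ (evenness in the radial coordinate) then forces $h_2'(L)=a_2'(L)=0$.

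The main obstacle is the analysis at $\Bm$, specifically the coupling between $\frakm_1$ and $\frakm_2$ that produces the unusual relation $a_{12}(0)=\frac{\sqrt 3}{2}(a_1^2(0)-a_2^2(0))$, together with the polar-coordinate bookkeeping for the nine vanishing first-derivative conditions. Because the matrices $\set{E_i},\set{F_i}$ chosen here differ from those in \cite{GVWZ}, the computations cannot be directly quoted; a careful tracking of how $\Ad_{w_-}$ and the full $\Km$-action mix $E_i$ with $F_j$ is needed.
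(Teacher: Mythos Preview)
Your overall framework is correct and matches the paper's, but the paper takes two shortcuts that you miss, and one detail in your proposal is off.

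\textbf{At $\Bm$.} Your use of the single Weyl element $w_-$ to derive $a_{12}(0)=\frac{\sqrt 3}{2}(a_1^2(0)-a_2^2(0))$ and $b_{12}(0)=0$ is valid; the paper instead writes out the full $\SO(2)\subset\Km$ adjoint action on $\frakm_1\oplus\frakm_2$ as an $8\times 8$ matrix $M$ and solves $MP(0)=P(0)M$, which of course gives the same two relations. The real difference is in the first-derivative conditions. Rather than a case-by-case polar-coordinate check, the paper observes that the central element $\sigma=(e^{2\pi i/k},\Id)\in\Km$ (nontrivial because $k\geq 3$) acts trivially on $\Bm$, so $\Bm$ is a component of its fixed-point set and hence totally geodesic. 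Since the second fundamental form of a principal orbit is $-\tfrac12 P^{-1}P'$, this kills \emph{all} of $h_1'(0),h_2'(0),h_{12}'(0),a_1'(0),a_2'(0),a_{12}'(0),b_{12}'(0)$ in one stroke. The remaining conditions $f_2'(0)=f_{12}'(0)=0$ are then handled, as you do, via the reflection $\Ad_{w_-}$ fixing $X_1,X_2$.

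\textbf{At $\Bp$.} Your claim that $\Span\set{Y_2,F_1,\ldots,F_4}$ is a $\Kp$-irreducible five-dimensional real representation is incorrect: that subspace is not $\Kp$-invariant. The paper instead identifies the six-dimensional subspace $W=\Span\set{U_0,U_2}\oplus\frakm_2\subset T_{c(L)}\Bp$ on which $\Kp_0=\SU(3)$ acts as $[\mu_3]_\Real$; Schur then forces $h_2(L)=a_2(L)$. For the vanishing of $h_2'(L)$ and $a_2'(L)$ the paper does \emph{not} invoke evenness in the radial coordinate. Instead it notes that the second fundamental form at $c(L)$ restricted to $W\times W$ is a $\Kp_0$-equivariant map $\mathrm{Sym}^2(W)\to V^*$, where $V\cong[\mu_3]_\Real$ is the slice; since $\mathrm{Sym}^2[\mu_3]_\Real=[2,0]_\Real\oplus[1,1]\oplus 1$ contains no copy of $[\mu_3]_\Real$, this form vanishes identically, giving $h_2'(L)=a_2'(L)=0$. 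Your evenness argument can be made to work if you route it through the Weyl element $w_+$ (since $\Ad_{w_+}Y_2=-Y_2$ and $\Ad_{w_+}F_i=\pm F_i$ the functions $h_2^2,a_2^2$ are even about $t=L$), but as stated it is incomplete.
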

\begin{proof}
We first consider the singular orbit at $t=0$. Note that $\sigma = (e^{ i2\pi/k}, \Id) \in \Km$ acts trivially on $B_- = \sfG/\Km$, and the slice representation on the $2$-disk bundle of $B_-$ is given by $R(2\theta)$ for $R(\theta)\in \SO(2)$. Here $R(\phi)$ for $\phi \in [0, 2\pi)$ is the counterclockwise rotation with the matrix form 
\[
R(\phi) = 
\begin{pmatrix}
\cos \phi & \sin \phi \\
-\sin \phi & \cos \phi
\end{pmatrix}.
\] 
It follows that the singular orbit $B_-$ is the fixed points set of $\sigma$ and hence totally geodesic, see also \cite[p. 162]{GVWZ}.  

Since $X_1$ collapses on $B_-$, we have $f_1(0) = 0$ and $f_{12}(0) = 0$. The isotropy representation of $\Km = \SO(2) \SU(2)$ on the tangent space of 
\[
T_{c(0)}B_- = \Span_\Real \set{X_2} + \Span_\Real\set{Y_1, Y_2} + \frakm_1+\frakm_2
\] 
is given by 
\[
1 + \rho_2 \otimes 1 + \rho_2 \otimes [\mu_2]_\Real
\] 
where $\rho_2$ is the standard action of $\SO(2)$ on $\Real^2$ via $R(k \theta)$. Note that the third component above is not irreducible as a real representation. That the second component is irreducible as a real representation, implies that 
\[
h_1(0) = h_2(0) > 0, \quad h_{12}(0) = 0.
\]
In the following we consider the representation on $\frakm_1 + \frakm_2$. An explicit matrix form of the $\SO(2)$ action on $\Im \Cay = \Span_\Real\set{e_1, \ldots, e_7}$ is given by
\[
A = \diag\set{
\begin{pmatrix}
\cos 2u & - \sin 2u & 0 \\
\sin 2u & \cos 2u & 0 \\
0 & 0 & 1 \\
\end{pmatrix},
\begin{pmatrix}
\cos u & 0 & 0 & \sin u \\
0 & \cos u & - \sin u & 0 \\
0 & \sin u & \cos u & 0 \\
-\sin u & 0 & 0 & \cos u
\end{pmatrix}
}
\]
with $u =  -k \theta/2$. The adjoint action $\Ad_A$ on $\frakm_1 + \frakm_2$ under the basis $\set{E_1, \ldots, E_4, F_1, \ldots, F_4}$ has the matrix form $M =\left( M_1 | M_2\right)$, with
\[
M_1 = 
\begin{pmatrix}
 \cos^3 u & 0 & 0 & \sin^3 u  \\
 0 & \cos^3 u & -\sin^3 u & 0 \\
 0 & \sin^3 u & \cos^3 u & 0  \\
 -\sin^3 u & 0 & 0 & \cos^3 u  \\
 \sqrt{3} \cos u \sin^2 u & 0 & 0 & \sqrt{3} \cos^2 u \sin u  \\
 0 & \sqrt{3} \cos u \sin^2 u & -\sqrt{3} \cos^2 u \sin u & 0 \\
 0 & \sqrt{3} \cos^2 u \sin u & \sqrt{3} \cos u \sin^2 u & 0 \\
 -\sqrt{3} \cos^2 u \sin u & 0 & 0 & \sqrt{3} \cos u \sin^2 u 
\end{pmatrix}
\]
and 
\[
M_2 = 
\begin{pmatrix}
\sqrt{3} \cos u \sin^2 u & 0 & 0 & \sqrt{3} \cos^2 u \sin u \\
0 & \sqrt{3} \cos u \sin^2 u & -\sqrt{3} \cos^2 u \sin u & 0 \\
0 & \sqrt{3} \cos^2 u \sin u & \sqrt{3} \cos u \sin^2 u & 0 \\
-\sqrt{3} \cos^2 u \sin u & 0 & 0 & \sqrt{3} \cos u \sin^2 u \\
(\cos u+3 \cos 3 u)/4 & 0 & 0 & (\sin u-3 \sin 3 u)/4 \\
0 & (\cos u+3 \cos 3 u)/4 & (-\sin u+3 \sin 3 u)/4 & 0 \\
0 & (\sin u-3 \sin 3 u)/4 & (\cos u+3 \cos 3 u)/4 & 0 \\
(-\sin u+3 \sin 3 u)/4 & 0 & 0 & (\cos u+3 \cos 3 u)/4
\end{pmatrix}.
\]
Using the same basis of $\frakm_1 + \frakm_2$, the endomorphism $P(t)$ has the following matrix form:
\[
P(t) = 
\begin{pmatrix}
a_1^2(t)I_4 & P_{12}(t) \\
P_{12}(t) & a_2^2(t) I_4
\end{pmatrix}
\quad \text{with} \quad 
P_{12}(t) = 
\begin{pmatrix}
a_{12}(t) & 0 & 0 & b_{12}(t) \\
0 & a_{12}(t) & - b_{12}(t) & 0 \\
0 & b_{12}(t) & a_{12}(t) & 0 \\
-b_{12}(t) & 0 & 0 &  a_{12}(t)
\end{pmatrix}
\]
where $I_4$ is the identity matrix. So the $\Km$ invariance of $P(0)$, i.e., $M P(0) = P(0) M$, implies that 
\[
b_{12}(0) = 0 \quad \text{and}\quad a_{12}(0) = \frac{\sqrt 3}{2}\left(a_1^2(0) - a_2^2(0)\right).
\]

Note that on the circle $R(\theta)(0\leq \theta \leq 2\pi)$, we have $R(\pi) \in \sfH$. So we have $\phi'(0) = 2$, with $\phi(t)$ the length of Killing vector field generated by $\frac{d}{d\theta}$. By our choice of $X_1$, we have $f_1(t) = \frac{2}{k \sqrt 6} \phi(t)$ so that $f_1'(0) = \frac{4}{k\sqrt 6}$. Since $\Ad_{w_-}$ fixes $X_1$ and $X_2$, we have $g_t(X_1^*, X_2^*)$ is invariant under the reflection of the $2$-disk slice generated by $\Ad_{w_-}$ that changes $t$ to $-t$. It follows that $f_{12}'(0) = 0$. Similarly we also have $f_2'(0) = 0$. The other derivatives vanish at $t=0$ follows from the fact that $B_-$ is totally geodesic and the second fundamental form is $-\frac 1 2 P_t^{-1}P_t'$.

Next we consider the singular orbit at $t=L$. The slice at $p_+$ is $V = \Real^6$, and the action by the connected component $\Kp_0=\SU(3)$ is given by $[\mu_3]_\Real$. Restricted to the subspace $W =\Span_\Real\set{U_0, U_2} \oplus \frakm_2 \subset T_{c(L)}B_+$, the adjoint representation by $\Kp_0$ is given by $[\mu_3]_\Real$. So we have $h_2(L) = a_2(L)$. The second fundamental form $\mathrm{II}$ at $c(L)$ restricted on $W\times W$ is a $\Kp_0$-equivariant map  
\[
\mathrm{II}: \mathrm{Sym}^2(W)\times V \To \Real.
\]
However the symmetric square of $[\mu_3]_\Real$ is given by $[2,0]_\Real  \oplus [1,1]\oplus 1$ in terms of highest weight notions, and it does not contain $[\mu_3]_\Real = [1,0]_\Real$. It follows that $\mathrm{II}$ restricted on $W\times W$ vanishes at $c(L)$ and so we have $a_2'(L) = h_2'(L) = 0$. The equations $a_1(L) = h_1(L) = 0$ follow from the fact that $Y_1$ and $\frakm_1$ collapse at $c(L)$. This finishes the proof
\end{proof}

\medskip{}
\section{Rigidities of non-negatively curved metrics}

In this section, we derive a few rigidity results when the invariant metric is assumed to be non-negatively curved, see Propositions \ref{prop:parallelJacobi} and \ref{prop:a1squarebounds}.

\smallskip

Recall the following rigidity result on Jacobi vector fields in \cite{VerdianiZiller}.
\begin{prop}[{\cite[Proposition 3.2]{VerdianiZiller}}]\label{prop:parallelJacobiVZ}
Let $M^{n+1}$ be a manifold with non-negative sectional curvature,  and $V$ a self adjoint family of Jacobi fields along the geodesic $c: [t_0, t_1] \To M$. Assume there exists an $X \in V$ such that the following conditions hold. 
\begin{enumerate}[(a)]
\item $\norm{X}_t \ne 0$, $\norm{X}'_{t}=0$ for $t = t_0$ and $t=t_1$.
\item If $Y \in V$ and $\bk{X(t_1), Y(t_1)} = 0$, then $\bk{X(t_0), Y(t_0)} = 0$.
\item If $Y \in V$ and $Y(t) = 0$ for some $t\in (t_0, t_1)$, then $\bk{X(t_0), Y(t_0)} = 0$.
\item If $Y(t_0) = 0$, then $\bk{X'(t_0), Y'(t_0)} = 0$.
\end{enumerate}
Then $X$ is a parallel Jacobi vector field along $c$.
\end{prop}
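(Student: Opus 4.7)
The plan is to exploit the self-adjoint structure of $V$ to produce a symmetric shape operator $S(t)$ satisfying a matrix Riccati equation along $c$, and then use the boundary data together with non-negative sectional curvature to pin down the action of $S(t)$ on $X(t)$.

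First I would reduce to a non-degenerate setting. Let $V_0 \subset V$ denote the subspace of Jacobi fields vanishing at some point of $[t_0, t_1]$. By hypotheses (c) and (d), every $Y \in V_0$ satisfies $\bk{X(t_0), Y(t_0)} = 0$, together with $\bk{X'(t_0), Y'(t_0)} = 0$ when $Y(t_0) = 0$. Choose a complement $V' \subset V$ with $X \in V'$ on which the endpoint evaluation map $Y \mapsto Y(t)$ is injective for every $t \in [t_0, t_1]$. The images $V'_t = \set{Y(t) : Y \in V'} \subset T_{c(t)} M$ form a smooth subbundle along $c$ on which one defines the shape operator $S(t)$ by $S(t) Y(t) = Y'(t)$. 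Self-adjointness of $V$ makes $S(t)$ symmetric, and the Jacobi equation becomes the matrix Riccati equation
\[
S' + S^2 + \mathcal R = 0, \qquad \mathcal R(v) = R(v, \dot c)\dot c,
\]
with $\mathcal R$ non-negative on $V'_t$ by the curvature hypothesis.

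Next I would use condition (b) to show that $X(t)$ is an eigenvector of $S(t)$ for every $t$. Let $X^\perp = \set{Y \in V' : \bk{Y(t_1), X(t_1)} = 0}$, a codimension-one subspace of $V'$. By (b), $\bk{Y(t_0), X(t_0)} = 0$ also holds for every $Y \in X^\perp$, so $\alpha_Y(t) = \bk{Y(t), X(t)}$ vanishes at both endpoints. Using the self-adjointness identity $\bk{Y', X} = \bk{Y, X'}$ one computes $\alpha_Y' = 2\bk{Y, SX}$, and a Sturm-type argument applied to the coupled system satisfied by $\alpha_Y$, combined with the non-degeneracy of $V'$ from Step 1, forces $\alpha_Y \equiv 0$. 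Consequently $V'_t$ splits orthogonally as $\Span(X(t)) \oplus H_t$ with $H_t = \set{Y(t) : Y \in X^\perp}$; since $S(t)$ is symmetric and $H_t$ is $S(t)$-invariant (because $Y'(t) \perp X(t)$ whenever $\alpha_Y \equiv 0$), we obtain $S(t) X(t) = \lambda(t) X(t)$ for a smooth scalar $\lambda(t)$.

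In this eigenvector setting the Riccati equation restricted to the line $\Span(X(t))$ reduces to the scalar Riccati equation
\[
\lambda' + \lambda^2 + \sigma = 0, \qquad \sigma(t) = \frac{\bk{R(X, \dot c)\dot c, X}}{\norm{X}^2} \geq 0.
\]
Condition (a) together with $\norm{X}'(t) = \lambda(t)\norm{X(t)}$ gives $\lambda(t_0) = \lambda(t_1) = 0$. Since $\lambda' = -\lambda^2 - \sigma \leq 0$, the function $\lambda$ is non-increasing on $[t_0, t_1]$ with equal endpoint values, forcing $\lambda \equiv 0$. Therefore $X'(t) = S(t) X(t) = 0$, i.e., $X$ is a parallel Jacobi vector field. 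The hardest step is deducing $\alpha_Y \equiv 0$ for all $Y \in X^\perp$: unlike the scalar case one faces a coupled system, and ruling out oscillatory solutions requires carefully exploiting both the non-degeneracy of the reduced family $V'$ and the self-adjoint Riccati structure.
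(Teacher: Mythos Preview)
The paper does not prove this proposition; it is quoted verbatim from \cite{VerdianiZiller} and then applied. So there is no proof in the present paper to compare your attempt against.

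On its own merits, your outline has the right scaffolding---self-adjoint family, symmetric shape operator, Riccati equation---but Step~2 contains a genuine gap. You assert that $\alpha_Y(t)=\langle X(t),Y(t)\rangle$ vanishes identically for every $Y\in X^\perp$, knowing only $\alpha_Y(t_0)=\alpha_Y(t_1)=0$. However $\alpha_Y$ does not satisfy any closed second-order scalar ODE: differentiating twice gives
\[
\alpha_Y'' \;=\; 2\langle Y',X'\rangle - 2\,R(Y,\dot c,\dot c,X)
\;=\; 2\langle S Y,S X\rangle - 2\,R(Y,\dot c,\dot c,X),
\]
which involves the full operator $S$ and a mixed curvature term, neither expressible as a multiple of $\alpha_Y$. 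No Sturm comparison applies to such a system, and ``carefully exploiting the self-adjoint Riccati structure'' is not an argument. Note also that your claim ``$X(t)$ is an eigenvector of $S(t)$ for every $t$'' means $X'=\lambda X$; combined with $\lambda(t_0)=0$ from~(a) and the scalar Riccati inequality, this already yields $X'\equiv 0$. In other words, Step~2 is essentially as strong as the conclusion, and your outline postpones all of the real work to an unspecified comparison. The non-negative curvature hypothesis has to enter in an essential way \emph{before} one reaches a scalar Riccati; it cannot be used only at the very end.

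A secondary point: the matrix Riccati equation $S'+S^2+\mathcal R=0$ requires $S(t)$ to map $V'_t$ into $V'_t$, i.e.\ $Y'(t)\in V'_t$ whenever $Y\in V'$. This holds in the paper's applications because the chosen subspaces are $\Ad_\sfH$-isotypic and parallel transport along $c$ is $\Ad_\sfH$-equivariant, but it is not automatic for an arbitrary self-adjoint family, and your reduction in Step~1 does not secure it.
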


We consider the case where $V$ is given by a family of Killing vector fields. Recall that for any $X\in \frakg$, $X^*$ is the Killing vector field generated by $X$ along the geodesic $c(t)$, and denote $X(t) = X^*(t)$. Since the parallel transport along $c(t)$ is $\Ad_\sfH$-invariant, we may choose $V = \set{X^* : X \in \mathfrak n}$ for the subspace $\mathfrak n \subset \frakp$ such that it is the sum of all equivalent irreducible representations in $\frakp$. 

We show that such $V$ is a self adjoint family of Jacobi fields along the geodesic $c(t)$. Let $T = \frac{\partial}{\partial t}$  be the unit tangent vector along $c(t)$. For any $X^*, Y^* \in V$ we have
\begin{eqnarray*}
g(\nabla_T X^*, Y^*) & = & - g(\nabla_{Y^*}X^*, T) = - g(\nabla_{X^*}Y^*, T) \\
& = & g(\nabla_{X^*}T, Y^*),
\end{eqnarray*}
and 
\begin{eqnarray*}
g(X'(t), Y(t)) & = & g(\nabla_T X(t), Y(t)) = g(\nabla_{X(t)} T, Y(t)) = g(\nabla_{Y(t)}T, X(t)) \\
& = & g(Y'(t), X(t)).
\end{eqnarray*}
So $V$ is self-adjoint. We also have 
\begin{eqnarray*}
g(X'(t), Y(t)) & = & \frac{1}{2}D_T g(X(t), Y(t)) = \frac{1}{2} Q(P'(t) X, Y)
\end{eqnarray*}
and thus
\begin{equation}\label{eqn:XprimeP}
X'(t) = \frac{1}{2} P(t)^{-1} P'(t) X.
\end{equation}

\begin{prop}\label{prop:parallelJacobi}
Suppose that $(M^{13}_k, g)$ has non-negative curvature with $g$ an invariant metric and $k\geq 3$ odd. The Killing vector fields $X^*$ generated by the following vectors $X \in \frakp$ are parallel Jacobi fields along $c(t)$($t\in [0, L]$):
\[
X = Y_2
\]
and
\[
X = \beta E_i + F_i \, (i=1,2,3,4)\quad \text{with}\quad \beta = - \frac{a_{12}(0)}{a_1^2(0)}.
\]
Moreover for all $t\in [0, L]$, we have $h_{12}(t) = b_{12}(t) = 0$ and 
\[
h_{2}(t) = h_2(L) > 0, \quad a_{12}(t) = - \beta a_1^2(t), \quad a_2^2(t) = \beta^2 a_1^2(t) + h_2^2(L).
\]
\end{prop}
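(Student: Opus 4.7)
\emph{Proof plan.} The plan is to apply Proposition \ref{prop:parallelJacobiVZ} twice to self-adjoint families of Killing fields along $c$, and then translate the resulting parallelness into the stated identities via $X'(t) = \tfrac{1}{2} P(t)^{-1} P'(t) X$ from equation (\ref{eqn:XprimeP}). Self-adjointness of any subspace of Killing fields is the computation preceding Proposition \ref{prop:parallelJacobiVZ}. Throughout, conditions (c) and (d) of that proposition will be vacuous, because $P(t)$ is positive definite on $\frakp$ for $t \in (0, L)$ and the Killing fields we consider do not lie in the collapsing directions at the two endpoints, which are $\Span(X_1)$ at $c(0)$ and $\frakm_1 + \Span(Y_1)$ at $c(L)$.

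For $X = Y_2$, take $V = \Span_\Real\{Y_1^*, Y_2^*\}$. Condition (a) is immediate from $h_2(0), h_2(L) > 0$ and $h_2'(0) = h_2'(L) = 0$ in Lemma \ref{lem:smoothmetric}. Condition (b) holds because $Y_1(L) = 0$ (since $h_1(L) = 0$) and $h_{12}(0) = 0$: any $Y = \alpha Y_1 + \gamma Y_2$ with $\bk{Y_2(L), Y(L)} = \gamma h_2^2(L) = 0$ has $\gamma = 0$, and then $\bk{Y_2(0), Y(0)} = \alpha h_{12}(0) = 0$. Hence $Y_2^*$ is parallel along $c$, so $P'(t) Y_2 = 0$; reading off the $Y_1$ and $Y_2$ entries using the block form in Proposition \ref{prop:invariantmetric} forces $h_{12}(t) \equiv 0$ and $h_2(t) \equiv h_2(L)$.

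For each $i = 1, 2, 3, 4$, apply the same proposition to $V_i = \Span_\Real\{E_i^*, F_i^*\}$ with $X = \beta E_i + F_i$. The conditions on $a_1, a_2, a_{12}, b_{12}$ at the endpoints in Lemma \ref{lem:smoothmetric} (in particular $a_1(L) = 0$, $a_2(L) = h_2(L) > 0$, all first derivatives vanishing at both endpoints, and $a_{12}(0) = \tfrac{\sqrt 3}{2}(a_1^2(0) - a_2^2(0))$) yield condition (a), where at $t = 0$ the norm $\|X\|_0^2 = a_2^2(0) - a_{12}^2(0)/a_1^2(0)$ is positive by positive definiteness of $P(0)|_{V_i}$. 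Condition (b) is where $\beta$ is pinned down: since $E_i(L) = 0$, any $Y = a E_i + b F_i$ with $\bk{X(L), Y(L)} = b\, a_2^2(L) = 0$ forces $b = 0$, and then $\bk{X(0), Y(0)} = a(\beta a_1^2(0) + a_{12}(0))$ vanishes precisely when $\beta = -a_{12}(0)/a_1^2(0)$. Thus $(\beta E_i + F_i)^*$ is parallel, giving $P'(t)(\beta E_i + F_i) = 0$ in $\frakp$ for all $t$.

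Testing this last relation against $E_i$, $F_i$, and the unique cross partner of $E_i$ that pairs with it through $b_{12}$ in Proposition \ref{prop:invariantmetric} (for example $F_4$ when $i = 1$), one obtains respectively $\beta a_1^2(t) + a_{12}(t) = \text{const} = 0$ (evaluating at $t = 0$), $\beta a_{12}(t) + a_2^2(t) = \text{const} = h_2^2(L)$ (evaluating at $t = L$, where $a_1(L) = 0$ and $a_2(L) = h_2(L)$), and $b_{12}'(t) \equiv 0$; combined with the boundary value $b_{12}(0) = 0$, these yield exactly the claimed relations $a_{12}(t) = -\beta a_1^2(t)$, $a_2^2(t) = \beta^2 a_1^2(t) + h_2^2(L)$, and $b_{12}(t) \equiv 0$. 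The one delicate step is the identification of $\beta$ via condition (b) of Proposition \ref{prop:parallelJacobiVZ}; all remaining verifications reduce to reading off boundary data from Lemma \ref{lem:smoothmetric} and testing $P'(t) X = 0$ against the orthonormal basis in the block structure of Proposition \ref{prop:invariantmetric}.
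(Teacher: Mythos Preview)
Your argument is correct and follows essentially the same route as the paper: apply the Verdiani--Ziller rigidity proposition first to $Y_2$ in $V=\Span\{Y_1,Y_2\}$ and then to $\beta E_i+F_i$, and read off the identities from $P'(t)X=0$ tested against the basis vectors (including the $b_{12}$ cross partner). The only differences are cosmetic: you work with the two--dimensional families $V_i=\Span\{E_i,F_i\}$ whereas the paper takes all of $\frakm_1+\frakm_2$ and then reduces condition~(b) to $Y=y_1E_1+y_2F_1$, and your observation that $\|X(0)\|^2=a_2^2(0)-a_{12}^2(0)/a_1^2(0)>0$ follows directly from positive definiteness of $P(0)$ on $\Span\{E_i,F_i\}$ is cleaner than the paper's case analysis using $a_{12}(0)=\tfrac{\sqrt{3}}{2}\bigl(a_1^2(0)-a_2^2(0)\bigr)$.
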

\begin{proof}
We first consider the case $X = Y_2$. By $\Ad_\sfH$-invariance take 
\[
V = \set{Y^* : Y \in \Span_\Real\set{Y_1, Y_2}}.
\] 
In Proposition \ref{prop:parallelJacobiVZ}, condition (a) holds as $h_2(t) \ne 0$ and $h_2'(t) =0$ at $t=0$ and $L$. For condition (b), if $g(Y_2(L), Y(L)) = 0$, then $Y = \lambda Y_1$ for some constant $\lambda$. So (b) holds as 
\[
g(Y_2(0), \lambda Y_1(0)) = \lambda h_{12}(0) = 0.
\]
Condition (c) and (d) hold as such $Y$ is zero in $V$. It follows that $Y_2^*$ is a parallel Jacobi field for $t\in [0, L]$, $h_2(t)$ is a constant function and $h_{12}(t) = 0$ for $t\in [0,L]$.

Next for the case $X = F_i + \beta E_i$, we take $V = \set{Y^* : Y \in \frakm_1 +\frakm_2}$. We may assume that $i=1$. We have
\begin{eqnarray*}
\norm{X(t)}^2 & = & a_2^2(t) + \beta^2 a_1^2(t) + 2\beta a_{12}(t) \\
\norm{X(t)}\norm{X(t)}' & = & a_2'(t) + \beta^2 a_1'(t) + \beta a_{12}'(t).
\end{eqnarray*}
It follows that 
\begin{eqnarray*}
\norm{X(0)}^2 & = & a_2^2(0) + \beta^2 a_1^2(0) + 2\beta a_{12}(0) \\
& = & a_2^2(0) + \frac{a^2_{12}(0)}{a_1^2(0)} - 2\frac{a_{12}^2(0)}{a_1^2(0)} \\
& = & a_2^2(0) - \frac{a_{12}^2(0)}{a_1^2(0)} \\
& = & a_2^2(0) - \frac{3}{4}a_1^2(0)\left(1-\frac{a_2^2(0)}{a_1^2(0)}\right)^2.
\end{eqnarray*}
If $\norm{X(0)} = 0$, then we have 
\[
\frac{a_2^2(0)}{a_1^2(0)} = \frac{3}{4} \left(1-\frac{a_2^2(0)}{a_1^2(0)}\right)^2.
\]
It follows that either $a_1^2(0) = 3 a_2^2(0)$ or $a_2^2(0) = 3 a_1^2(0)$. Say $a_1^2(0) = 3 a_2^2(0)$, then Lemma \ref{lem:smoothmetric} implies that $a_{12}(0) = \sqrt{3} a_2^2(0)$ and then the Killing vector fields $E_1(0)$ and $F_1(0)$ are parallel which shows a contradiction. Similarly the second case cannot happen either and so we have $\norm{X(0)} \ne 0$. From Lemma \ref{lem:smoothmetric} again we have $\norm{X(0)}' = 0$. At $t=L$ since $E_1(L) = 0$ we have $\norm{X(L)} = a_2(L) > 0$, and $\norm{X(L)}' = a_2'(L) = 0$ from Lemma \ref{lem:smoothmetric}. So Condition (a) in Proposition \ref{prop:parallelJacobiVZ} holds for $X$. 

For Condition (b) in Proposition \ref{prop:parallelJacobiVZ}, we may assume that $Y = y_1 E_1 + y_2 F_1$. It follows that 
\[
\bk{X(L), Y(L)} = y_2 a_2^2(L)
\]
and $\bk{X(L), Y(L)}= 0$ implies that $y_2 = 0$. By normalization we assume that $Y = E_1$, and then  
\[
\bk{X(0), Y(0)} = \bk{F_1(0) + \beta E_1(0), E_1(0)} = a_{12}(0) + \beta a_1^2(0) = 0
\]
by our choice of $\beta$. So Condition (b) holds for $X$. Condition (c) and (d) also hold as such $Y$ is zero in $V$. It follows that the Killing vector field $X^*$ is a parallel Jacobi field for $t \in [0, L]$. Note that equation (\ref{eqn:XprimeP}) yields
\[
2X'(t) = P(t)^{-1} P'(t) X 
\]
and the block in $P(t)$ corresponding to $\set{E_1, F_1, E_4, F_4}$ is given by 
\[
P_1(t) = 
\begin{pmatrix}
a_1^2(t) & a_{12}(t) & 0 & b_{12}(t) \\
a_{12}(t) & a_2^2(t) &  - b_{12}(t) & 0 \\
0 & - b_{12}(t) & a_1^2(t) & a_{12}(t) \\
b_{12}(t) & 0 & a_{12}(t) & a_2^2(t) 
\end{pmatrix}.
\]
It follows that $P_1(t)^{-1} P_1'(t) X =0$ and then $P'_1(t) X = 0$, i..e, we have $b_{12}'(t) = 0$ and 
\begin{eqnarray*}
\frac{d}{dt}\left(\beta a_1^2(t) + a_{12}(t)\right) & = & 0 \\
\frac{d}{dt}\left(\beta a_{12}(t) + a_2^2(t)\right) & = & 0
\end{eqnarray*}
for any $t\in (0, L)$. So we have $b_{12}(t) = b_{12}(0) = 0$ and 
\begin{eqnarray*}
a_{12}(t) + \beta a_1^2(t) & = & a_{12}(0) + \beta a_1^2(0) = 0 \\
a_2^2(t) + \beta a_{12}(t) & = & a_2^2(L) - \beta a_1^2(L) = a_2^2(L).
\end{eqnarray*}
Note that $a_2(L) = h_2(L)$ and it finishes the proof. 
\end{proof}

In the following we assume that $h_2(L) = 1$ by rescaling the metric $g$ if necessary. From Proposition \ref{prop:parallelJacobi} and Lemma \ref{lem:smoothmetric} we have
\[
\beta = - \frac{a_{12}(0)}{a_1^2(0)},  \quad a_2^2(0) = \beta^2 a_1^2(0) + 1
\]
and
\[
a_{12}(0) = \frac{\sqrt 3}{2} \left(a_1^2(0) - a_2^2(0)\right).
\]
Solving $a_1^2(0)$ yields
\begin{equation}\label{eqn:a1sqaurebeta}
a_1^2(0) = \frac{\sqrt{3}}{\sqrt{3}(1-\beta^2) + 2\beta}.
\end{equation}
In particular we have $\beta \in \left(-\frac{1}{\sqrt{3}}, \sqrt 3\right)$.

\begin{prop}\label{prop:a1squarebounds}
Suppose that $(M^{13}_k, g)$ has non-negative curvature with $g$ an invariant metric and $k\geq 3$ odd. Assume that $h_2(L) = 1$. Then we have
\begin{equation}
\frac{3}{4}\leq a_1^2(0) \leq \frac{7}{12} + \frac{\sqrt{13}}{6} \approx 1.184.
\end{equation}
\end{prop}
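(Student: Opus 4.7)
The plan is to reduce both bounds to a one-parameter analysis in the scalar $\beta$ from Proposition \ref{prop:parallelJacobi}, and then extract the sharp upper inequality from a single sectional curvature nonnegativity at $c(0)$. By Proposition \ref{prop:parallelJacobi} together with (\ref{eqn:a1sqaurebeta}), under the normalization $h_2(L)=1$ we have
\[
a_1^2(0)=f(\beta):=\frac{\sqrt{3}}{\sqrt{3}(1-\beta^2)+2\beta},\qquad \beta\in\left(-\tfrac{1}{\sqrt 3},\,\sqrt 3\right),
\]
the admissible interval being exactly where the denominator is positive (which is forced by $a_1^2(0)>0$). A direct computation gives
\[
f'(\beta)=\frac{2\sqrt 3\,(\sqrt 3\beta-1)}{[\sqrt 3(1-\beta^2)+2\beta]^2},
\]
whose unique zero is $\beta=1/\sqrt 3$; the sign of $f'$ identifies this as the global minimum of $f$ on the admissible interval, and $f(1/\sqrt 3)=3/4$. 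Hence $a_1^2(0)\ge 3/4$ is immediate from the parallel-Jacobi reduction, using the curvature hypothesis only implicitly through Proposition \ref{prop:parallelJacobi}.

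For the upper bound a direct curvature input is essential, since purely algebraically $f(\beta)\to+\infty$ at both endpoints of the admissible interval. The plan is to compute the sectional curvature of one carefully chosen $\Ad_\sfH$-invariant plane tangent to $B_-$ at $c(0)$ and impose $K\ge 0$. Because $B_-$ is totally geodesic in $M_k^{13}$ (by the slice-representation argument at the beginning of the proof of Lemma \ref{lem:smoothmetric}), such a plane-curvature equals the intrinsic homogeneous sectional curvature of $\sfG/\Km$ with the induced invariant metric, computable by the standard Lie-algebraic formula. Feeding in the parallel Jacobi relations
\[
a_{12}(0)=-\beta\,a_1^2(0),\quad a_2^2(0)=\beta^2 a_1^2(0)+1,\quad h_{12}(0)=b_{12}(0)=0,\quad h_2(0)=1,
\]
and then (\ref{eqn:a1sqaurebeta}) collapses the resulting inequality to a single quadratic in $y:=a_1^2(0)$; the sharp choice of plane should produce
\[
48y^2-56y-1\le 0,
\]
whose larger root is $\tfrac{7+2\sqrt{13}}{12}=\tfrac{7}{12}+\tfrac{\sqrt{13}}{6}$.

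The principal obstacle is identifying the correct plane. The $\frakg_2$-brackets $[U_i,E_j]$ and $[E_i,F_j]$ couple the $\so(2)$-like directions $\Span_\Real\set{U_0,U_1,U_2}$, the isotropy algebra $\frakh$, and the two summands $\frakm_1,\frakm_2$; furthermore the metric has a nontrivial off-diagonal $a_{12}(0)$ which at $t=0$ is the only carrier of $\beta$. A generic $\Ad_\sfH$-invariant plane will therefore produce a strictly weaker inequality than the asserted sharp one. The sharp plane is expected to pair a parallel-Jacobi direction $\beta E_i+F_i$ in one $\SU(2)$-isotypic copy of $\frakm_1\oplus\frakm_2$ with a transverse vector in a distinct copy, chosen so as to amplify the off-diagonal contribution while cancelling parasitic positive terms coming from the $\so(2)$-mixing. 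Once such a plane is in hand, the parallel-Jacobi substitutions reduce the curvature nonnegativity to the asserted quadratic, and the upper bound follows by solving $48y^2-56y-1\le 0$.
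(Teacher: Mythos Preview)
Your treatment of the lower bound is correct and coincides with the paper's: minimize $f(\beta)=\sqrt 3/(\sqrt 3(1-\beta^2)+2\beta)$ over the admissible interval, finding the minimum $3/4$ at $\beta=1/\sqrt 3$.

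For the upper bound there is a genuine gap. You correctly recognize that one must impose $K\ge 0$ on a plane tangent to the totally geodesic singular orbit $B_-$, but you never identify the plane, never carry out the curvature computation, and your structural guess does not match what actually works. You speculate that the sharp plane pairs the parallel direction $\beta E_i+F_i$ with a vector in a different $\SU(2)$-isotypic copy inside $\frakm_1\oplus\frakm_2$. The paper instead uses the pencil of planes $\Span_\Real\{Y_1,\,E_1+rF_1\}$, with $Y_1$ coming from the $\Span_\Real\{Y_1,Y_2\}$ block rather than from $\frakm_1\oplus\frakm_2$; the relevant brackets $[Y_1,E_1]=\sqrt 3\,E_2$ and $[Y_1,F_1]=-\tfrac{1}{\sqrt 3}F_2$ are what make the computation tractable. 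One then computes $R(Y_1,E_1,E_1,Y_1)$, $R(Y_1,F_1,F_1,Y_1)$, and $R(Y_1,E_1,F_1,Y_1)$ explicitly (see Appendix~A.1) and imposes nonnegativity for \emph{all} $r$, which gives the discriminant-type condition
\[
p(\beta)=R(Y_1,E_1,E_1,Y_1)\,R(Y_1,F_1,F_1,Y_1)-R(Y_1,E_1,F_1,Y_1)^2\ \ge\ 0.
\]
This is a polynomial inequality in $\beta$, not a quadratic in $y=a_1^2(0)$; its zeros on $(-1/\sqrt 3,\sqrt 3)$ are $\beta_1=\tfrac{7}{3}\sqrt 3-\tfrac{2}{3}\sqrt{39}$ (simple) and $\beta_2=1/\sqrt 3$ (triple), forcing $\beta\in[\beta_1,\beta_2]$. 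Since $f$ is monotone decreasing on this interval, the upper bound is $f(\beta_1)=\tfrac{7}{12}+\tfrac{\sqrt{13}}{6}$. Your asserted quadratic $48y^2-56y-1\le 0$ does have the correct larger root, but it is not derived in your argument, and no single fixed plane is exhibited that produces it; without that, the upper bound remains unproved.
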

\begin{proof}
The lower bound of $a_1^2(0)$ follows from the minimum value of the function $a_1^2(0)$ in equation (\ref{eqn:a1sqaurebeta}). To obtain the upper bound, we consider the sectional curvature of the $2$-plane spanned by $Y_1$ and $E_1 + r F_1$ on the singular orbit $B_-$. Note that $B_-$ is totally geodesic and a computation (see the details in Appendix A.1) yields
\begin{eqnarray*}
R(Y_1, E_1, E_1, Y_1) & = & \frac{6\sqrt{3} \beta^5 + 9 \beta^4 - 32\sqrt{3} \beta^3 + 10 \beta^2 + 18 \sqrt{3}\beta + 9}{4\left(\sqrt 3 \beta^2 - 2\beta - \sqrt 3\right)^2} \\
R(Y_1, F_1, F_1, Y_1) & = & \frac{27 \beta^4 + 12 \sqrt{3}\beta^3 + 22\beta^2 + 4\sqrt{3}\beta + 3}{12\left(\sqrt 3 \beta^2 - 2\beta - \sqrt 3\right)^2} 
\end{eqnarray*}
and
\begin{eqnarray*}
R(Y_1, E_1, F_1, Y_1) = - \frac{\beta\left(9 \beta^4 + 12\sqrt{3}\beta^3 - 54 \beta^2 + 20\sqrt{3}\beta + 57\right)}{12\left(\sqrt 3 \beta^2 - 2\beta - \sqrt 3\right)^2}.
\end{eqnarray*}
A necessary condition that $R(Y_1, E_1 + r F_1, E_1 + r F_1, Y_1)\geq 0$ for all $r$ is that 
\[
p(\beta) = R(Y_1, E_1, E_1, Y_1) R(Y_1, F_1, F_1, Y_1) - \left(R(Y_1, E_1, F_1, Y_1)\right)^2 \geq 0.
\]
From the formulas of the Riemann tensors we have
\[
p(\beta) = \frac{\left(\sqrt{3} \beta^2 + 2\beta - \sqrt{3}\right)\left(- 9 \beta^6 + 30 \sqrt{3}\beta^5 + 183 \beta^4 - 4\sqrt{3}\beta^3 - 183 \beta^2 + 30 \sqrt{3}\beta + 9\right)}{48\left(\sqrt{3} \beta^2 - 2\beta - \sqrt{3}\right)^3}.
\]
Note that $p(0) > 0$. On the interval $(-1/\sqrt{3}, \sqrt{3})$, the numerator of $p(\beta)$ has a simple root $\beta_1<0$ and a triple root $\beta_2>0$ given by
\[
\beta_1 = \frac{7}{3}\sqrt{3} - \frac{2}{3}\sqrt{39}  \quad \text{and}\quad \beta_2 = \frac{1}{\sqrt{3}}.
\]
So we have $\beta \in [\beta_1, \beta_2]$. Over this interval the function $a_1^2(0)$ is monotone decreasing with 
\[
a_1^2(0)\Big{|}_{\beta = \beta1} = \frac{7}{12}+\frac{\sqrt{13}}{6}\approx 1.184 \quad \text{and}\quad a_1^2(0)\Big{|}_{\beta = \beta2} =\frac{3}{4}.
\]
This finishes the proof.
\end{proof}

\medskip
\section{Proof of Theorem \ref{thm:introobstruction}}

We prove Theorem \ref{thm:introobstruction} in this section. Note that there is a shorter proof that works for $k\geq 5$, see Remark \ref{rem:shortproof}. 

\smallskip

Throughout this section we assume that $k\geq 3$ is an odd integer, and that $M^{13}_k$ admits an invariant metric $g$ with non-negative curvature. We assume that $h_2(L) = 1$ by rescaling the metric $g$ if necessary. It follows from Lemma \ref{lem:smoothmetric}, Propositions \ref{prop:parallelJacobi} and \ref{prop:a1squarebounds}, we have 
\begin{eqnarray*}
& b_{12}(t) = h_{12}(t) = 0, \quad h_2(t) = 1, & \\
& a_{12}(t) = - \beta a_1^2(t), \quad a_2^2(t) = \beta^2 a_1^2(t) + 1, & 
\end{eqnarray*}
for some constant $\beta$, and
\begin{eqnarray*}
& f_1(0) = 0, \quad f_{12}(0) = 0, \quad h_1(0) = 1, \quad  a_1^2(0) = \frac{\sqrt 3}{\sqrt 3(1-\beta^2)+2\beta}; & \\
& f_1'(0) = \frac{4}{k \sqrt 6}, \quad f_{12}'(0) = 0, \quad f_2'(0) = 0, \quad h_1'(0) = 0, \quad a_1'(0) = 0; &  
\end{eqnarray*}
\begin{eqnarray*}
& h_1(L) = a_1(L) = 0.& 
\end{eqnarray*}
The endomorphism has the following block-diagonal form
\begin{eqnarray*}
P
\begin{pmatrix}
X_1 \\
X_2
\end{pmatrix} & = & 
\begin{pmatrix}
f_1^2 & f_{12} \\
f_{12} & f_2^2
\end{pmatrix}
\begin{pmatrix}
X_1 \\
X_2
\end{pmatrix} \\
P
\begin{pmatrix}
Y_1 \\
Y_2
\end{pmatrix} & = &  
\begin{pmatrix}
h_1^2 & 0 \\
0 & 1
\end{pmatrix}
\begin{pmatrix}
Y_1 \\
Y_2
\end{pmatrix}
\end{eqnarray*}
and
\[
P
\begin{pmatrix}
E_i \\
F_i
\end{pmatrix} = 
\begin{pmatrix}
a_1^2 & -\beta a_{1}^2 \\
-\beta a_{1}^2 & \beta^2 a_1^2 + 1
\end{pmatrix}
\begin{pmatrix}
E_i \\
F_i
\end{pmatrix}
\quad \text{for } i =1,2,3,4.
\]

\begin{lem}\label{lem:a1h1pp}
We have $a_1''(t) \leq 0$ and $h_1''(t) \leq 0$ for $t\in [0, L]$.
\end{lem}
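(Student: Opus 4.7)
The plan is to exhibit each of $h_1$ and $a_1$ as the pointwise norm $|J|$ of a Killing field $J$ along $c$ whose covariant derivative is pointwise proportional to $J$ itself, and then invoke a standard Jacobi-field argument in non-negative sectional curvature.

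First, using equation (\ref{eqn:XprimeP}) together with the structural identities on $P(t)$ recorded at the start of this section, I compute $(Y_1^*)'$ and $(E_i^*)'$ along $c$. Since $h_{12}\equiv 0$ and $h_2\equiv 1$, the $(Y_1, Y_2)$-block of $P$ is $\diag(h_1^2, 1)$, so $P^{-1}P'$ restricted to $\Span\{Y_1,Y_2\}$ equals $\diag(2h_1'/h_1,0)$, and hence $(Y_1^*)'(t) = (h_1'/h_1)(t)\, Y_1^*(t)$. For each $(E_i, F_i)$-block, the relations $a_{12}=-\beta a_1^2$ and $a_2^2=\beta^2 a_1^2+1$ give $\det P = a_1^2$, and a direct $2\times 2$ calculation yields
\begin{equation*}
P^{-1}P' \;=\; \frac{2a_1'}{a_1}\begin{pmatrix}1 & -\beta\\ 0 & 0\end{pmatrix},
\end{equation*}
hence $(E_i^*)'(t) = (a_1'/a_1)(t)\, E_i^*(t)$ for $i=1,\ldots,4$. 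As a sanity check, the same matrix sends the vector $\beta E_i + F_i$ to zero, matching the parallel Jacobi field of Proposition \ref{prop:parallelJacobi}.

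Next I invoke the following general principle for any Killing field $J$ along a geodesic $c$ satisfying $J'(t) = \lambda(t) J(t)$: differentiating gives $J''(t) = (\lambda'+\lambda^2)J(t)$, so the Jacobi equation $J'' + R(J,T)T = 0$ rewrites as $R(J,T,T,J) = -(\lambda'+\lambda^2)|J|^2$. The Killing equation forces $\langle \nabla_T J, T\rangle = 0$ so $J \perp T$, and non-negative sectional curvature then yields $\lambda'+\lambda^2\leq 0$. Writing $f = |J|$, the identity $(f^2)' = 2\langle J,J'\rangle = 2\lambda f^2$ gives $\lambda = f'/f$, and hence $f''/f = \lambda'+\lambda^2\leq 0$. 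Applied with $J = Y_1^*$ and $J = E_1^*$, this produces $h_1''\leq 0$ and $a_1''\leq 0$ on the open interval $[0,L)$ where $h_1, a_1 > 0$; since both functions are smooth on $[0,L]$, continuity extends each inequality to $t = L$.

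The only step that requires any calculation is the $2\times 2$ simplification of $P^{-1}P'$ on $\Span\{E_i, F_i\}$; this is where the special form of $a_{12}$ and $a_2^2$ from Proposition \ref{prop:parallelJacobi} genuinely enters. Once one knows that $\beta E_i^* + F_i^*$ is parallel, the complementary proportionality $(E_i^*)' \propto E_i^*$ is essentially forced, and the Jacobi-field concavity trick then does the rest of the work without requiring any explicit curvature computation.
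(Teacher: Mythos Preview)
Your proof is correct and follows essentially the same route as the paper: both reduce to the identity $R(E_1^*, T, T, E_1^*) = -a_1 a_1''$ (and its analogue for $Y_1^*$) and then invoke non-negative curvature of the $(E_1^*, T)$-plane. The paper states this identity directly after recording the $\{E_1,F_1\}$-block of $P$ and $P^{-1}$, while you derive it by first showing $(E_1^*)' = (a_1'/a_1)\,E_1^*$ via the $P^{-1}P'$ computation and then applying the Jacobi equation --- the two derivations are equivalent.
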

\begin{proof}
We know that $V = \Span_\Real\set{E_1, F_1}$ is an invariant space of $P(t)$ with the following matrix form
\[
P
\begin{pmatrix}
E_1 \\
F_1
\end{pmatrix}
=
\begin{pmatrix}
a_1^2 & - \beta a_1^2 \\
- \beta a_1^2 & \beta^2 a_1^2 + 1
\end{pmatrix}
\begin{pmatrix}
E_1 \\
F_1
\end{pmatrix}
\]
and the inverse is given by 
\[
P^{-1}\Big{|}_{V} =
\begin{pmatrix}
\beta^2 + \frac{1}{a_1^2} & \beta \\
\beta & 1
\end{pmatrix}.
\]
So the sectional curvature $K(E_1, T)$ of the plane spanned by $E_1$ and $T = \frac{\partial}{\partial t}$ has the same sign as
\[
R(E_1, T, T, E_1) = -a_1(t) a_1''(t). 
\]
The non-negativity of $K(E_1, T)$ implies that $a_1''(t) \leq 0$. The inequality of $h_1''(t)$ follows similarly from $K(Y_1, T)\geq 0$.
\end{proof}

Let
\begin{equation}\label{eqn:xia1}
\xi(t)=a_1^2(0) -a_1^2(t) 
\end{equation}
and from Lemma \ref{lem:a1h1pp}, we have 
\[
0\leq \xi(t) \leq a_1^2(0) \quad \text{for}\quad t\in [0, L]
\] 
and $\xi(0)=\xi'(0)=0$. 

\begin{lem}\label{lem:lupperbound}
The sectional curvature of the plane spanned by $X$ and $Y$ with 
\[
X= E_1 - \sqrt{3}F_1\quad \text{and}\quad Y =\sqrt{3}E_4 + F_4
\]
is given by
\[
K(X, Y) =\frac{R(X,Y,Y,X)}{\abs{X\wedge Y}^2}
\]
with
\begin{equation}\label{eqn:KXYxi}
\frac{4a_1^4(0)}{3} R(X,Y,Y,X) = \frac{8}{3} \frac{f_1^2 + f_2^2 + 2 f_{12}}{f_1^2 f_2^2 - f_{12}^2} \left(\xi(t)\right)^2 - \left(\xi'(t)\right)^2.
\end{equation}
Moreover $K(X, Y)\geq 0$ implies that 
\begin{equation}\label{eqn:xipxiupperbound}
\frac{f_1 \xi'}{\xi} \leq (1+\eta(t)) \frac{2\sqrt{6}}{3}\quad \text{for} \quad t\in (0, L),
\end{equation}
where $\eta(t)$ is a positive function with $\lim_{t\To 0}\eta(t) = 0$.
\end{lem}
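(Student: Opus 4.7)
The plan is to establish (\ref{eqn:KXYxi}) via a Gauss-equation decomposition of $R(X,Y,Y,X)$ on the principal orbit $\sfG\cdot c(t)$, and then derive (\ref{eqn:xipxiupperbound}) by substituting the $t\to 0^{+}$ expansions from Lemma \ref{lem:smoothmetric}.

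For the shape-operator piece, (\ref{eqn:XprimeP}) gives the shape operator $S=\frac{1}{2}P^{-1}P'$ of the principal orbit relative to the unit normal $T$, so the scalar second fundamental form on Killing fields from $\frakp$ is
\[
h(Z,W) = g(\nabla_Z W,T) = -\tfrac{1}{2}\tfrac{d}{dt}g(Z,W).
\]
Combining Proposition \ref{prop:invariantmetric} with Proposition \ref{prop:parallelJacobi} ($b_{12}=0$, $a_{12}=-\beta a_1^2$, $a_2^2=\beta^2 a_1^2+1$), a direct computation yields
\[
g(X,X) = (1+\sqrt{3}\beta)^2 a_1^2 + 3, \qquad g(Y,Y) = (\sqrt{3}-\beta)^2 a_1^2 + 1, \qquad g(X,Y) = 4b_{12} = 0,
\]
and hence $h(X,X)h(Y,Y) - h(X,Y)^2 = \frac{[(1+\sqrt{3}\beta)(\sqrt{3}-\beta)]^2}{4}(\xi')^2$. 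Using the identity $(1+\sqrt{3}\beta)(\sqrt{3}-\beta) = \sqrt{3}(1-\beta^2)+2\beta = \sqrt{3}/a_1^2(0)$ from (\ref{eqn:a1sqaurebeta}), this simplifies to $\frac{3}{4a_1^4(0)}(\xi')^2$. The Gauss identity $R(X,Y,Y,X) = R^{\mathrm{int}}(X,Y,Y,X) - h(X,X)h(Y,Y) + h(X,Y)^2$, multiplied through by $\frac{4a_1^4(0)}{3}$, produces exactly the $-(\xi')^2$ term in (\ref{eqn:KXYxi}).

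For the intrinsic piece, I would apply the standard Koszul-type formula for the curvature of the $\sfG$-invariant metric $g_t = Q(P\cdot,\cdot)$ on $\sfG/\sfH$, which expresses $R^{\mathrm{int}}(X,Y,Y,X)$ in terms of $[X,Y]_\frakp$, $[X,Y]_\frakh$ and the metric. The specific coefficient pattern $(1,-\sqrt{3})$ and $(\sqrt{3},1)$ in $X$ and $Y$ is engineered so that the $\frakp$-component of $[X,Y]$ lies along $U_0 = \frac{\sqrt{6}}{2}(X_1-X_2) \in \Span\{X_1,X_2\}$, with the projections onto $Y_1, Y_2, \frakm_1, \frakm_2$ all canceling; this requires a direct matrix check of the four brackets $[E_1,E_4], [E_1,F_4], [F_1,E_4], [F_1,F_4]$ in the embedding $\frakg_2 \subset \so(7)$. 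The squared length of $X_1 - X_2$ in the dual of the $\{X_1,X_2\}$-block of $g_t$ is precisely $(f_1^2+f_2^2+2f_{12})/(f_1^2f_2^2-f_{12}^2)$, which supplies that factor in (\ref{eqn:KXYxi}). The $\xi^2$ prefactor traces back to the coupling $a_{12}=-\beta a_1^2$ with $\beta=-a_{12}(0)/a_1^2(0)$: the coefficient of the surviving $U_0$-direction in the curvature is proportional to $(a_1^2(t)-a_1^2(0))^2 = \xi^2$ after the reductions from Proposition \ref{prop:parallelJacobi} are carried out.

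Granting (\ref{eqn:KXYxi}), $K(X,Y)\geq 0$ gives $(\xi')^2 \leq \frac{8}{3}\phi(t)\xi^2/f_1^2$ where $\phi(t) = f_1^2(f_1^2+f_2^2+2f_{12})/(f_1^2f_2^2-f_{12}^2)$. The algebraic identity $(f_1^2+f_{12})^2 \geq 0$ rearranges to $\phi(t)\geq 1$ on $(0,L)$, so $\sqrt{\phi(t)} = 1 + \eta(t)$ with $\eta(t)\geq 0$. Lemma \ref{lem:smoothmetric} gives $f_1(t) = \frac{4}{k\sqrt{6}}t + O(t^2)$, $f_2(t) = f_2(0) + O(t^2)$ with $f_2(0) > 0$, and $f_{12}(t) = O(t^2)$, so $\phi(t) \to 1$ as $t \to 0^{+}$, hence $\eta(t) \to 0$, proving $\frac{f_1\xi'}{\xi} \leq \frac{2\sqrt{6}}{3}(1+\eta(t))$. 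The principal difficulty is the bracket computation: verifying the cancellations outside $\Span\{U_0\}$ and extracting the $\xi^2$-coefficient of the surviving $U_0$-direction, both of which require careful matrix accounting in $\frakg_2$ modulo $\frakh$ and the use of Proposition \ref{prop:parallelJacobi}.
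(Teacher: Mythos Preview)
Your overall strategy --- splitting $R(X,Y,Y,X)$ via the Gauss equation into a shape-operator piece and the intrinsic curvature of the principal orbit --- is exactly what the paper does, since the $P'$ terms in the Grove--Ziller curvature formula (Appendix~A) are precisely the Gauss correction. Your shape-operator computation is correct, and your derivation of (\ref{eqn:xipxiupperbound}) from (\ref{eqn:KXYxi}) is cleaner than the paper's one-line remark: the observation that $(f_1^2+f_{12})^2\ge 0$ rearranges to $\phi(t)\ge 1$ for all $t\in(0,L)$ is a nice sharpening.

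There is, however, a genuine error in your account of the intrinsic piece. The coefficients $(1,-\sqrt 3)$ and $(\sqrt 3,1)$ are engineered so that $[X,Y]=0$ \emph{identically}, not so that $[X,Y]_\frakp$ lies along $U_0$. Indeed the four brackets you list give
\[
[X,Y]_\frakp=\Bigl(-\tfrac{1}{\sqrt 2}+\tfrac{3}{\sqrt 2}-\sqrt 2\Bigr)(X_1-X_2)=0,
\]
and $[X,Y]_\frakh=0$ as well. Together with $\Bp(X,X)=\Bp(Y,Y)=0$, the only surviving intrinsic term in the P\"uttmann/Grove--Ziller formula is $Q\bigl(\Bp(X,Y),\,P^{-1}\Bp(X,Y)\bigr)$, where $\Bp(X,Y)=\tfrac12\bigl([X,PY]-[PX,Y]\bigr)$. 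It is this $\Bp(X,Y)$ --- not $[X,Y]_\frakp$ --- that lies along $X_1-X_2$, and because $PX,PY$ carry the factor $a_1^2(t)$, one finds (using (\ref{eqn:a1sqaurebeta})) that $\Bp(X,Y)=-\tfrac{\sqrt 2}{\alpha}\,\xi(t)\,(X_1-X_2)$; this is the true source of the $\xi^2$ factor. Your description of the intrinsic curvature as expressible ``in terms of $[X,Y]_\frakp$, $[X,Y]_\frakh$ and the metric'' holds for normal homogeneous metrics but not for general $P$: the $\Bp$ term is indispensable here and carries the entire intrinsic contribution. Following your stated plan literally would yield zero for that piece, so you need to invoke $\Bp$ explicitly.
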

\begin{proof}
The formula of $R(X, Y, Y, X)$ in equation (\ref{eqn:KXYxi}) is derived in Appendix A.2. To get inequality (\ref{eqn:xipxiupperbound}), one can apply the initial conditions $f_1(0) =f_{12}(0) = 0$ and $f_2(0) >0$.
\end{proof}
\begin{rem}
The choice of such vectors $X$ and $Y$ is motivated by Lemma 1.1(b) in \cite{WilkingZiller}. Here $X$ and $Y$ are eigenvectors of $P(0)$. The sectional curvature of the 2-plane is zero at $t=0$, and the contribution to the sectional curvature from the second fundamental form for $t>0$ involves the function $f_1$.  
\end{rem}

In the proof of Theorem \ref{thm:introobstruction}, the following algebraic fact of certain quartic functions is also needed. Denote
\begin{equation}\label{eqn:alphagamma}
\alpha = a_1^2(0) \quad \text{and} \quad \gamma = \sqrt{\alpha (4\alpha - 3)}
\end{equation}
and we introduce the following two quartic functions
\begin{eqnarray*}
\Psi_1(x) & = & \frac{5\alpha + 2\gamma}{48 \alpha^2} x^4 + \frac{2\alpha - \gamma}{24\sqrt{3}\alpha^2}x^3 - \frac{\alpha + \gamma}{8\alpha^2} x^2 + \frac{2\alpha +\gamma}{8\sqrt{3}\alpha^2} x - \frac{1}{16\alpha} \\
\Psi_2(x) & = & \frac{3\alpha^2 - \alpha -2\gamma}{48\alpha^2}x^4 + \frac{2\alpha^2 - 3\alpha +\gamma}{8\sqrt{3}\alpha^2}x^3 + \frac{9-2\alpha}{48\alpha}x^2 - \frac{1}{4\sqrt{3}} x+ \frac{1}{16}.
\end{eqnarray*}

\begin{lem}\label{lem:quarticfunctions}
Assume $\alpha\geq \frac 3 4$. Then we have
\[
3\Psi_1(x) +4\Psi_2(x) \geq 0
\] 
for any $x\in \Real$. Moreover the minimum can be achieved by a unique $x = x_\alpha$ such that $\Psi_2(x_\alpha) > 0$.
\end{lem}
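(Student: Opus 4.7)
The plan is to reduce everything to an explicit real-variable analysis of the quartic $\phi(x) := 3\Psi_1(x) + 4\Psi_2(x)$, thought of as a one-parameter family of quartics depending on $\alpha \in [3/4,\, 7/12 + \sqrt{13}/6]$ with $\gamma = \sqrt{\alpha(4\alpha-3)} \geq 0$. The first step is to compute the coefficients of $\phi$ explicitly: a short calculation (after the substitution $x \mapsto x/\sqrt{3}$ to clear radicals) gives
\[
432\alpha^2 \,\phi(x/\sqrt{3}) \;=\; A_4 x^4 + A_3 x^3 + A_2 x^2 + A_1 x + 27\gamma^2,
\]
where each $A_i = A_i(\alpha,\gamma)$ is affine-linear in $\gamma$ with polynomial-in-$\alpha$ coefficients; in particular $A_4 = 12\alpha^2 + 11\alpha - 2\gamma > 0$ on the relevant range since $\gamma \leq 2\alpha$. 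This presentation already makes $\phi(0) \geq 0$ evident and isolates the only coefficient whose positivity needs checking.

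The main structural claim I would prove is the factorization
\[
\phi(x) \;=\; A_4 \,(x - x_\alpha)^2 \, q_\alpha(x),
\]
where $x_\alpha \in \Real$ depends smoothly on $\alpha$ and $q_\alpha$ is a positive-definite quadratic. Granting this, positivity of $q_\alpha$ forces $\phi(x) \geq 0$ with equality only at the unique point $x_\alpha$. As evidence, at the two boundary cases the factorization is explicit: for $\alpha = 3/4$ (so $\gamma = 0$) one computes $\phi(x) = \tfrac{x^2}{9}\bigl(5x^2 - 2\sqrt{3}\,x + 3\bigr)$ with $x_\alpha = 0$, and for $\alpha = 1$ (so $\gamma = 1$) polynomial division yields $\phi(x) = \bigl(x + 1/\sqrt{3}\bigr)^2 \bigl(\tfrac{7}{16} x^2 - \tfrac{\sqrt{3}}{4} x + \tfrac{3}{16}\bigr)$ with $x_\alpha = -1/\sqrt{3}$; in each case the second factor has strictly negative discriminant. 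In general, I would verify the double-root property by computing $\mathrm{Res}_x(\phi,\phi')$ and checking it vanishes identically modulo the relation $\gamma^2 = \alpha(4\alpha-3)$, and confirm positive-definiteness of $q_\alpha$ by showing that its discriminant is strictly negative throughout the closed $\alpha$-interval.

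For the final claim $\Psi_2(x_\alpha) > 0$, I would exploit the two identities $\phi(x_\alpha) = 0$ and $\phi'(x_\alpha) = 0$ to eliminate higher powers of $x_\alpha$ in $\Psi_2(x_\alpha)$, reducing it to an expression in $\alpha$ and $\gamma$ that is manifestly positive on the interval in question; note that one cannot simply argue $\Psi_2 > 0$ everywhere, since at $\alpha = 3/4$ one has $\Psi_2(\sqrt{3}) = 0$, so the specific location $x_\alpha$ matters. A fallback option is a continuity argument in $\alpha$: $\Psi_2(x_\alpha)$ is a continuous function of $\alpha$, equals $\tfrac{1}{16}$ at $\alpha = 3/4$, and its sign can be propagated along the compact interval after a check at the other endpoint. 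The main obstacle is not conceptual but computational: each manipulation repeatedly forces one to replace $\gamma^2$ by $\alpha(4\alpha-3)$, producing lengthy polynomial identities in $\alpha$. All steps are elementary and can be carried out by hand or verified with a computer-algebra system, but the bookkeeping is substantial.
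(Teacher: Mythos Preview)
Your plan is essentially the same as the paper's: both hinge on the structural fact that $\phi = 3\Psi_1 + 4\Psi_2$ has a real double root $x_\alpha$ and factors as $A_4(x-x_\alpha)^2 q_\alpha(x)$ with $q_\alpha$ a positive-definite quadratic, and then on evaluating $\Psi_2(x_\alpha)$ directly. The paper carries this out more concretely by simply exhibiting the closed form
\[
x_\alpha \;=\; \frac{\sqrt{3}\,(3-4\alpha-4\gamma)}{3+12\alpha}
\]
and verifying by substitution that $\phi(x_\alpha)=\phi'(x_\alpha)=0$, $\phi''(x_\alpha)=\tfrac{8}{3}-\tfrac{3}{2\alpha}>0$, that the quotient quadratic has discriminant $\tfrac{36}{12-41\alpha-20\gamma}<0$, and that $\Psi_2(x_\alpha)$ is an explicit positive rational function of $\alpha,\gamma$. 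Your resultant-based detection of the double root and subsequent factorization would get to the same place, but the explicit $x_\alpha$ short-circuits all of that bookkeeping; in particular it spares you from arguing separately that the repeated root guaranteed by $\mathrm{Res}(\phi,\phi')=0$ is real rather than a complex-conjugate pair.

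One caution: your fallback continuity argument for $\Psi_2(x_\alpha)>0$ is not sufficient as written. Checking the sign at the two endpoints of a compact interval does not by itself rule out an interior zero; you would still need either a nonvanishing argument or the explicit evaluation. Your primary proposal (eliminate powers of $x_\alpha$ using $\phi(x_\alpha)=\phi'(x_\alpha)=0$) is exactly what the paper does, and is the right route.
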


\begin{proof}
Denote $\Psi(x) = 3\Psi_1(x) + 4\Psi_2(x)$. First we show that $\Psi(x) = 0$ has a double real root. One may see the fact from the vanishing of the discriminant. In the following we solve this double root explicitly. A calculation yields
\begin{eqnarray*}
\Psi(x) & = & \frac{11\alpha + 12 \alpha^2 -2 \gamma}{48 \alpha^2} x^4 + \frac{-10\alpha + 8 \alpha^2 + 3\gamma}{8\sqrt{3}\alpha^2}x^3 + \frac{9\alpha - 4\alpha^2 - 9 \gamma}{24\alpha^2} x^2 \\
& & + \frac{6\alpha - 8\alpha^2 + 3\gamma}{8\sqrt{3}\alpha^2} x + \frac{4\alpha - 3}{16\alpha} \\
\Psi'(x) & = & \frac{11\alpha + 12 \alpha^2 - 2\gamma}{12\alpha^2}x^3 + \frac{\sqrt{3}\left(-10\alpha + 8\alpha^2 + 3\gamma\right)}{8\alpha^2} x^2 + \frac{9\alpha - 4\alpha^2 - 9\gamma}{12\alpha^2} x \\
& & + \frac{6\alpha - 8\alpha^2 +3\gamma}{8\sqrt{3}\alpha^2} \\
\Psi''(x) & = & \frac{11\alpha+12\alpha^2 - 2\gamma}{4\alpha^2} x^2 + \frac{\sqrt{3}\left(-10\alpha + 8\alpha^2 + 3\gamma\right)}{4\alpha^2} x + \frac{9\alpha - 4\alpha^2 - 9\gamma}{12\alpha^2}.
\end{eqnarray*}
One can check that the following $x_\alpha$ is a common real root of $\Psi(x) = \Psi'(x) = 0$:
\begin{equation}\label{eqn:xalpha}
x_\alpha = \frac{\sqrt{3}\left(3-4\alpha -4 \gamma\right)}{3+12\alpha}
\end{equation}
and $\Psi''(x) = \frac{8}{3} - \frac{3}{2\alpha} >0$. It follows that $x_\alpha$ is a local minimum of $\Psi(x)$.  

Write
\[
\Psi(x) = \frac{11\alpha + 12\alpha^2 - 2\gamma}{48\alpha^2} (x-x_\alpha)^2 p(x)
\] 
and then we have
\[
p(x) = x^2 - \frac{2\sqrt{3}\left(2-\alpha +\gamma\right)}{4+3\alpha} x + \frac{3\alpha}{5\alpha + 2\gamma}.
\]
The discriminant $\Delta$ of $p(x)$ is given by
\[
\Delta = \frac{36}{12-41\alpha - 20 \gamma} < 0
\]
that implies that $\Psi(x) =  0$ has no other real roots. 

To finish the proof we only need to check that $\Psi_2(x_\alpha) > 0$. An explicit computation shows that 
\[
\Psi_2(x_\alpha) = \frac{(16\alpha - 9)\left(9 - 312 \alpha + 656 \alpha^2 - 48\gamma + 320 \alpha \gamma\right)}{36\alpha(1+4\alpha)^4} >0
\]
as $\alpha \geq \frac 3 4$.
\end{proof}

We will use the sectional curvature of the plane spanned by $A_r= X_1 + r X_2$ and $B_q= E_1 + q F_1$. Let
\begin{eqnarray*}
R_1 = R(X_1, E_1, E_1, X_1) & & R_2 = R(X_1, E_1, F_1, X_1) \\
R_3 = R(X_1, F_1, F_1, X_1) & & R_4 = R(X_2, E_1, E_1, X_2) \\
R_5 = R(X_2, E_1, F_1, X_2) & & R_6 = R(X_2, F_1, F_1, X_2) \\
R_7 = R(X_1, E_1, E_1, X_2) & & R_8 = R(X_1, F_1, E_1, X_2) \\
R_9 = R(X_1, E_1, F_1, X_2) & & R_{10} = R(X_1, F_1, F_1, X_2). 
\end{eqnarray*}
The formulas of $R_i$'s are listed in Appendix A.3. In the following, we group the terms in $R_i$'s into three different parts: one with the factor $\xi$, with the factor $\xi'$, and without the factor $\xi$ or $\xi'$.

\begin{lem}\label{lem:Ri}
The $R_i$'s have the following forms:
\begin{eqnarray*}
R_1 & = & -\frac{\xi}{2\alpha}\left(1+\eta_1\right) + \frac{1}{2} f_1 f_1' \xi' + \frac{1}{8}\left(f_1^2 - f_{12}\right)^2 \\
R_2 & = & \frac{\xi}{2\sqrt{3} \alpha}\left(1+\eta_2 \right) + \frac{1}{2\sqrt{3}}\left(\frac{\gamma}{\alpha} - 1\right)f_1 f_1' \xi' - \frac{1}{8\sqrt{3}}\left(1 + \frac{\gamma}{\alpha}\right)\left(f_1^2 - f_{12}\right)^2 \\
(\alpha - \xi)R_3 & = & \frac{\xi}{2}\left(1+\eta_3 \right) + \frac{5\alpha -2\gamma -3}{6}f_1 f_1' \xi' + \frac{5\alpha + 2\gamma}{24}\left(f_1^2 - f_{12}\right)^2 \\
R_4 & = & \frac{-2+f_2^2(0)}{4\alpha} \xi \left(1+\eta_4\right) + \frac{1}{2} f_2 f_2' \xi' + \frac{1}{8}\left(f_2^2 - f_{12}\right)^2 \\
R_5 & = & \frac{2-f_2^2(0)}{4\sqrt{3} \alpha} \xi \left(1+\eta_5 \right) + \frac{1}{2\sqrt{3}}\left(\frac{\gamma}{\alpha} - 1\right)f_2 f_2' \xi' - \frac{1}{8\sqrt{3}}\left(1 + \frac{\gamma}{\alpha}\right)\left(f_2^2 - f_{12}\right)^2 \\
(\alpha - \xi)R_6 & = & \pset{\frac{1}{2} - \frac{f_2^2(0)}{4} - \frac{5\alpha +2\gamma -3}{24\alpha}f_2^4(0)} \xi \left(1+\eta_6 \right) + \frac{5\alpha -2\gamma -3}{6}f_2 f_2' \xi' \\
& &  + \frac{5\alpha + 2\gamma}{24}\left(f_2^2 - f_{12}\right)^2 \\
R_7 & = & \frac{4-f_2^2(0)}{8\alpha}\xi \left(1+\eta_7\right) + \frac{1}{4}f_{12}' \xi' - \frac{1}{8}\left(f_1^2 - f_{12}\right)\left(f_2^2 - f_{12}\right) \\
R_8 & = & - \frac{4\alpha - (\alpha + \gamma) f_2^2(0)}{8\sqrt{3}\alpha^2}\xi (1+\eta_8 ) - \frac{1}{4\sqrt{3}}\left(1-\frac{\gamma}{\alpha}\right) f_{12}' \xi'  \\
& & + \frac{1}{8\sqrt{3}}\left(1 + \frac{\gamma}{\alpha}\right)\left(f_1^2 - f_{12}\right)\left(f_2^2 - f_{12}\right) \\
R_9 & = & - \frac{4\alpha - (\alpha - \gamma) f_2^2(0)}{8\sqrt{3}\alpha^2}\xi (1+\eta_9 ) - \frac{1}{4\sqrt{3}}\left(1-\frac{\gamma}{\alpha}\right) f_{12}' \xi'  \\
& & + \frac{1}{8\sqrt{3}}\left(1 + \frac{\gamma}{\alpha}\right)\left(f_1^2 - f_{12}\right)\left(f_2^2 - f_{12}\right)  \\
(\alpha - \xi) R_{10} & = & - \frac{4-f_2^2(0)}{8}\xi \left(1+ \eta_{10}\right) + \frac{5\alpha -2\gamma -3}{12}f_{12}'\xi' \\
& & - \frac{5\alpha + 2\gamma}{24}\left(f_1^2 - f_{12}\right)\left(f_2^2 - f_{12}\right)
\end{eqnarray*}
where $\eta_i = \eta_i(t)$ are functions in $t$($i=1,\ldots, 10$), with $\eta_i (t) \To 0$ as $t\To 0^+$.
\end{lem}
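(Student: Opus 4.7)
The plan is to reduce each $R_i$ to a routine, albeit tedious, application of the general cohomogeneity one curvature formula, and then to expand around $t=0$ using the normalizations of Lemma \ref{lem:smoothmetric} and Proposition \ref{prop:parallelJacobi}. For a cohomogeneity one metric $g = dt^2 + g_t$ with $g_t(X^*, Y^*) = Q(P(t)X, Y)$, the Riemann tensor along $c(t)$ splits into three kinds of contributions when evaluated on Killing fields: (i) a purely algebraic piece involving the bracket structure of $\frakg$, which at $t=0$ is computed from the initial values of $P$; (ii) a piece quadratic in the shape operator $\tfrac{1}{2}P^{-1}P'$, which after using $2a_1 a_1' = -\xi'$ furnishes the $\xi'$-terms; and (iii) a piece linear in $P^{-1}P''$, which after writing $a_1^2 = \alpha - \xi$ and expanding $1/a_1^2 = 1/\alpha + \xi/\alpha^2 + O(\xi^2)$ furnishes the $\xi$-terms. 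The factor $(\alpha - \xi)$ appearing on the left-hand side of $R_3$, $R_6$, $R_{10}$ simply clears the $a_1^{-2}$ denominator coming from $P^{-1}$ restricted to $\Span_\Real\{E_i, F_i\}$.

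Concretely, I would first tabulate the Lie brackets $[X_i, X_j]$, $[X_i, E_a]$, $[X_i, F_a]$, $[E_a, F_b]$ and their counterparts with $Y_j$ using the explicit matrix embedding $\frakg_2 \hookrightarrow \so(7)$ given at the start of Section 4, together with \eqref{eqn:X1X2}--\eqref{eqn:Y1Y2}. The intertwining relationship between $\frakm_1$ and $\frakm_2$ forced by Lemma \ref{lem:AdHrep} drastically cuts down the number of independent brackets one needs, and fixes the sign patterns that produce the $(f_j^2 - f_{12})^2$ and $(f_1^2 - f_{12})(f_2^2 - f_{12})$ factors in the algebraic pieces. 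Next I would use Proposition \ref{prop:parallelJacobi} to write $P$ in block-diagonal form, so that the only off-diagonal block (inside $\Span\{X_1, X_2\}$) is given by $f_{12}$; the inverse on $\Span\{E_i, F_i\}$ is
\[
\begin{pmatrix} \beta^2 + 1/a_1^2 & \beta \\ \beta & 1\end{pmatrix},
\]
which is the source of the $1/(\alpha - \xi)$ dependence.

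The coefficients of $\xi$ in each $R_i$ are then read off at $t=0$ using $a_1^2(0) = \alpha$, $a_{12}(0) = -\beta\alpha$, $a_2^2(0) = \beta^2\alpha + 1$, $h_1(0) = 1$, $f_1(0) = f_{12}(0) = 0$, $\gamma = \sqrt{\alpha(4\alpha - 3)}$ (using equation \eqref{eqn:a1sqaurebeta} to eliminate $\beta$ in favour of $\gamma$ wherever it appears), and the remainders are packaged into the error functions $\eta_i(t)$. That $\eta_i(t) \to 0$ as $t \to 0^+$ follows from the continuity of $\xi, \xi', f_1, f_{12}, f_2, f_2'$ and the vanishing conditions in Lemma \ref{lem:smoothmetric}, since each $\eta_i$ is a polynomial expression in these quantities with denominator bounded away from zero near $t=0$.

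The hard part is organizational rather than conceptual: there are ten curvatures, each involving several brackets, and after substituting the explicit matrix entries one must carefully regroup terms to isolate the coefficients of $\xi$ and $\xi'$ separately (rather than, say, of $a_1^2$ and $a_1 a_1'$), and to express them in the normalized form above. In particular, the appearance of the combinations $5\alpha \pm 2\gamma$ and $\alpha \pm \gamma$ in so many entries is what ultimately drives the algebraic identities of Lemma \ref{lem:quarticfunctions}; verifying these coefficients is best done with the aid of a computer algebra system, and the detailed check is deferred to Appendix~A.3.
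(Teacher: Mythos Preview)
Your overall plan matches the paper's: compute each $R_i$ exactly via the cohomogeneity one curvature formula (the paper records the resulting polynomial identities in Appendix~A.3), then regroup the terms into a $\xi$-part, a $\xi'$-part, and a $(\xi,\xi')$-free part, absorbing the higher-order exact terms into the $\eta_i$.

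There is, however, a genuine error in your identification of the three contributions. For $R(X,Y,Z,W)$ with all four arguments in $\frakp$ (tangent to the orbits), the cohomogeneity one formula contains only $P$, $P^{-1}$, and $P'$; there is \emph{no} $P^{-1}P''$ term. Second derivatives of $P$ appear only when one argument is $T=\partial_t$, which is never the case for $R_1,\ldots,R_{10}$. Consequently your item (iii) does not exist, and the $\xi$-terms do not arise the way you describe. They come instead from your item (i): the ``purely algebraic'' piece depends on $P(t)$ itself, and since $a_1^2(t)=\alpha-\xi(t)$ enters $P$ and $P^{-1}$, the bracket/$B_\pm$ terms already produce polynomials in $\xi$, $f_1^2$, $f_2^2$, $f_{12}$. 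Splitting that polynomial at $\xi=0$ yields both the $(f_j^2-f_{12})^2$-type constant piece \emph{and} the leading $\xi$-coefficient simultaneously; there is no separate mechanism. Likewise, no Taylor expansion of $1/a_1^2$ is needed: the Appendix~A.3 formulas are exact, and multiplying $R_3$, $R_6$, $R_{10}$ by $\alpha-\xi=a_1^2$ clears the only genuine denominator. Once you drop the spurious $P''$ step and track the $\xi$-dependence through the algebraic terms, your plan coincides with the paper's.
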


Next we prove Theorem \ref{thm:introobstruction} in the Introduction.

\begin{proof}[Proof of Theorem \ref{thm:introobstruction}]
We argue by contradiction. Assume that $M^{13}_k$ admits a non-negatively curved invariant metric $g$ with $k \geq 3$. The constant $\beta$ in Proposition \ref{prop:parallelJacobi} and thus $\alpha$ in equation (\ref{eqn:alphagamma}) are determined by the metric $g$. Furthermore, from Proposition \ref{prop:a1squarebounds}, we have $\frac{3}{4}\leq \alpha \leq \frac{7}{12} + \frac{1}{6}\sqrt{13}$.
  
First, note that $\xi(t) > 0$ for $t>0$ by a similar argument as in \cite[Section 2]{GVWZ} and the inequality (\ref{eqn:xipxiupperbound}). From Lemma \ref{lem:a1h1pp}, we have $a_1''(t) \leq 0$ for all $t\in [0, L]$, and it follows that $\xi'(t) = - 2 a_1(t) a_1'(t)\geq 0$ for all $t \in [0,L]$ as $a_1'(0) = 0$. From the inequality (\ref{eqn:xipxiupperbound}) we have
\[
0\leq \frac{f_1\xi'}{\xi}\leq \frac{2\sqrt{6}}{3}(1+\eta(t))
\]
for all $t\in (0, L)$. So the limit superior exists, and we denote
\begin{equation}
\ell = \limsup_{t\To 0+} \frac{f_1\xi'}{\xi} \leq \frac{2\sqrt{6}}{3}.
\end{equation}
Next we will derive a lower bound of $\ell$ from the non-negativity of the curvatures of certain $2$-planes, such that the two bounds contradict to each other if $k > 2$.  

Consider the sectional curvature of the plane spanned by $A_r = X_1 + r X_2$ and $B_q = E_1 + q F_1$:
\[
K(A_r, B_q) = \frac{R(A_r, B_q, B_q, A_r)}{\abs{A_r \wedge B_q}^2}.
\]
Note that a necessary condition for $K(A_r, B_q)\geq 0$ for all $r$, is that the following inequality
\[
I_q = \frac{1}{f_2^4(0)}\left(R(X_1, B_q, B_q, X_1) R(X_2, B_q, B_q, X_2) - R(X_1, B_q, B_q, X_2)^2\right) \geq 0
\]
holds for all $q$. Using the $R_i$'s, we have 
\begin{eqnarray*}
R(X_1, B_q, B_q, X_1) & = & R_1 + 2 q R_2 + q^2 R_3 \\
R(X_2, B_q, B_q, X_2) & = & R_4 + 2 q R_5 + q^2 R_6 \\
R(X_1, B_q, B_q, X_2) & = & R_7 + q\left(R_8 + R_9\right) + q^2 R_{10};
\end{eqnarray*}
and thus
\begin{eqnarray*}
f_2^4(0)I_q & = & \left(R_3 R_6 -R_{10}^2\right) q^4 + 2\left( R_2 R_6 + R_3 R_5 - R_8 R_{10}- R_9 R_{10}\right)q^3 \\
& & +\left[-(R_8 + R_9)^2 - 2R_7 R_{10} + 4 R_2 R_5 + R_1 R_6 + R_3 R_4 \right]q^2 \\
& & +2\left(R_2 R_4 + R_1 R_5 - R_7 R_8- R_7 R_9\right) q + \left(R_1 R_4 - R_7^2\right).
\end{eqnarray*}
Write 
\[
I_q = c_4 q^4 + c_3 q^3 + c_2 q^2 + c_1 q + c_0
\]
with
\begin{eqnarray*}
c_0 & = & f_2^{-4}(0)\left(R_1 R_4 - R_7^2\right) \\
c_1 & = & 2f_2^{-4}(0)\left(R_2 R_4 + R_1 R_5 - R_7 R_8 - R_7 R_9\right) \\
c_2 & = & f_2^{-4}(0)\left(-(R_8 + R_9)^2 - 2R_7 R_{10} + 4 R_2 R_5 + R_1 R_6 + R_3 R_4\right) \\
c_3 &= & 2f_2^{-4}(0)\left( R_2 R_6 + R_3 R_5 - R_8 R_{10}- R_9 R_{10}\right) \\
c_4 & = & f_2^{-4}(0)\left(R_3 R_6 - R_{10}^2\right).
\end{eqnarray*}
From the forms of $R_i$'s in Lemma \ref{lem:Ri}, we have 
\begin{eqnarray*}
c_0 & = & - \frac{1}{16 \alpha}(1+ \eta_{11}) \xi + \frac{1}{16}(1+\eta_{12}) f_1 f_1' \xi' \\
c_1 & = & \frac{2\alpha + \gamma}{8 \sqrt{3}\alpha^2} (1+ \eta_{13})\xi - \frac{1}{4\sqrt{3}}(1+\eta_{14}) f_1 f_1' \xi' \\
c_2 & = & - \frac{\alpha + \gamma}{8 \alpha^2} (1+ \eta_{15}) \xi + \frac{9-2\alpha}{48\alpha} (1+\eta_{16}) f_1 f_1' \xi' \\
c_3 &= & \frac{2\alpha - \gamma}{24\sqrt{3} \alpha^2}(1+\eta_{17})\xi + \frac{2\alpha^2 - 3\alpha + \gamma}{8\sqrt{3}\alpha^2} (1+ \eta_{18}) f_1 f_1' \xi' \\
c_4 & = & \frac{5\alpha + 2\gamma}{48\alpha^2} (1+\eta_{19}) \xi + \frac{3\alpha^2 - \alpha - 2\gamma}{48\alpha^2} (1+\eta_{20}) f_1 f_1' \xi'. 
\end{eqnarray*}
Here $\eta_{11}, \ldots \eta_{20}$ are functions in $t$, with $\eta_i(t) \To 0$ as $t\To 0^+$ for $i = 11, \ldots, 20$. One can verify the forms of $c_0, \ldots, c_4$ above in the following two steps:
\begin{enumerate}[(i)]
\item Check the fact that the term without the factor $\xi$ or $\xi'$ in each $c_i$ vanishes. 
\item Calculate the leading term with factor $\xi$ or $\xi'$ in each $c_i$.
\end{enumerate}

Take the sequence $\set{t_n} \subset (0, L)$ with $\lim_{n\To \infty} t_n = 0$ and 
\[
\ell = \lim_{n\To \infty} \frac{f_1(t_n)\xi'(t_n)}{\xi(t_n)}.
\]
Note that the coefficients in $c_i$'s appear in the quartic functions $\Psi_1$ and $\Psi_2$ in Lemma \ref{lem:quarticfunctions}. For any fixed $q$ we take the limit of $\xi^{-1} I_q$ along the sequence $\set{t_n}$ and it follows that 
\begin{equation}\label{eqn:Psi12ell}
0 \leq  \Psi_1(q) + \Psi_2(q) f_1'(0) \ell = \Psi_1(q) + \Psi_2(q) \frac{4}{k\sqrt{6}}\ell.
\end{equation}
From Lemma \ref{lem:quarticfunctions}, there is a real number $q_\alpha$ such that 
\[
\Psi_1(q_\alpha) = - \frac{4}{3}\Psi_2(q_\alpha)\quad \text{and}\quad \Psi_2(q_\alpha) > 0.
\]
Letting $q = q_\alpha$ in the inequality (\ref{eqn:Psi12ell}) yields 
\begin{eqnarray*}
0 & \leq & - \frac{4}{3}\Psi_2(q_\alpha) + \Psi_2(q_\alpha) \frac{4}{k \sqrt 6} \frac{2\sqrt 6}{3} \\
& \leq & \left(\frac{8}{3k} - \frac{4}{3}\right)\Psi_2(q_\alpha)
\end{eqnarray*}
and so we have $k \leq 2$. It contradicts to the assumption that $k \geq 3$, and we finish the proof.
\end{proof}
\begin{rem}\label{rem:shortproof}
There is a relatively shorter proof that works for $k \geq 5$: Instead we consider the sectional curvature of the $2$-plane spanned by $A_r = X_1 + r X_2$ and $B = E_1$, i.e., fix $q=0$. Then $K(A_r, B)\geq 0$ implies that $I_0\geq 0$, i.e.,
\[
c_0 = -\frac{1}{16\alpha} \left(1+\eta_{11}\right)\xi + \frac{1}{16}\left(1+\eta_{12}\right) f_1 f_1' \xi' \geq 0.
\]
It follows that 
\[
\frac{f_1 \xi'}{\xi} \geq \frac{1+\eta_{11}}{1+\eta_{12}}\frac{1}{\alpha f_1'}
\]
when $t>0$ small. Taking the limit $t_n \To 0$ yields
\[
\ell \geq \frac{1}{\alpha} \frac{k \sqrt 6}{4}.
\]
Combine with the inequality (\ref{lem:lupperbound}), and we obtain
\[
\frac{2\sqrt{6}}{3}\geq \ell \geq  \frac{1}{\alpha}\frac{k \sqrt 6}{4}.
\]
From Proposition \ref{prop:a1squarebounds}, we have the following estimate: 
\[
k \leq \frac{8}{3} \alpha \leq \frac{8}{3}\left(\frac{7}{12} + \frac{\sqrt{13}}{6} \right) \approx 3.16.
\]
However this short proof does not rule out the case $k = 3$. 
\end{rem}

\medskip{}
\appendix

\section{The computations of Riemann curvature tensors}

In this section we collect the detailed computations of Riemann curvature tensors which are used in Section 5 and 6: Proposition \ref{prop:a1squarebounds}, Lemmas \ref{lem:lupperbound} and \ref{lem:Ri}. The formulas of Riemann curvature tensors on a cohomogeneity one manifold have been derived in \cite{GroveZillerposRicci}. Write $R(X,Y,Z,W) = g(R(X,Y)Z, W)$, and the convention of the sectional curvature is given by 
\[
K(X,Y) = \frac{R(X, Y, Y,X)}{\abs{X\wedge Y}^2}
\]
for a $2$-plane spanned by $X$ and $Y$. Recall that $Q$ is a fixed bi-invariant inner product on $\frakg = \so(2)+\frakg_2$, and $\frakp =\frakh^{\perp}$ where $\frakh$ is the Lie algebra of the principal isotropy subgroup $\sfH$. The invariant metric is $g = dt^2 + g_t$, and
\[
g_t (X^*, Y^*) = Q(P X, Y)
\]
where $X^*$ and $Y^*$ are Killing vector field generated by $X, Y \in \frakp$ along the normal geodesic $c(t)$, and $P = P(t) : \frakp \To \frakp$ is a family of positive definite $\Ad_{\sfH}$-invariant endomorphisms for $t \in (0, L)$. In terms of the $Q$-orthonormal basis 
\[\set{X_1, X_2, Y_1, Y_2, E_1, \ldots, E_4, F_1, \ldots, F_4}
\] 
we have
\begin{eqnarray*}
P X_1 & = & f_1^2(t) X_1 + f_{12}(t) X_2 \\
P X_2 & = & f_{12}(t) X_1 + f_2^2(t) X_2 \\
P Y_1 & = & h_1^2(t) Y_1 \\
P Y_2 & = & Y_2 \\
P E_i & = & a_1^2(t) E_i - \beta a_1^2(t) F_i \\
P F_i & = & -\beta a_1^2(t) E_i + (\beta^2 a_1^2(t) + 1) F_i
\end{eqnarray*}
with $1\leq i \leq 4$. The following two bilinear maps are defined in \cite{Puettmann}:
\begin{equation}
B_{\pm} = \frac{1}{2}\left([X, P Y] \mp [P X, Y]\right).
\end{equation}
Here $B_+$ is symmetric with $B_+(X, Y)\in \frakp$ for any $X, Y \in \frakp$, and $B_-$ is skew-symmetric. The formulas of Riemann curvature tensors in terms of $Q$, $P_t$ and $B_{\pm}$ are given in Proposition 1.9 and Corollary 1.10 in \cite{GroveZillerposRicci}. The following special case of formula 1.9(a) in \cite{GroveZillerposRicci} is also useful. For any $X, Y, Z \in \frakp$ we have 
\begin{eqnarray*}
R(X,Y,Z,X) & = & \frac 12 Q\pset{\Bm(X, Y), [X,Z]} + \frac 1 2 Q\pset{[X, Y], \Bm(X, Z)} \\
& & -\frac 12 Q\pset{P[X, Y]_{\frakp}, [X, Z]_\frakp} - \frac 1 4 Q\pset{P[X,Z]_{\frakp}, [X, Y]_\frakp} \\
& & + Q\pset{\Bp(X,Z), P^{-1} \Bp(X, Y)} - Q\pset{\Bp(X,X), P^{-1} \Bp(Y, Z)} \\
& & + \frac 1 4 Q\pset{P'(t) X, Z} Q\pset{P'(t) X, Y} - \frac 1 4 Q\pset{P'(t) X, X} Q\pset{P'(t) Y, Z}.
\end{eqnarray*}
Recall the constants 
\[
\alpha =a_1^2(0) = \frac{\sqrt 3}{\sqrt 3\left(1- \beta^2\right) + 2\beta}
\]
and $\gamma = \sqrt{\alpha(4\alpha - 3)}$ in equations (\ref{eqn:a1sqaurebeta}) and (\ref{eqn:alphagamma}).

\subsection{The Riemann curvature tensors in Proposition \ref{prop:a1squarebounds}}
First we have 
\[
[Y_1, E_1] = \sqrt{3}E_2 \quad\text{and}\quad [Y_1, F_1] = - \frac{1}{\sqrt 3} F_2.
\]
Then the bilinear maps are given by 
\begin{eqnarray*}
2\Bm(Y_1, E_1) & = & [Y_1, P(0) E_1] + [P(0) Y_1, E_1] \\
& = & [Y_1, \alpha E_1 -\alpha \beta F_1] + [Y_1, E_1] \\
& = & \sqrt 3 (\alpha + 1)E_2 + \frac{\alpha \beta}{\sqrt 3} F_2 \\
2\Bp(Y_1, E_1) & = & [Y_1, P(0) E_1] - [P(0) Y_1, E_1] \\
& = & \sqrt 3 (\alpha -1)E_2 + \frac{\alpha \beta}{\sqrt 3} F_2
\end{eqnarray*}
and 
\begin{eqnarray*}
2\Bm(Y_1, F_1) & = & [Y_1, P(0) F_1] + [P(0)Y_1, F_1] \\
& = & [Y_1, -\alpha \beta E_1 + (\alpha \beta^2 +1)F_1] +[Y_1, F_1] \\
& = & -\sqrt 3 \alpha \beta E_2 -\frac{\alpha \beta^2 + 2}{\sqrt 3} F_2 \\
2\Bp(Y_1, F_1) & = & [Y_1, P(0) F_1] - [P(0) Y_1, F_1] \\
& = & -\sqrt 3 \alpha \beta E_2 -\frac{\alpha \beta^2}{\sqrt 3} F_2 .
\end{eqnarray*}
It follows that 
\begin{eqnarray*}
P^{-1}(0) \Bp(Y_1, E_1) & = & \frac{\sqrt 3(\alpha -1)}{2} P^{-1}(0) E_2 + \frac{\alpha \beta}{2\sqrt 3} P^{-1}(0) F_2 \\
& = & \frac{\sqrt 3(\alpha -1)}{2} \pset{\pset{\beta^2 + \frac{1}{\alpha}}E_2 + \beta F_2} + \frac{\alpha \beta}{2\sqrt 3}\pset{\beta E_2 + F_2} \\
& = & \frac{\sqrt{3}\beta \pset{-1+\beta^2}}{\sqrt 3(1-\beta^2)+2\beta} E_2 + \frac{(-3 + 4\alpha)\beta}{2\sqrt 3} F_2,
\end{eqnarray*}
and 
\begin{eqnarray*}
P^{-1}(0) \Bp(Y_1, F_1) & = & -\frac{\sqrt 3 \alpha \beta}{2} P^{-1}(0) E_2 - \frac{\alpha \beta^2}{2\sqrt 3} P^{-1}(0) F_2 \\
& = & -\frac{\sqrt 3\alpha \beta}{2} \pset{\pset{\beta^2 + \frac{1}{\alpha}}E_2 + \beta F_2} - \frac{\alpha \beta^2}{2\sqrt 3}\pset{\beta E_2 + F_2} \\ 
& = & \frac{\beta (\beta + \sqrt 3)^2}{-2\sqrt 3(1-\beta^2) -4 \beta}E_2 - \frac{2\alpha \beta^2}{\sqrt 3}F_2.
\end{eqnarray*}

Note that $\Bp(Y_1, Y_1) = [Y_1, P(0) Y_1] = 0$. So one can compute the three Riemann curvature tensors as follows:
\begin{eqnarray*}
R(Y_1, E_1, E_1, Y_1) & = & \frac{3(1+\alpha)}{2}-\frac 3 4 Q \pset{\sqrt 3 \alpha E_2 - \sqrt 3 \alpha \beta F_2,\sqrt 3 E_2} \\
& & + Q\pset{\frac{\sqrt 3(\alpha -1)}{2}E_2 + \frac{\alpha \beta}{2\sqrt 3}F_2, \frac{\sqrt{3}\beta \pset{-1+\beta^2}}{\sqrt 3(1-\beta^2)+2\beta} E_2 + \frac{(-3 + 4\alpha)\beta}{2\sqrt 3} F_2} \\
& = & \frac{3(1+\alpha)}{2}-\frac{9\alpha}{4} + \frac{\sqrt 3(\alpha -1)}{2}\frac{\sqrt{3}\beta \pset{-1+\beta^2}}{\sqrt 3(1-\beta^2)+2\beta} + \frac{\alpha \beta}{2\sqrt 3}  \frac{(-3 + 4\alpha)\beta}{2\sqrt 3} \\
& = & \frac{6\sqrt{3} \beta^5 + 9 \beta^4 - 32\sqrt{3} \beta^3 + 10 \beta^2 + 18 \sqrt{3}\beta + 9}{4\left(\sqrt 3 \beta^2 - 2\beta - \sqrt 3\right)^2},
\end{eqnarray*}
\begin{eqnarray*}
R(Y_1, F_1, F_1, Y_1) & = & \frac{\alpha \beta^2 +2}{6} - \frac{3}{4}\cdot\frac{1}{3}Q\pset{P(0)F_2, F_2} \\
& & + Q\pset{ -\frac{\sqrt 3 \alpha \beta}{2} E_2 -\frac{\alpha \beta^2}{2\sqrt 3} F_2,  \frac{\beta (\beta + \sqrt 3)^2}{-2\sqrt 3(1-\beta^2) -4 \beta}E_2 - \frac{2\alpha \beta^2}{\sqrt 3}F_2} \\
& = & \frac{\alpha \beta^2 +2}{6} -\frac{\alpha\beta^2 + 1}{4} + \frac{\sqrt 3 \alpha \beta^2 (\beta + \sqrt 3)^2}{4\sqrt 3(1-\beta^2) + 8 \beta} + \frac{\alpha^2 \beta^4}{3} \\
& = & \frac{27 \beta^4 + 12 \sqrt{3}\beta^3 + 22\beta^2 + 4\sqrt{3}\beta + 3}{12\left(\sqrt 3 \beta^2 - 2\beta - \sqrt 3\right)^2},
\end{eqnarray*}
and 
\begin{eqnarray*}
R(Y_1, E_1, F_1,Y_1) & = & \frac 12 Q\pset{\Bm(Y_1, E_1), [Y_1,F_1]} + \frac 1 2 Q\pset{[Y_1, E_1], \Bm(Y_1, F_1)} \\
& & -\frac 12 Q\pset{P(0)[Y_1, E_1]_{\frakp}, [Y_1, F_1]_\frakp} - \frac 1 4 Q\pset{P(0)[Y_1,F_1]_{\frakp}, [Y_1, E_1]_\frakp} \\
& & + Q\pset{\Bp(Y_1, F_1), P^{-1}(0) \Bp(Y_1, E_1)} \\
& & - Q\pset{\Bp(Y_1,Y_1), P^{-1}(0) \Bp(E_1, F_1)} \\
& = & \frac 1 4 Q\pset{\sqrt 3 (\alpha + 1)E_2 + \frac{\alpha \beta}{\sqrt 3} F_2, - \frac{1}{\sqrt 3} F_2} \\
& & + \frac 1 4 Q\pset{\sqrt 3 E_2, -\sqrt 3 \alpha \beta E_2 -\frac{\alpha \beta^2 + 2}{\sqrt 3} F_2} \\
& & - \frac 1 2 Q\pset{\sqrt 3 P(0) E_2,-\frac{1}{\sqrt 3}F_2} - \frac1 4 Q\pset{-\frac{1}{\sqrt 3}P(0) F_2, \sqrt 3 E_2} \\
& & + \frac 1 2 Q\pset{ -\sqrt 3 \alpha \beta E_2 -\frac{\alpha \beta^2}{\sqrt 3} F_2,  \frac{\sqrt{3}\beta \pset{-1+\beta^2}}{\sqrt 3(1-\beta^2)+2\beta} E_2 + \frac{(-3 + 4\alpha)\beta}{2\sqrt 3} F_2} \\
& = & - \frac{1}{12}\alpha \beta - \frac 3 2 \alpha \beta + \frac 1 2 \pset{\frac{-3\alpha \beta^2(-1+\beta^2)}{\sqrt 3(1-\beta^2)+ 2\beta} - \frac{(-3+4\alpha)\alpha \beta^3}{6}} \\
& = & - \frac{\beta\left(9 \beta^4 + 12\sqrt{3}\beta^3 - 54 \beta^2 + 20\sqrt{3}\beta + 57\right)}{12\left(\sqrt 3 \beta^2 - 2\beta - \sqrt 3\right)^2}.
\end{eqnarray*}

\subsection{The curvature formula in Lemma \ref{lem:lupperbound}}
Recall that $X = E_1 - \sqrt 3 F_1$ and $Y = \sqrt 3 E_4 + F_4$. First note that $[X, Y] = 0$. The images under $P=P(t)$ are given by
\begin{eqnarray*}
P X & = & P E_1 - \sqrt 3 P F_1 \\
& = & a_1^2 E_1 - \beta a_1^2 F_1 -\sqrt 3 \pset{-\beta a_1^2 E_1 + (\beta^2 a_1^2 +1)F_1} \\
& = & a_1^2 (1+\sqrt 3 \beta) E_1 - \pset{\beta a_1^2 + \sqrt 3 \beta^2 a_1^2 + \sqrt 3}F_1 \\
P Y & = &\sqrt 3 P E_4 + P F_4 \\
& = & \sqrt 3\pset{a_1^2 E_4 - \beta a_1^2 F_4} + \pset{-\beta a_1^2 E_4 + (\beta^2 a_1^2 + 1)F_4} \\
& = & a_1^2(\sqrt 3-\beta)E_4 + \pset{\beta^2 a_1^2 - \sqrt 3 \beta a_1^2 + 1}F_4.
\end{eqnarray*}
Note that $[E_1, F_1] = [E_4, F_4] = [E_1, E_4]_\frakp = 0$, and 
\[
[E_1, F_4] = -\frac{1}{\sqrt 2}(X_1 - X_2), \quad [E_4, F_1] = \frac{1}{\sqrt 2}(X_1 - X_2), \quad [F_1, F_4]_\frakp = \frac{\sqrt{6}}{3}\pset{X_1 - X_2}.
\]
It follows that the bilinear maps are $\Bp(X,X) = \Bp(Y, Y) = 0$, and 
\begin{eqnarray*}
\Bp(X, Y) & = & [X, PY] - [PX, Y] \\
& = & \frac{\sqrt 2 \pset{-3 + \pset{-3\beta^2 + 2\sqrt 3 \beta + 3}a_1^2}}{3}\pset{X_1 - X_2}.
\end{eqnarray*}
So there are only two non-vanishing terms in $R(X, Y, Y, X)$ that yield 
\begin{eqnarray*}
R(X, Y, Y, X) & = & Q\pset{\Bp(X, Y), P^{-1}\Bp(X, Y)} - \frac{1}{4}Q\pset{P'(t) X, X} Q\pset{P'(t) Y, Y} \\
& = & \frac{2 \pset{-3 + \pset{-3\beta^2 + 2\sqrt 3 \beta + 3}a_1^2}^2}{9}\frac{f_1^2 + f_2^2 + 2f_{12}}{f_1^2 f_2^2 - f_{12}^2}  \\
& & + \pset{-3+2 \sqrt{3} \beta -\beta ^2} \pset{1+2 \sqrt{3} \beta +3 \beta ^2} a_1^2 \pset{a_1'}^2.
\end{eqnarray*}
After the substitutions $\xi = \alpha - a_1^2$ and $\beta$ in terms of $\alpha$, we have
\[
R(X,Y,Y,X) = \frac{2}{\alpha^2} \frac{f_1^2 + f_2^2 + 2f_{12}}{f_1^2 f_2^2 - f_{12}^2}\xi^2 - \frac{3}{4\alpha^2}\pset{\xi'}^2
\]
that gives the formula in equation (\ref{eqn:KXYxi}).

\subsection{The Riemann curvature tensors $R_1,\ldots, R_{10}$ in Lemma \ref{lem:Ri}}

Similar to the previous sections A.1 and A.2,  a straightforward but tedious computation shows the following formulas, which are used to derive Lemma \ref{lem:Ri}.

\begin{prop}
We have
\begin{eqnarray*}
R_1 & = & -\frac{\xi}{2\alpha} + \frac{\xi^2}{8\alpha^2} + \frac{\xi f_1^2}{4\alpha} + \frac{1}{8}f_1^4 - \frac{\xi f_{12}}{4\alpha} - \frac{1}{4}f_1^2 f_{12} + \frac{1}{8} f_{12}^2 + \frac{1}{2} f_1 f_1' \xi' \\
\sqrt 3 R_2 & = &  \frac{\xi}{2 \alpha} - \frac{\xi^2}{8 \alpha^2} + \frac{\gamma}{8\alpha^3}\xi^2 - \frac{f_1^2}{4\alpha}\xi - \frac{f_1^4}{8} - \frac{\gamma f_1^4}{8\alpha} + \frac{f_{12}}{4\alpha}\xi + \frac{f_1^2 f_{12}}{4} + \frac{\gamma f_1^2 f_{12}}{4\alpha} \\
& & - \frac{f_{12}^2}{8} - \frac{\gamma f_{12}^2}{8\alpha} - \frac{f_1}{2}f_1' \xi' + \frac{\gamma f_1}{2\alpha}f_1' \xi' \\
(\alpha -\xi) R_3 & = & \frac{\xi}{2} - \frac{7}{24\alpha}\xi^2 -\frac{\gamma}{12\alpha^2}\xi^2 + \frac{1}{8\alpha^3}\xi^3 - \frac{5}{24\alpha^2} \xi^3 + \frac{\gamma}{12\alpha^3}\xi^3 - \frac{f_1^2}{4}\xi - \frac{f_1^2}{4\alpha^2}\xi^2 \\
& & + \frac{f_1^2}{4\alpha}\xi^2 + \frac{5\alpha}{24}f_1^4 + \frac{\gamma}{12}f_1^4 - \frac{5f_1^4}{24}\xi + \frac{f_1^4}{8\alpha}\xi - \frac{\gamma f_1^4}{12\alpha}\xi + \frac{f_{12}}{4}\xi + \frac{f_{12}}{4\alpha^2} \xi^2 \\ 
& & - \frac{f_{12}}{4\alpha}\xi^2 - \frac{5\alpha f_1^2 f_{12}}{12} - \frac{\gamma f_1^2 f_{12}}{6} + \frac{5 f_1^2 f_{12}}{12}\xi - \frac{f_1^2 f_{12}}{4\alpha}\xi + \frac{\gamma f_1^2 f_{12}}{6\alpha} \xi \\
& & + \frac{5\alpha f_{12}^2}{24} + \frac{\gamma f_{12}^2}{12} - \frac{5f_{12}^2}{24}\xi + \frac{f_{12}^2}{8\alpha}\xi - \frac{\gamma f_{12}^2}{12\alpha} \xi - \frac{f_1}{2} f_1' \xi'  + \frac{5\alpha f_1}{6}f_1' \xi' \\
& & - \frac{\gamma f_1}{3}f_1' \xi' - \frac{5}{6}f_1 \xi f_1' \xi'  + \frac{1}{2\alpha} f_1 \xi f_1' \xi' + \frac{\gamma}{3\alpha} f_1 \xi f_1' \xi' .
\end{eqnarray*}
$R_4$, $R_5$ and $R_6$ can be obtained from $R_1$, $R_2$ and $R_3$ respectively by switching $f_1$ and $f_2$. 
\begin{eqnarray*}
R_7 & = & \frac{1}{2\alpha} \xi - \frac{1}{8\alpha^2}\xi^2 - \frac{f_1^2}{8\alpha}\xi - \frac{f_2^2}{8 \alpha} \xi - \frac{1}{8}f_1^2 f_2^2 + \frac{f_{12}}{4\alpha} \xi + \frac{1}{8}f_1^2 f_{12} + \frac{1}{8}f_2^2 f_{12} - \frac{1}{8} f_{12}^2 + \frac{1}{4}f_{12}' \xi'  \\
\sqrt{3} R_8 & = & - \frac{1}{2\alpha}\xi + \frac{1}{8\alpha^2} \xi^2 - \frac{\gamma}{8\alpha^3}\xi^2 + \frac{f_1^2}{8\alpha} \xi - \frac{\gamma f_1^2}{8\alpha^2}\xi + \frac{f_2^2}{8\alpha}\xi + \frac{\gamma f_2^2}{8\alpha^2}\xi + \frac{f_1^2 f_2^2}{8} + \frac{\gamma f_1^2 f_2^2}{8\alpha} \\
& & - \frac{f_{12}}{4\alpha}\xi - \frac{f_1^2 f_{12}}{8} - \frac{\gamma f_1^2 f_{12}}{8\alpha} - \frac{f_2^2 f_{12}}{8} - \frac{\gamma f_2^2 f_{12}}{8\alpha} + \frac{f_{12}^2}{8} + \frac{\gamma f_{12}^2}{8\alpha} - \frac{1}{4}f_{12}' \xi' + \frac{\gamma}{4\alpha} f_{12}' \xi' \\
\sqrt{3}R_9 & = & \sqrt{3}R_8 + \frac{\gamma f_1^2}{4\alpha^2} \xi - \frac{\gamma f_2^2}{4\alpha^2}\xi 
\end{eqnarray*}
and 
\begin{eqnarray*}
(\alpha - \xi) R_{10} & = & - \frac{1}{2}\xi + \frac{7}{24\alpha}\xi^2 + \frac{\gamma}{12\alpha^2}\xi^2 - \frac{1}{8\alpha^3}\xi^3 + \frac{5}{24\alpha^2} \xi^3 - \frac{\gamma}{12 \alpha^3}\xi^3 + \frac{f_1^2}{8}\xi + \frac{f_1^2}{8\alpha^2}\xi^2 \\
& & - \frac{f_1^2}{8\alpha} \xi^2 + \frac{f_2^2}{8}\xi + \frac{f_2^2}{8\alpha^2} \xi^2 - \frac{f_2^2}{8\alpha} \xi^2 - \frac{5\alpha f_1^2 f_2^2}{24} - \frac{\gamma f_1^2 f_2^2}{12} + \frac{5 f_1^2 f_2^2}{24}\xi - \frac{f_1^2 f_2^2}{8\alpha} \xi \\
& & + \frac{\gamma f_1^2 f_2^2}{12\alpha}\xi - \frac{f_{12}}{4}\xi - \frac{f_{12}}{4\alpha^2}\xi^2 + \frac{f_{12}}{4\alpha}\xi^2 + \frac{5\alpha f_1^2 f_{12}}{24} + \frac{\gamma f_1^2 f_{12}}{12} - \frac{5f_1^2 f_{12}}{24}\xi \\
& & + \frac{f_1^2 f_{12}}{8\alpha}\xi - \frac{\gamma f_1^2 f_{12}}{12\alpha}\xi + \frac{5\alpha f_2^2 f_{12}}{24} + \frac{\gamma f_2^2 f_{12}}{12} - \frac{5f_2^2 f_{12}}{24}\xi + \frac{f_2^2 f_{12}}{8\alpha}\xi \\
& & - \frac{\gamma f_2^2 f_{12}}{12\alpha} \xi -\frac{5\alpha f_{12}^2}{24} - \frac{\gamma f_{12}^2}{12} + \frac{5f_{12}^2}{24}\xi - \frac{f_{12}^2}{8\alpha}\xi + \frac{\gamma f_{12}^2}{12\alpha} \xi \\
& & - \frac{1}{4}f_{12}' \xi' + \frac{5\alpha}{12} f_{12}' \xi' - \frac{\gamma}{6}f_{12}' \xi' - \frac{5}{12} \xi f_{12}' \xi' + \frac{1}{4\alpha} \xi f_{12}' \xi' + \frac{\gamma}{6\alpha} \xi f_{12}' \xi'.
\end{eqnarray*}
\end{prop}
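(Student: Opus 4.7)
The plan is to apply the Grove--Ziller curvature formula for cohomogeneity one manifolds stated at the beginning of the appendix, with inputs $Q$, $P(t)$, and the bilinear maps $\Bp$, $\Bm$, to each of the ten triples of vectors in $\set{X_1, X_2, E_1, F_1}$ required to produce $R_1,\dots,R_{10}$.

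First, I would collect the Lie brackets among the relevant vectors in $\frakp$. Since $E_{12} \in \so(2)$ commutes with every element of $\frakg_2$, the brackets $[X_i, E_1]$ and $[X_i, F_1]$ reduce to $\pm\frac{1}{\sqrt 6}[U_0, E_1]$ and $\pm\frac{1}{\sqrt 6}[U_0, F_1]$, which one reads off from the matrix form of $U_0$ together with the matrices $E_j, F_j$ displayed in Section 4. The brackets $[X_1, X_2]$ and $[E_1, F_1]$, along with the $\frakh$-$\frakp$ decomposition of the latter, are handled the same way. The endomorphism $P(t)$ and its inverse are block-diagonal on $\Span_\Real\set{X_1, X_2}$ and on $\Span_\Real\set{E_1, F_1}$; the first block has determinant $f_1^2 f_2^2 - f_{12}^2$, and the second has determinant $a_1^2(t) = \alpha - \xi$, which explains the factor $(\alpha - \xi)$ multiplying $R_3$, $R_6$, and $R_{10}$ in the statement.

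Second, using these brackets I would compute the bilinear maps $\Bp(X,Y) = \frac{1}{2}([X, PY] - [PX, Y])$ and $\Bm(X,Y) = \frac{1}{2}([X, PY] + [PX, Y])$ for the relevant pairs, substitute into the master curvature formula, and read off the derivative contribution $\frac{1}{4}Q(P'X, Z)Q(P'X, Y) - \frac{1}{4}Q(P'X, X)Q(P'Y, Z)$ using $\frac{d}{dt}a_1^2 = -\xi'$, $\frac{d}{dt} a_{12} = \beta \xi'$, and $\frac{d}{dt}(\beta^2 a_1^2 + 1) = -\beta^2 \xi'$. Each $R_i$ becomes a rational expression in $f_1^2, f_2^2, f_{12}, \alpha, \beta, \xi$ and their derivatives. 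The compact form in the proposition is then obtained by repeatedly invoking the quadratic relation $\sqrt 3\,\alpha(1-\beta^2) + 2\alpha\beta = \sqrt 3$ from equation (\ref{eqn:a1sqaurebeta}) to reduce higher powers of $\beta$, and by introducing $\gamma = \sqrt{\alpha(4\alpha - 3)}$ to track the root of that quadratic. Because swapping $X_1 \leftrightarrow X_2$ exchanges $f_1 \leftrightarrow f_2$, fixes $f_{12}$, and alters $[X_i, \cdot]$ only by a sign that is squared in every contribution to $R_i$, the formulas for $R_4, R_5, R_6$ follow from those for $R_1, R_2, R_3$ by the substitution $f_1 \leftrightarrow f_2$, cutting the independent work roughly in half.

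The main obstacle is the sheer volume of algebra: each $R_i$ receives contributions from seven summands in the master formula, several involve $P^{-1}$ applied to $\Bp$-outputs, and the cross terms $R_7,\dots,R_{10}$ require careful sign bookkeeping. I would carry out the simplification symbolically and then verify the implicit three-part decomposition (terms independent of $\xi,\xi'$; terms linear in $\xi$ or $\xi'$; and higher-order remainders) by evaluating at $t=0$: using $f_1(0) = f_{12}(0) = 0$, $\xi(0) = 0$, $h_1(0) = 1$, and $a_{12}(0) = \frac{\sqrt 3}{2}(a_1^2(0) - a_2^2(0))$, one checks that the $\xi,\xi'$-free constant term of each $R_i$ reduces to the expected expression such as $\frac{1}{8}(f_i^2 - f_{12})^2\big|_{t=0}$, in line with the asymptotic form extracted in Lemma \ref{lem:Ri} and used in the proof of Theorem \ref{thm:introobstruction}.
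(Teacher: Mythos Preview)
Your plan is correct and matches the paper's approach: the paper states only that the formulas follow from ``a straightforward but tedious computation'' using the Grove--Ziller curvature formula and the bilinear maps $B_\pm$, exactly as in the worked examples of A.1 and A.2, which is precisely the strategy you outline (compute brackets, evaluate $B_\pm$, apply $P$ and $P^{-1}$, substitute $\xi = \alpha - a_1^2$ and the relation between $\beta$ and $\gamma$). Your observation that the $X_1\leftrightarrow X_2$ symmetry yields $R_4,R_5,R_6$ from $R_1,R_2,R_3$, and your proposed sanity check at $t=0$ against Lemma~\ref{lem:Ri}, are both sound and consistent with what the paper does.
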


\bigskip


\end{document}